\setlist[enumerate]{leftmargin=1.5em}
\setlist[itemize]{leftmargin=1.5em}
\definecolor{green}{rgb}{0,0.5,0} 
\newtheorem{thm}{Theorem}[section]
\newtheorem{cor}[thm]{Corollary}
\newtheorem{lem}[thm]{Lemma}
\newtheorem{prop}[thm]{Proposition}
\newtheorem{defn}[thm]{Definition}
\newtheorem{con}{Conjecture}
\newtheorem{quest}{Question}
\theoremstyle{definition}
\theoremstyle{remark}
\newtheorem{rmk}[thm]{Remark}
\numberwithin{equation}{section}
\newcommand{\nrm}[1]{\Vert#1\Vert}
\newcommand{\set}[1]{\{#1\}}
\newcommand{\tld}[1]{\widetilde{#1}}
\newcommand{\nnrm}[1]{{\vert\kern-0.25ex\vert\kern-0.25ex\vert #1 
		\vert\kern-0.25ex\vert\kern-0.25ex\vert}}
\newcommand{\supp}{{\mathrm{supp}}\,}
\newcommand{\lap}{\Delta}
\newcommand{\rd}{\partial}
\newcommand{\nb}{\nabla}
\newcommand{\imp}{\Rightarrow}
\newcommand{\ift}{\infty}
\newcommand{\alp}{\alpha}
\newcommand{\gmm}{\gamma}
\newcommand{\Gmm}{\Gamma}
\newcommand{\dlt}{\delta}
\newcommand{\eps}{\epsilon}
\newcommand{\lmb}{\lambda}
\newcommand{\tht}{\theta}
\newcommand{\omg}{\omega}
\newcommand{\Omg}{\Omega}
\newcommand{\bfu}{{\bf u}}
\newcommand{\bfx}{{\bf x}}
\newcommand{\bfy}{{\bf y}}
\newcommand{\bfz}{{\bf z}}
\newcommand{\bfU}{{\bf U}}
\newcommand{\bbR}{\mathbb R}
\newcommand{\calB}{\mathcal B}
\newcommand{\calE}{\mathcal E}
\newcommand{\calG}{\mathcal G}
\newcommand{\weakto}{\rightharpoonup}
\newcommand{\f}[2]{\frac{#1}{#2}}       
\newcommand{\ii}[2]{\int_{#1}^{#2}}     
\newcommand{\q}{\mbox{ }}       
\newcommand{\qd}{\quad }
\newcommand{\qqd}{\qquad }
\newcommand{\bfone}{\mathbf{1}}
\begin{document}
	
	\bibliographystyle{plain}
	
	\title{	\Large{On existence of Sadovskii vortex patch:} \\ 
		\large{A touching pair of symmetric counter-rotating uniform {vortices}} } 
		
	\author{Kyudong Choi}
	\address{Department of Mathematical Sciences, Ulsan National Institute of Science and Technology, 50 UNIST-gil, Eonyang-eup, Ulju-gun, Ulsan 44919, Republic of Korea.}
	\email{kchoi@unist.ac.kr}
	
	\author{In-Jee Jeong}
	\address{Department of Mathematical Sciences and RIM, Seoul National University, 1 Gwanak-ro, Gwanak-gu, Seoul 08826, and School of Mathematics, Korea Institute for Advanced Study, Republic of Korea.}
	\email{injee$ \_ $j@snu.ac.kr}
	
	\author{Young-Jin Sim}
	\address{Department of Mathematical Sciences, Ulsan National Institute of Science and Technology, 50 UNIST-gil, Eonyang-eup, Ulju-gun, Ulsan 44919, Republic of Korea. }
	\email{yj.sim@unist.ac.kr}
	
	\date\today
	\maketitle
 
	\begin{abstract}
		The Sadovskii vortex patch is a traveling wave for the two-dimensional incompressible Euler equations consisting of an odd symmetric pair of vortex patches touching the symmetry axis. Its existence was first suggested by numerical computations of Sadovskii in [J. Appl. Math. Mech., 1971], and has gained significant interest due to its relevance in inviscid limit of planar flows via Prandtl--Batchelor theory and as the asymptotic state for vortex ring dynamics. In this work, we  prove the existence of a Sadovskii vortex patch, by solving the energy maximization problem under the exact impulse condition and an upper bound on the circulation.  
	\end{abstract}
 

\tableofcontents

\newpage

\section{Introduction}\label{sec_introduction}\q
	
	We consider the Cauchy problem for the 2D incompressible Euler equations in vorticity form:
	
 \begin{equation}\label{eq_Euler eq.}
		\left\{
		\begin{aligned}
			&\qd\rd_{t}\tht+\bfu\cdot\nb\tht=0,\\
         &\qd\bfu=k*\tht,\\
			&\qd\tht|_{t=0}=\tht_{0},
		\end{aligned}
		\right.
\end{equation}
	where $\tht:[0,\ift)\times\bbR^2\to\bbR$. {Based on the Biot--Savart law,} the kernel $k$ is given as $$k(\bfx)=(2\pi)^{-1}(-x_2,x_1)/|\bfx|^2,$$ implying that $\bfu=(u^1,u^2)=\nb^\perp\psi\q$ where $ \q\nb^\perp = (\rd_{x_2}, -\rd_{x_1})$ with the stream function  $\psi=\psi[\tht]$ defined by $$\psi:=(-\lap)_{\bbR^2}^{-1}\tht=-\Big(\f{1}{2\pi}\log|\cdot|\Big)*\tht.$$ 
 In this paper, we are concerned with traveling waves of \eqref{eq_Euler eq.} with the following properties: The function $\tht:[0,\ift)\times\bbR^2\to\bbR$ given by 
$$\tht(t,\bfx)=\Big(\bfone_{A}-\bfone_{A_-}\Big)(\bfx-(W,0)t)$$ solves \eqref{eq_Euler eq.} for some constant $W>0$ and for a bounded open set $A\subset\set{\q(x_1,x_2)\in\bbR^2:x_2>0\q}$ satisfying that,  for some $R>0$, we have $$\set{\bfx\in\bbR^2:|\bfx|<R}\subset \overline{A\cup A_-}$$ where $A_-$ is defined as $A_-:=\set{\q(x_1,x_2)\in\bbR^2:(x_1,-x_2)\in A\q}.$ In other words, we deal with vortex patch solutions whose support in the upper half plane \textit{touches the horizontal axes} and travels with the constant velocity $(W,0)$ without change in its profile. Hereafter, we will refer to such a solution as a \textit{Sadovskii vortex patch} in honor of Sadovskii's contribution \cite{SADOVSKII.1971} in 1971. While there has been extensive research on such \textit{touching} vortex patches (including \cite{Chernyshenko.1988, Moor.Saffman.Tanveer.1988, Overman.1986, Saffman.Szeto.1980, Saffman.Tanveer.1982, Yang.Kubota.1994, Pierrehumbert.1980, Smith.1986}), most of it has focused on their properties rather than proving their existence. We shall defer the detailed historical discussion to Subsection \ref{subsec_sadovskii_important} below. 

Our main result in this paper gives the existence of a Sadovskii vortex patch as a bounded open  set in the upper half plane $\bbR^2_+:=\set{(x_1,x_2)\in\bbR^2\q:\q x_2>0}$.

\begin{thm}\label{Main thm 1} There exists a bounded open set $\Omg\subset\bbR^2_+$, {normalized to have unit impulse $\int_\Omg x_2\q d\bfx=1,$} satisfying the following properties: \begin{enumerate}
		\item[(i)] \textup{(Touching)} There exists a constant $R>0$ such that $$\big\{\q\bfx\in\bbR^2\q:\q|\bfx|<R\q\big\}\q\subset\q \overline{\Omg\cup\Omg_-}.$$ 
		\item[(ii)] \textup{(Traveling wave)} The $(L^1\cap L^\ift)(\bbR^2)-$ function $\tht_0:\bbR^2\to\bbR$ defined by $$\tht_0:=\bfone_\Omg-\bfone_{\Omg_-}$$ generates a traveling wave in the sense that, for the constant $W>0$ given by 
		$$W:=\f{1}{3}\int_{\bbR^2}|\bfu_0|^2\q d\bfx\qd\mbox{ where }\bfu_0:=k*\tht_0,$$  the function $\tht:[0,\ift)\times\bbR^2\to\bbR$ defined by $$\tht(t,\bfx):=\tht_0(\bfx-(W,0)t)$$ solves the 2D incompressible Euler equations \eqref{eq_Euler eq.}.
		\item[(iii)] \textup{(Steiner symmetric with continuous boundary)} There exists a continuous function  
		$l:[0,\ift)\to[0,\ift)$ such that 
		$$\Big\{\q(x_1,x_2)\in \overline{\Omg} \q:\q x_2=s\q\Big\}\q=\q 
		\Big\{\q(x_1,s)\in {\overline{\bbR^2_+}}\q:\q|x_1| \le  l(s)\q\Big\}\qd\mbox{ for each }s \ge 0.$$ 
	\end{enumerate}
\end{thm}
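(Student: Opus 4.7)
The plan is to construct the Sadovskii patch as a maximizer of the kinetic energy among an admissible class of odd vorticities, with the impulse fixed at the prescribed value and the circulation bounded above. This is the Benjamin--Turkington--Burton variational philosophy, tailored so that the touching property (i) emerges from the vanishing of the Lagrange multiplier attached to the circulation constraint.

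\medskip
\noindent\textbf{Variational formulation and existence.} For a parameter $\mu>0$ set
\[
\calK_\mu = \set{\theta\in L^\infty(\bbR^2)\,:\, \theta(x_1,-x_2)=-\theta(x_1,x_2),\ 0\le\theta\le 1 \text{ on }\bbR^2_+,\ \int_{\bbR^2_+}\theta\,d\bfx\le\mu,\ \int_{\bbR^2_+}x_2\theta\,d\bfx=1}
\]
and consider $\sup_{\theta\in\calK_\mu} E[\theta]$, where $E[\theta]=\tfrac12\int\psi[\theta]\,\theta\,d\bfx$. Because $\int\theta=0$ globally, the interaction energy is finite and weakly-$*$ continuous on bounded sets of $L^\infty$. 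One takes a maximizing sequence and applies a concentration-compactness argument: the impulse normalization rules out escape to $x_2=+\infty$ (``vanishing''), while ``dichotomy'' into two far-apart pieces is ruled out by showing that bringing the pieces closer strictly increases the mutual interaction while respecting the constraints. An $x_1$-Steiner symmetrization of the limit (which preserves all constraints) yields a maximizer $\theta^*$ that is even and nonincreasing in $|x_1|$.

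\medskip
\noindent\textbf{Euler--Lagrange, traveling wave, and touching.} A standard bathtub rearrangement along superlevel sets of $\psi^*-W x_2$ forces $\theta^*\in\{0,1\}$, so $\theta^*=\bfone_{\Omega^*}-\bfone_{\Omega^*_-}$, and first-order optimality yields Lagrange multipliers $W=W(\mu)>0$ (impulse) and $\beta=\beta(\mu)\ge 0$ (circulation upper bound) with
\[
\Omega^* = \set{\bfx\in\bbR^2_+\,:\,\psi^*(\bfx)-W x_2>\beta}
\]
up to null sets. Since $\bfu^*-W\bfe_1=\nabla^\perp(\psi^*-Wx_2)$ is tangent to $\partial\Omega^*$, the traveling-wave property (iii) is immediate modulo the value of $W$. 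Now comes the crux: $\psi^*$ is odd in $x_2$ and thus vanishes on the axis, so $\psi^*-Wx_2=0$ on $\{x_2=0\}$; consequently $\overline{\Omega^*}$ touches the axis exactly when $\beta\le 0$, with the critical case $\beta=0$ yielding the genuine Sadovskii configuration. To secure $\beta=0$ the strategy is to view $\beta$ as a function of $\mu$: for small $\mu$ the circulation bound is strictly active and $\beta(\mu)>0$, giving a separated counter-rotating pair; past a critical threshold $\mu_*$ the bound becomes slack and, combined with the odd symmetry forbidding the patch from leaking into the lower half-plane, it forces $\beta(\mu)=0$. An intermediate-value argument based on continuity and monotonicity of $\beta(\mu)$, verified through test perturbations that trade area against impulse along the extremal family, then produces the desired $\mu_*$ with $\beta(\mu_*)=0$. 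This monotonicity/continuity of $\beta(\mu)$ is the principal technical obstacle, since one must rule out discontinuous jumps of the maximizing family across the touching threshold.

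\medskip
\noindent\textbf{Speed formula and boundary regularity.} With $\beta=0$ in hand, both $\partial\Omega^*$ and the axis portion of $\overline{\Omega^*}$ lie on $\{\psi^*-Wx_2=0\}$. A Pohozaev-type computation---multiplying $-\Delta(\psi^*-W x_2)=\bfone_{\Omega^*}$ in the upper half-plane by $\psi^*-Wx_2$, by $x_2$, and by $\bfx\cdot\nabla(\psi^*-Wx_2)$, then integrating over $\Omega^*$ with all boundary contributions vanishing thanks to $\psi^*-Wx_2=0$ on $\partial\Omega^*\cup(\overline{\Omega^*}\cap\{x_2=0\})$---combined with the classical planar virial identity $\int(u^{*,1})^2=\int(u^{*,2})^2$ coming from the stress-tensor form of the steady Euler equation in the moving frame, yields $W=\tfrac13\int|\bfu^*|^2$. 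Finally, the $x_1$-Steiner symmetry of $\theta^*$ from the existence step gives each horizontal slice of $\Omega^*$ as a symmetric interval $[-l(s),l(s)]$; continuity of $l$ follows from the $C^{1,\alpha}$ regularity of $\psi^*$ (guaranteed by $\theta^*\in L^\infty$) together with the open-set structure $\Omega^*=\{\psi^*>Wx_2\}$.
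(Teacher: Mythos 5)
Your overall variational framework (energy maximization with prescribed impulse and a circulation upper bound, Steiner symmetrization, patch-type Euler--Lagrange relation with multipliers $W$ and $\beta$) matches the paper's, but there are two genuine gaps at exactly the points where the paper has to work hardest. First, you assert that ``$\overline{\Omega^*}$ touches the axis exactly when $\beta\le 0$.'' This does not follow: since $\psi^*$ is odd, $\psi^*-Wx_2$ vanishes identically on $\{x_2=0\}$, so even with $\beta=0$ the open set $\{\psi^*-Wx_2>0\}$ could a priori be bounded away from the axis; touching requires showing that $\partial_{x_2}(\psi^*-Wx_2)=u^{*,1}-W>0$ near the origin. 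The paper proves this via a separate central-speed lemma: decomposing $u^1$ on the axis into the contribution of the lower patch, using the identity $W=\|\omega\|_1^{-1}\int u^1_{nl}\,\omega\,d\bfx$ (so $W$ is the \emph{average} of the nonlocal velocity over the support), and exploiting Steiner symmetry to show $u^1_{nl}$ attains its strict maximum at the origin, whence $u^1(0,0)>2W$. Without some version of this argument your step from $\beta=0$ to property (i) is unjustified.

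Second, your mechanism for securing $\beta=0$ --- continuity and monotonicity of $\beta(\mu)$ plus an intermediate-value argument across a ``touching threshold'' --- is not carried out, and you yourself flag it as the principal obstacle; it is also unnecessary. The paper instead argues by contradiction and complementary slackness: if $\beta>0$ then the circulation bound is saturated, but quantitative estimates (a Green's-function bound giving $\|\omega\|_1\lesssim (M/W)^{2/3}$, the Poho\v{z}aev identity $I_M=\tfrac34WM+\tfrac12\gamma\|\omega\|_1$, and the scaling lower bound $I_M\gtrsim M^{4/3}$) force $1\lesssim M^{4/15}$, impossible for small impulse relative to the mass bound. This yields $\beta=0$ on a whole parameter range with no need to locate a critical $\mu_*$ or to control how maximizers vary with $\mu$. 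A smaller issue: continuity of $l$ on all of $[0,\infty)$, including at $s=0$, does not follow merely from $C^{1,\alpha}$ regularity of $\psi^*$; the paper additionally needs strict monotonicity of $\mathcal{G}[\omega]$ in $x_1>0$ to apply the implicit function theorem, and a separate mean-value argument to identify $\lim_{s\to 0^+}l(s)$ with the unique $a>0$ where $\widetilde{\mathcal{G}[\omega]}_{x_2}(a,0)=W$.
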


 The rest of the introduction is organized as follows. We begin with describing the relevance of Sadovskii vortex patch in fluid dynamics in Subsection \ref{subsec_sadovskii_important}. Then, in Subsection \ref{subsec_technical}, we divide the statement of Theorem \ref{Main thm 1} into three theorems, which together gives the main result. Subsection \ref{subsec_ideas} provides key ideas that goes into the proofs. Important technical discussions and open problems are given in Subsections \ref{subsec_variational} and \ref{subsec_open}, respectively.  

\subsection{Significance of Sadovskii vortex patch}\label{subsec_sadovskii_important} \q

Counter-rotating vortex pairs are one of the simplest flow patterns and easily observed in various situations, most notably in the wake of any flying objects. There have been numerous studies of counter-rotating vortex pairs from experimental, computational, and theoretical point of view. An excellent review of the literature is given in Leweke--Diz\`{e}s--Williams \cite{Leweke.Dizes.Williamson.2016} (see also Saffman \cite{Saffman.1992}). While Sadovskii vortex can be thought of just an example of such vortex pairs, it has several distinctive characters as we shall explain below. 

\medskip

\noindent \textbf{History  of Sadovskii vortex}. The Prandtl--Batchelor theorem states that, if the vanishing viscosity limit of a two-dimensional flow has closed streamlines, then the limiting vorticity is piecewise constant in each region separated by the streamlines. Therefore, there has been significant interest in understanding potential shape of limiting closed streamlines in various flow geometries. The case of a streamline intersecting the fluid boundary is of particular interest. 
This was precisely the motivation of Sadovskii (1971) \cite{SADOVSKII.1971}: in the upper half-plane $\bbR^2_+$, he considered the problem of determining a region $\Omg$ of uniform vorticity corresponding to a steady flow which is irrotational in $\bbR^2_+ \backslash \Omg$. He assumed that the steady velocity is asymptotic to $- W e_{1}$  at infinity, which is precisely \eqref{eq_steady Euler eq.}. The key assumption on $\Omg$ was $\overline{\Omg\cap {\bbR^2_+}} = [-1,1]$, and  the problem was to determine the streamline connecting $(-1,0)$ with $(1,0)$ in the interior of $\bbR^2_+$ (see Figure \ref{fig:Sadovskii}). Sadovskii was able to numerically compute the family of such streamlines, parameterized by the strength of the vortex sheet (size of the velocity jump) across this interior streamline. Sadovskii's original computation   indicated that the case when the vortex sheet is absent is distinguished by the fact that the corresponding streamline is orthogonal to the domain boundary; see \cite[Figure 3]{SADOVSKII.1971}.  After this work, the term \textit{Sadovskii vortex} was widely used to denote equilibrium inviscid flows involving uniform vorticity regions, potentially with vortex sheets, in various geometries (\cite{Chernyshenko.1993.Stratified,Chernyshenko.1993.Desity-stratified,Smith.Freilich.2017,Moor.Saffman.Tanveer.1988,Antipov.Zemlyanova,Christopher.Llewellyn}). Therefore, to avoid confusion, in the current work we reserve the phrase \textit{Sadovskii vortex patch} to denote a uniformly translating vortex patch, touching the boundary of the half-plane, \underline{without} an accompanying vortex sheet. Sadovskii himself wrote \cite[p. 734]{SADOVSKII.1971} that ``\textit{An analytical proof of the existence and uniqueness (or otherwise) of the solution of this system does not seem possible.}'' While many authors have studied the properties of Sadovskii vortex patch since then (\cite{MR0734586, Chernyshenko.1988, Chernyshenko.1993.Stratified, Chernyshenko.1993.Desity-stratified, Smith.Freilich.2017, Habibah.Fukumoto.1913, Kim.2003, Moor.Saffman.Tanveer.1988,Overman.1986, Saffman.Szeto.1980,Saffman.Tanveer.1982, Pierrehumbert.1980,Smith.1986,Yang.Kubota.1994}), the rigorous proof of its existence seems to be still open. In particular, it is not clear whether the boundary of the vortex region defines a smooth curve up to $\partial \bbR^{2}_{+}$, the property assumed by Sadovskii and most of the subsequent authors. {The paper \cite[Table I]{MR0734586} includes numerical computations for a Sadovskii vortex patch, which shows that when the maximum height from the axis is normalized to be $1$, the length touching the $x_{1}$-axis is approximately $3.398$, the area (or mass) is about $2.47$, and the vertical center of mass is around $0.415$.}  Lastly, we point out that essentially the same problem of Sadovskii was considered earlier by Gol'dshtik (1962, \cite{Goldshtik}) and Childress (1966, \cite{Childress.1966}), although these authors were using certain approximations of the Euler equations to compute the solutions.

\begin{figure}
	\centering
	\includegraphics[scale=1]{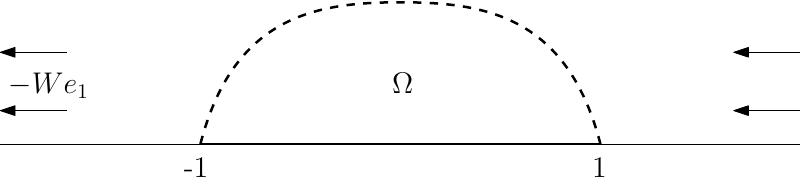}  
	\caption{Setup of Gol'dshtik (1962), Childress (1966), and Sadovskii (1971). The steady vorticity is 0 and 1 in $\bbR^{2}_{+}\backslash \Omg$ and $\Omg$, respectively, and the problem is to find the dashed curve.} \label{fig:Sadovskii}
\end{figure}

\medskip

\noindent \textbf{Sadovskii vortex as the endpoint of the bifurcation diagram}. There exist a large set of counter-rotating vortex pairs, even if we restrict them to the class of symmetric and uniformly-translating ones. In what follows, we shall {further} assume that the uniformly translating vortex pairs have uniform normalized vorticity (i.e. vortex patch). We further assume for simplicity that the vortex is odd symmetric with respect to the $x_{1}$-axis and non-negative on the upper half-plane $\bbR^{2}_{+}$. The important physical parameters are the circulation $\Gmm$ (=mass), traveling velocity $W$, and the impulse $\mu$. (Note that all of them are positive under our setting.) Using scale invariance of the incompressible Euler equations, one may normalize one of them: our choice is to normalize the impulse to be 1, as seen in the statement of Theorem \ref{Main thm 1}. In the upper half plane, a point vortex (vorticity given as the Dirac delta measure at a point) is formally a traveling wave solution of the Euler equations, and one may de-singularize it to obtain uniformly traveling vortex patches with $\nrm{\omg}_{L^\infty} = O(\eps^{-1})$ and $\Gmm,W,\mu = O(1)$ for sufficiently small $\eps>0$ (\cite{Burton.2021,CQZZ.2023,Wang.2024}). Normalizing such a vortex to be of unit vorticity can give a uniformly traveling vortex patch with $\Gmm = \mu = 1$ and $W = O(\eps)$. As $\eps\to 0^+$, the patches are asymptotically circular and $W \to 0^+$. Then, one may find a branch of uniformly traveling patch solutions by increasing $W$ and keeping $\mu = 1$. Numerical studies show that there is a critical threshold at which the patch becomes a Sadovskii vortex and bifurcation cannot be continued further (\cite{Overman.1986,Fegiz.Willamson.2012,Leweke.Dizes.Williamson.2016}).  This phenomenon seems to show relevance of Sadovskii vortex patch in dynamics of solutions; we expand this point further in Subsections \ref{subsec_ideas} and \ref{subsec_variational} below.

\medskip

\noindent \textbf{Sadovskii vortex in three-dimensional flows}. In three-dimensional incompressible inviscid flows, a counter-rotating pair of vortex rings can approach each other, increasing their distance from the symmetry axis and at the same time giving rise to various instabilities. This ``head-on collision'' scenario of vortex rings was the subject of intense numerical and computational studies (\cite{KaMi,KaMy,ShLe,CWCCC,Shariff.Leonard.Zabusky.Ferziger.1988,LN,SSH,PeRi,Os,CLL,GWRW,SMO,Shariff.Leonard.Zabusky.Ferziger.1988,Shariff.Leonard.Ferziger.2008,Carley.2002}). Employing the axisymmetric Euler equations and imposing exact odd symmetry (this is equivalent with posing the Euler equations on the upper half-space $\bbR^{3}_{+}$), one can suppress various instabilities and ask the asymptotic behavior of the axial cross section of the rings for large times. 

Based on arguments based on variational principles, Childress \cite{Childress.2007,Childress.2008} argued that the growth of the vorticity maximum   $\nrm{\omg(t,\cdot)}_{L^\infty}$ as well as the distance of the vortex from the symmetry axis can grow at most by $t^{4/3}$ as $t\to\infty$. (The exponent $4/3$ can be explained by dimensional arguments based on kinetic energy conservation.) Later, Childress and his collaborators (\cite{Childress.Gilbert.Valiant.2016,Childress.Gilbert.2018}) performed numerical computations and provided theoretical and computational evidence that the optimal $t^{4/3}$ growth is indeed achieved by vortex rings whose axial cross sections are asymptotic to the Sadovskii vortex after a time-dependent rescaling. As long as the cross section of the vortex ring is sufficiently localized, this is reasonable since far away from the symmetry axis, the axisymmetric Biot--Savart kernel converges to that for the two-dimensional Euler equations (\cite{Childress.1987}). Furthermore, note the numerical computations done by Carley \cite{Carley.2002}.\footnote{See dynamical appearance of Sadovskii-like vortex in  \url{https://people.bath.ac.uk/ensmjc/Research/Vortex/headon.html}.} Here, it seems that the appearance of this limiting asymptotic profile is not depending sensitively on the shape of the initial vortex ring, and in particular, it is not needed that the initial ring is ``attached'' to the boundary of the upper half-space. Indeed, this dynamical appearance of the Sadovskii vortex patch was observed earlier and explicitly by Shariff--Leonard--Ferziger in 2008 \cite{Shariff.Leonard.Ferziger.2008} for the head-on collision of two Hill's spherical vortices, see \cite[Figures 8, 9]{Shariff.Leonard.Ferziger.2008}. So far, the best known lower bound in time on the vorticity growth is slower than linear-in-time (\cite{Choi.Jeong.2021.vortex_streatching,Gustafson.Miller.Tsai}) and it seems that understanding stability of the Sadovskii vortex in the two-dimensional case is the natural first step towards establishing the sharp growth $t^{4/3}$.

\subsection{Technical statements} \label{subsec_technical}  \q
In this subsection, in order to more easily convey the ideas needed for the proof of our main result (Theorem \ref{Main thm 1}), we divide Theorem \ref{Main thm 1} into    the following three theorems : Variational method (Theorem \ref{Main thm: variational method}), Touching property (Theorem \ref{Main thm: central speed}), and Boundary behavior (Theorem \ref{Main thm: bdry}).\\

To find a traveling wave solution, for some constant $W>0$, we are looking for a steady solution $(\bfU,\omg)$ that solves 
 \begin{equation}\label{eq_steady Euler eq.}
		\left\{
		\begin{aligned}
			&\qd\bfU\cdot\nb\omg=0\qd\mbox{in}\qd\bbR^2,\\
         &\qd\bfU=k*\omg-(W,0),
		\end{aligned}
		\right.
	\end{equation} so that the solution $(\bfu,\tht)$ in the form of 
 \begin{align}
     \bfu(t,\bfx)&=\bfU(\bfx-(W,0)t)+(W,0),\\
     \tht(t,\bfx)&=\omg(\bfx-(W,0)t),
 \end{align} solves \eqref{eq_Euler eq.}. The equations \eqref{eq_steady Euler eq.} admits a \textit{vortex patch} solution of a form 
 $$\omg=\bfone_{\set{\q\Psi>0\q}}=\
 \left\{
		\begin{aligned}
			\q {1} \qd&\mbox{for}\q \Psi>0,\\
                0\qd&\mbox{for}\q \Psi\leq0
		\end{aligned}
		\right. 
  \qd\mbox{in }\q\bbR^2_+,  $$
  with the odd symmetry $\omg(x_1,-x_2)=-\omg(x_1,x_2)\q\mbox{ for each }(x_1,x_2)\in\bbR^2_+,\q$ and $\bfU=\nb^\perp\Psi$ where $$\Psi(\bfx):=\psi[\omg](\bfx)-Wx_2-\gmm$$ for some constant $\gmm\geq0$ which is referred to as the flux constant in the literature.
   One may easily observe that touching of a vortex patch implies $\gmm=0$. We will prove that such a solution with $\gmm=0$ can be found, and it is a Sadovskii vortex patch.\\
 
We will obtain Theorem \ref{Main thm 1} by the following steps. In Theorem \ref{Main thm: variational method}, we will find a solution $\omg$ of \eqref{eq_steady Euler eq.} satisfying that, with the constants $W>0$ and $\gmm=0$, we have 
$$\omg=\bfone_{\set{\bfx\in\bbR^2_+\q:\q\psi[\omg](\bfx)-Wx_2>0}}\qd\mbox{ a.e. in }\bbR^2_+.$$ In Theorem \ref{Main thm: central speed}, for the function $\omg$ we have found above, we will show that there exists $R>0$ such that we have  
$$\big[\mbox{ Touching }\big]\q\qd \set{\q\bfx\in\bbR^2_+:|\bfx|<R\q}\subset\set{\q\bfx\in\bbR^2_+:\psi[\omg](\bfx)-Wx_2>0\q}.$$ In Theorem \ref{Main thm: bdry} we will say that the boundary of the set $\set{\psi[\omg]-Wx_2>0}$ is continuous.\\

 In Theorem \ref{Main thm: variational method} below,  we use a variational method to prove that there exists a patch-type solution of \eqref{eq_steady Euler eq.} which consequently generates a traveling wave solution of \eqref{eq_Euler eq.} with the traveling speed $W>0$ and the constant $\gmm=0$. In specific, we will solve the variational problem of finding a maximizer $\omg$ of the kinetic energy with a fixed constraint on the impulse $\int_{\bbR^2_+}x_2\omg\q d\bfx$. The proof of Theorem \ref{Main thm: variational method} can be found in the proof of Theorem \ref{thm_ no mass cond}.
\begin{thm}\label{Main thm: variational method}
    Define $K$ by
$$ K:= \Bigg\{\q\omg\in L^1(\bbR^2)\q:\q \omg=\bfone_A-\bfone_{A_-}\q\mbox{ a.e. in }\bbR^2\qd\mbox{for some open bounded set }A\subset\bbR^2_+\q\q\mbox{satisfying} \q\int_A x_2\q d\bfx=1\Bigg\}$$
    and define $S\subset K$ by the set of maximizers of the kinetic energy $$\int_{\bbR^2}|\bfu|^2\q d\bfx,\qd\mbox{ where } \q\bfu:=k*\omg\qd\mbox{ for each }\q\omg\in K.$$ Then the maximal kinetic energy in the class $K$ is finite, and the set $S$ is nonempty. Moreover, there exist absolute constants $C_0, C_1, C_2\in(0,\ift)$ satisfying $C_1<C_2$ such that, for each $\omg\in S$, there exists a constant $\tau\in\bbR$ and a set $\Omg\subset\bbR^2_+$ which is open bounded, satisfying the following properties:\\

\noindent (i) By denoting $\omg_\tau$ as the translation of $\omg$ of amount $(\tau,0)$ by  $\omg_\tau(x_1,x_2):=\omg(x_1-\tau,x_2)$, we have 
$$\omg_\tau=\bfone_\Omg-\bfone_{\Omg_-}\qd\mbox{ a.e. in }\q\bbR^2.$$ Moreover, we get $$\Omg\q=\q\big\{\q\bfx\in\bbR^2_+\q:\q\psi[\omg_\tau](\bfx)-Wx_2>0\q\big\}$$ with the constant $W>0$ given by $$W:=\f{1}{3}\int_{\bbR^2}|\bfu|^2\q d\bfx\qd\mbox{ where }\bfu:=k*\omg.$$  

\noindent(ii) There exists a function $l:(0,\ift)\to[0,\ift)$ such that we have
$$\Big\{\q(x_1,x_2)\in\Omg\q:\q x_2=s\q\Big\}\q=\q 
\Big\{\q(x_1,s)\in\bbR^2_+\q:\q|x_1|< l(s)\q\Big\}\qd\mbox{ for each }s>0.$$  Moreover, we have   $$\Omg\q\subset\q \big\{\q\bfx\in\bbR^2\q:\q|\bfx|<C_0\q\big\}\qd\mbox{ and }\qd
    C_1\q<\q|\Omg|\q<\q C_2$$ where $|\cdot|$ denotes the (Lebesgue) measure in $\bbR^2$.\\

\end{thm}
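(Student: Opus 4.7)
The plan is to establish Theorem \ref{Main thm: variational method} by the direct method of the calculus of variations, combining Steiner symmetrization to restore compactness with a Bauer/Burton-type extreme-point argument to recover the bang--bang structure of the maximizer.

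\textbf{A priori bound and symmetrization.} I would first show $\sup_K E<\infty$, where $E[\omg]:=\int_{\bbR^2}|\bfu|^2\,d\bfx$. Using the image-reflection formula $E[\omg]=2\int_A\psi\,d\bfx$ with $\psi(\bfx)=(4\pi)^{-1}\int_A\log(|\bfx-\bfy^*|^2/|\bfx-\bfy|^2)\,d\bfy$ and $\bfy^*:=(y_1,-y_2)$, combined with the elementary inequality $\log(1+t)\le\min(t,2\sqrt{t})$, the impulse bound $\int_A x_2\,d\bfx=1$, and $|\omg|\le 1$ (with a splitting of the integration into near/far from the diagonal), one derives a uniform upper bound; a specific competitor such as a small disc at unit height gives $\sup_K E>0$. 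For any $\omg\in K$, its Steiner symmetrization $\omg^*$ about a vertical line stays in $K$ (horizontal slice areas, hence the impulse, are preserved) and does not decrease $E$ by the Riesz rearrangement inequality applied to the half-plane kernel $\log(|\bfx-\bfy^*|^2/|\bfx-\bfy|^2)$, which is symmetric-decreasing in $x_1-y_1$. Hence one may take a maximizing sequence $\{\omg_n\}$ Steiner-symmetric about $\{x_1=0\}$ after translation.

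\textbf{Compactness and weak limit.} Next I would show that the supports of Steiner-symmetric near-maximizers lie in a common ball $B_{C_0}$. Large vertical extent is excluded by the impulse constraint; a large horizontal spread is excluded by the observation that long, thin slab-like configurations (forced near the axis by impulse conservation) have energy $E=O(h)$ with $h$ the height of the slab, which contradicts $\sup_K E>0$. With $\supp(\omg_n)\subset B_{C_0}$ and $|\omg_n|\le 1$, weak-$*$ compactness in $L^\infty$ yields $\omg_n\weakto\omg_*$ along a subsequence, and Rellich compactness of $\omg\mapsto\psi$ on $L^2(B_{C_0})$ gives $E[\omg_n]\to E[\omg_*]$. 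The impulse and oddness pass to the limit, so $\omg_*$ maximizes $E$ on the closed convex set $\widehat K:=\{\omg\in L^\infty(\bbR^2):|\omg|\le 1,\ \omg\text{ odd},\ \int x_2\omg\bfone_{\{x_2>0\}}\,d\bfx=1\}$.

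\textbf{Bang--bang, Euler--Lagrange, and speed.} Since $E$ is a positive-definite quadratic form (hence convex), its maximum on the convex weakly-$*$-compact set $\widehat K$ is attained at an extreme point by Bauer's maximum principle; the extreme points of $\widehat K$ are precisely elements of $K$, so $\omg_*\in K$. The Euler--Lagrange condition for $\calL[\omg]:=E[\omg]-4W\int x_2\omg\bfone_{\{x_2>0\}}\,d\bfx$ (with $4W$ the Lagrange multiplier for the impulse constraint), tested against odd variations $\eta$ with $\omg_*+\eps\eta\in\widehat K$, gives $\omg_*=\bfone_{\{\psi_*>Wx_2\}}$ on $\bbR^2_+$ up to a null set. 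Real-analyticity of $\psi_*-Wx_2$ on $\bbR^2_+\setminus\partial\Omg$ (it is harmonic where $\omg_*=0$ and a smooth solution of $\Delta u=-1$ where $\omg_*=1$) makes the level set $\{\psi_*=Wx_2\}$ Lebesgue-negligible, giving $\omg_*=\bfone_\Omg-\bfone_{\Omg_-}$ with $\Omg=\{\bfx\in\bbR^2_+:\psi_*-Wx_2>0\}$ open; this establishes (i). The Steiner symmetry with half-width $l(s)$ in (ii) is inherited from the sequence (and pinned up to translation by strict rearrangement). The identity $W=E/3$ is a scaling Pohozaev: under $\omg_\lambda(\bfx):=\omg_*(\bfx/\lambda)$ one computes $E[\omg_\lambda]=\lambda^4 E$ and $\mu[\omg_\lambda]=\lambda^3\mu$, so stationarity of $\calL$ under scaling gives $\tfrac{d}{d\lambda}\calL[\omg_\lambda]\big|_{\lambda=1}=4E-12W\mu=0$, hence $W=E/(3\mu)=E/3$. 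Finally, the support bound $\Omg\subset B_{C_0}$ comes from the uniform bound above; $|\Omg|\ge 1/C_0$ from $1=\int_\Omg x_2\le C_0|\Omg|$; and $|\Omg|\le C_2$ from the energy upper bound combined with an elementary volume estimate.

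\textbf{Main obstacle.} The most delicate step is ruling out concentration and spreading in the compactness argument under the impulse-only constraint: unlike the standard setting of a fixed-mass rearrangement class, $K$ fixes neither the area nor any $L^p$-norm of $\omg$ for $p<\infty$, so Steiner symmetry and the impulse bound alone do not obviously prevent maximizing sequences from spreading into thin, long configurations. A quantitative comparison of energy as a function of the patch's aspect ratio, exploiting the near-axis slab approximation, is needed. A secondary technical point is verifying $W>0$ so that $\Omg$ is neither empty nor the entire upper half-plane; this should follow from the positivity of $\sup_K E$ together with the scaling identity $W=E/3$.
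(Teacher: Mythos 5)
Your overall architecture (symmetrize, extract a weak limit, recover bang--bang structure, read off the Euler--Lagrange relation and the Poho\v{z}aev identity $W=\frac13\int|\bfu|^2$) is sound in outline, and several pieces (the scaling derivation of $W$, the Steiner symmetrization, the extreme-point characterization of patches) are workable. But there is a genuine gap at the very first step, and it is the crux of the whole theorem: you assert that $\sup_K E<\infty$ follows from the impulse constraint, $|\omg|\le 1$, and a near/far splitting of the diagonal with $\log(1+t)\le\min(t,2\sqrt t)$. This does not close. Any such splitting with cutoff $\dlt(\bfx)$ produces a far-field term controlled by $\int_A x_2\,\dlt(\bfx)^{-2}\,d\bfx$ and a near-field term controlled by $\int_A \dlt(\bfx)^2\big(1+\log_+(x_2/\dlt(\bfx))\big)\,d\bfx$; optimizing over $\dlt$ always leaves a quantity such as $|A|$, $\int_A x_2^2\,d\bfx$, or $\sup_A x_2$, none of which is bounded in the class $K$ (the class fixes only $\int_A x_2\,d\bfx=1$). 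The paper's own energy estimate \eqref{eq_energy:finite} is $E(\omg)\lesssim \nrm{\omg}_1\nrm{\omg}_2^{1/2}\nrm{x_2\omg}_1^{1/2}$, which visibly requires the mass, and the authors explicitly flag (Subsection \ref{subsec_ideas}) that the finiteness of $I_{1,\ift}$ is \emph{not} accessible directly. Their resolution is an indirect bootstrap: first solve the auxiliary problem $K_{\mu,1,1}$ with a mass cap, derive the Euler--Lagrange relation $\omg=\bfone_{\{\calG[\omg]-Wx_2-\gmm>0\}}$ and the identity $I_M=\frac34 WM+\frac12\gmm\nrm{\omg}_1$ there, prove the quantitative small-impulse estimates $\gmm=0$ and $\nrm{\omg}_1\le CM^{2/3}$ (Lemmas \ref{lem_small impulse to gmm 0} and \ref{lem_ests for small impulse}, which themselves need the lower bound $I_M\gtrsim M^{4/3}$ and $W\gtrsim M^{1/3}$), and only then use the scaling equivalence $S_{1,\nu,1}\cong S_{\nu^{-3/2},1,1}$ to conclude that the mass cap becomes inactive for large $\nu$, whence $I_{1,\ift}=I_{1,\nu,1}<\ift$ and $S_{1,\ift}=S_{1,\nu,1}$. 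Your ``Main obstacle'' paragraph correctly identifies this as the delicate point, but the ``quantitative comparison of energy as a function of the patch's aspect ratio'' you defer to is precisely the missing proof, and the slab heuristic you invoke does not rule out, say, configurations with small pieces at large height or many separated components.

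A secondary gap is the compactness step. You claim that Steiner-symmetric near-maximizers have support in a common ball $B_{C_0}$, with vertical confinement ``excluded by the impulse constraint.'' It is not: $\int_A x_2\,d\bfx=1$ permits tiny pieces at arbitrarily large height, and a near-maximizer can always carry a far-away wisp of negligible impulse and energy, so no uniform support bound holds along a maximizing sequence. The paper avoids this entirely by working with weak $L^2$ convergence on all of $\bbR^2_+$ and invoking the energy-continuity lemma for Steiner-symmetric sequences (\cite[Lemma 3.5]{AC2019}); boundedness of the support is then proved \emph{a posteriori} for actual maximizers from the Euler--Lagrange relation (Lemma \ref{lem_w:cpt_supp}), not assumed for the sequence. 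If you repair the first gap by adopting the mass-capped auxiliary problem, this second issue is resolved along the same lines.
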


\begin{rmk} In Theorem \ref{Main thm: variational method}, the traveling speed is proportional to the maximal kinetic energy. Moreover, observe that the condition $\int_A x_2\q d\bfx=1$ in the definition of $K$ can be replaced by $\int_A x_2\q d\bfx=\mu$ for any $\mu\in(0,\ift)$. Indeed, the scaling  $$\omg'(\bfx):=\omg(\mu^{-1/3}\bfx)\qd\mbox{ for }\q \omg\in S$$  gives a one-to-one correspondence between the sets of maximizers. Then we get the same results with adjusted constants $W', C_0', C_1', C_2'>0$ under the relations 
$${W'=\f{1}{3\mu}\int_{\bbR^2}|\bfu'|^2\q d\bfx=W\mu^{1/3},}\qd C_0'=C_0\mu^{1/3},\qd C_1'=C_1\mu^{2/3},\qd C_2'=C_2\mu^{2/3}.$$ \end{rmk}

Theorem \ref{Main thm: central speed} below says that, for each traveling vortex patch that is symmetrically concentrated to a vertical line with odd symmetry with respect to the horizontal line and travels with a constant velocity $(W,0)$, the fluid velocity at the center of the dipolar vortex has the direction $(1,0)$, with the speed greater than $2W$. Its proof will be deduced from the proof of Lemma \ref{lem_ central speed > 2W }.

\begin{thm}\label{Main thm: central speed} Let $W>0$ be any constant. Let $\Omg\subset\bbR^2_+$ be an open bounded set satisfying that $\displaystyle\int_\Omg x_2\q d\bfx>0$, and for some function $l:(0,\ift)\to[0,\ift)$ assume that
$$\Big\{\q(x_1,x_2)\in\Omg\q:\q x_2=s\q\Big\}\q=\q 
\Big\{\q(x_1,s)\in\bbR^2_+\q:\q|x_1|< l(s)\q\Big\}\qd\mbox{ for each }s>0,$$ and that the function $\tht:[0,\ift)\times\bbR^2\to\bbR$ given by  $$\tht(t,\bfx):=\Big(\bfone_\Omg-\bfone_{\Omg_-}\Big)(\bfx-(W,0)t)$$ solves \eqref{eq_Euler eq.}. 
Then for the velocity function $\bfu:[0,\ift)\times\bbR^2\to\bbR^2$ defined by $\bfu=k*\tht,\q$ we have $$u^1\Big(t,(Wt,0)\Big)=u^1\Big(0,(0,0)\Big)>2W\qd\mbox{ and }\qd u^2\Big(t,(Wt,0)\Big)=u^2\Big(0,(0,0)\Big)=0 \qd\mbox{ for each }t>0. $$    
\end{thm}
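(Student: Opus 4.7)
The plan is to attack the theorem's three assertions separately. The time-invariance $\bfu(t,(Wt,0))=\bfu(0,(0,0))$ is bookkeeping: since $\tht(t,\bfx)=\tht_0(\bfx-(W,0)t)$ with $\tht_0:=\bfone_\Omg-\bfone_{\Omg_-}$ by hypothesis, translation-equivariance of the Biot-Savart operator gives $\bfu(t,\bfx)=\bfu_0(\bfx-(W,0)t)$ for $\bfu_0=k*\tht_0$, and evaluation at $\bfx=(Wt,0)$ yields the claim. Both remaining assertions then reduce to properties of $\bfu_0(0,0)$.

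The equality $u^2(0,0)=0$ is a symmetry argument. The Steiner hypothesis forces $\tht_0$ to be even in $x_1$, while $k^2(\bfy)=y_1/(2\pi|\bfy|^2)$ is odd in $y_1$, so the Biot-Savart integrand $k^2(-\bfy)\tht_0(\bfy)$ is odd in $y_1$ and integrates to zero.

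The substantive content is the strict inequality $u^1(0,0)>2W$. I would first use the oddness of $\tht_0$ in $x_2$ to fold $\Omg_-$ onto $\Omg$, producing
\[
u^1(0,0)=\frac{1}{\pi}\int_\Omg\frac{y_2}{|\bfy|^2}\,d\bfy=\frac{2}{\pi}\int_0^\ift\arctan\!\Bigl(\frac{l(s)}{s}\Bigr)\,ds,
\]
where the second equality uses the Steiner slicing and $\int_{-l}^{l}s/(x_1^2+s^2)\,dx_1=2\arctan(l/s)$. To connect this with $W$, I would test the moving-frame steady equation $(\bfu-W\bfe_1)\cdot\nb\tht=0$ against the weight $x_1 x_2$; integration by parts gives $\int(x_2u^1+x_1u^2)\tht\,d\bfx=W\int x_2\tht\,d\bfx$, and combining this with the two-dimensional Pohozaev-type identity $\int(\bfx\cdot\nb\psi)\tht\,d\bfx=0$ (valid because $\int\tht=0$ makes $\psi$ decay like $|\bfx|^{-1}$, hence $|\nabla\psi|^2\in L^1$) isolates the moment identity
\[
\int_\Omg x_2\,u^1\,d\bfx=\frac{W}{2}\int_\Omg x_2\,d\bfx.
\]

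The hard step is to combine these ingredients into the strict inequality. My plan is to substitute the Biot-Savart representation of $u^1(\bfx)$ (for $\bfx\in\Omg$, using also the reflection $\Omg_-$) into the moment identity, exchange the order of integration, and write the quantity $u^1(0,0)\int_\Omg x_2\,d\bfx-2W\int_\Omg x_2\,d\bfx$ as a single double integral over $\Omg\times\Omg$. After symmetrizing simultaneously in $\bfx\leftrightarrow\bfy$ and using the $x_1$-parity provided by Steiner symmetry, the desired inequality reduces to strict positivity of a specific kernel multiplied by the weight $x_2y_2$; the positivity of $y_2$ on $\Omg$ (forced by $\Omg\subset\bbR^2_+$) together with the symmetries of $\Omg$ is what makes the symmetrized kernel nonnegative with strict sign on an open subset of $\Omg\times\Omg$. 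This last kernel estimate is the main obstacle, because the analogous pointwise comparison before symmetrization fails at certain configurations; only the combined symmetrization allows the sign to be pinned down, and the strictness then comes from $|\Omg|>0$ and the nondegeneracy of the resulting integrand off a measure-zero set.
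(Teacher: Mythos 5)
The reductions in your first three paragraphs are sound: translation equivariance gives the time-invariance, the parity argument gives $u^2(0,(0,0))=0$, the folding formula $u^1(0,(0,0))=\frac{2}{\pi}\int_0^\ift \arctan(l(s)/s)\,ds$ is correct, and the moment identity $\int_\Omg x_2 u^1\,d\bfx = \frac{W}{2}\int_\Omg x_2\,d\bfx$ does follow from testing against $x_1x_2$ together with the Poho\v{z}aev identity (which the paper establishes in Lemma \ref{lem_WM=energy} using boundedness of the support). The gap is in your last paragraph, which is where the entire content of the theorem lives: ``after symmetrizing \dots the desired inequality reduces to strict positivity of a specific kernel'' is not a proof --- you neither exhibit the kernel nor establish its sign, and you concede that the unsymmetrized pointwise comparison fails. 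Moreover the inequality your scheme requires is genuinely delicate. Writing $u^1=u^1_l+u^1_{nl}$, where $u^1_{nl}$ is the velocity induced by the image $\Omg_-$, your target $u^1(0,(0,0))\int_\Omg x_2\,d\bfx > 4\int_\Omg x_2u^1\,d\bfx$ decomposes, using $u^1(0,(0,0))=2u^1_{nl}(0,0)$ and $\int_\Omg x_2u^1_l\,d\bfx=-\frac{1}{4\pi}\iint_{\Omg\times\Omg}(x_2-y_2)^2|\bfx-\bfy|^{-2}\,d\bfx d\bfy\le 0$, into the requirement that $u^1_{nl}(0,0)\int_\Omg x_2\,d\bfx$ be at least $2\int_\Omg x_2u^1_{nl}\,d\bfx$ --- and for a patch concentrated near a point $(0,h)$ this is asymptotically an \emph{equality}, so the claimed ``strict sign on an open subset'' cannot come from the image-interaction kernel alone and would have to be extracted from the local correction term. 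None of this is carried out.

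The paper takes a different and less fragile route, which I recommend. Instead of the impulse-weighted identity, it uses the mass-weighted identity $W\,|\Omg| = \int_\Omg u^1\,d\bfx = \int_\Omg u^1_{nl}\,d\bfx$, obtained from $\frac{d}{dt}\int x_1\tht\,d\bfx$ in the moving frame (Remark \ref{rmk_W formula}); the self-interaction term drops out by antisymmetry of the kernel under $\bfx\leftrightarrow\bfy$, so $W$ is the plain average over $\Omg$ of the image-induced velocity $u^1_{nl}$ alone. Steiner symmetry then shows, by two elementary slicing computations (Claims 1 and 2 in Lemma \ref{lem_ central speed > 2W }), that $u^1_{nl}$ is even and decreasing in $|x_1|$ for each fixed $x_2\ge 0$ and strictly decreasing in $x_2$ along $x_1=0$, hence attains its strict maximum over $\overline{\bbR^2_+}$ at the origin; therefore $W< u^1_{nl}(0,0)$ and $u^1(0,(0,0))=2u^1_{nl}(0,0)>2W$. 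The factor of $2$ comes for free from the doubling of the image contribution on the axis, rather than having to be wrung out of a weighted average of the full velocity.
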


\begin{rmk}\label{rmk_main thm}[Proof of Theorem \ref{Main thm 1} (i)--(ii) assuming Theorem \ref{Main thm: variational method} and Theorem \ref{Main thm: central speed}] 
    By Theorem \ref{Main thm: variational method}, there exists a function $\omg$ having the odd symmetry with respect to $x_1$-axis such that we have $\int_{\bbR^2_+}x_2\omg\q d\bfx=1$ and the relation $$\omg=\bfone_{\set{\psi[\omg]-Wx_2>0}}\qd\mbox {a.e. in }\bbR^2_+$$ where the set $\set{\q\psi[\omg]-Wx_2>0\q}\cap\bbR^2_+$ is symmetrically concentrated along $x_2$-axis. Note that $\omg$ solves \eqref{eq_steady Euler eq.} and generates a traveling wave solution of \eqref{eq_Euler eq.}. Then by Theorem \ref{Main thm: central speed}, we have 
     $$\Bigg(\f{\rd}{\rd x_2}\psi[\omg]\Bigg)(0,0)=u^1\Big(0,(0,0)\Big)>2W.$$  Due to the continuity of $\nb\psi[\omg]$ in $\bbR^2$, there exists $R>0$ such that $$\Bigg(\f{\rd}{\rd x_2}\psi[\omg]\Bigg)>2W\qd \mbox{ in }\q\q B_R(0,0):=\set{\q\bfx\in\bbR^2\q:\q |\bfx|<R\q}.$$ This implies that we obtain 
     $$B_R(0,0)\cap\bbR^2_+\q\subset\q \set{\q\psi[\omg]-Wx_2>0\q}\cap\bbR^2_+,$$ since the following inequality holds for each $\bfx\in B_R(0,0)\cap\bbR^2_+$:
    $$\psi[\omg](\bfx)=\ii{0}{x_2}\Bigg(\f{\rd}{\rd x_2}\psi[\omg]\Bigg)(x_1,s)\q ds\q>\q 2Wx_2\q>\q Wx_2.$$ Here we used the fact that $\psi[\omg](x_1,0)=0$ for any $x_1\in\bbR$, which is a consequence of the odd-symmetry of $\omg$. 
\end{rmk}
{
\begin{rmk}
Remark \ref{rmk_main thm} above   says that each $\omg\in S$ in Theorem \ref{Main thm: variational method} has its own \textit{touching} radius $R=R(\omg)>0$ in the sense that 
  $$B_R(-\tau,0)\cap\bbR^2_+\q\subset\q \set{\q\psi[\omg]-Wx_2>0\q}\cap\bbR^2_+,$$ where $\tau\in\mathbb{R}$ is determined by $\omg$ so that $\omg$ is even symmetric with respect to the line $x_1=-\tau$.  Since 
the uniqueness of maximisers (up to translations) is unknown, the radius $R=R(\omg)>0$ may vary for each  $\omg\in S$ of maximisers. However, we can get an estimate about $R$ which is \textit{uniform} on the  set $S$ of maximisers in the following sense: \ \\

  The upper bound $R\leq C_0$ is simply written in $(ii)$ in Theorem \ref{Main thm: variational method}. For a lower bound, 
    we first recall the log-Lipschitz continuity of $\nb\psi(\omg)$:
    $$\Big|\nb\psi(\bfx)-\nb\psi(\bfy)\Big|\q\leq \q C\nrm{\omg}_{L^1\cap L^\ift}|\bfx-\bfy|\log(1/|\bfx-\bfy|)
    \q \qd\mbox{ for }\q|\bfx-\bfy|\leq 1/2.$$
 Thanks to $\|\omg\|_{L^\infty}=1$ and $\|\omg\|_{L^1}\leq C_2$ from $(ii)$ of Theorem \ref{Main thm: variational method}, we can get
$$
   \Big|\nb\psi(\bfx)-\nb\psi(\bfy)\Big|\q\leq \q C |\bfx-\bfy|^{1/2}
    \q \qd\mbox{ for }\q|\bfx-\bfy|\leq 1/2,
$$    where $C>0$ is a universal constant. 
  We claim that for each $\omg\in S$, we have  $$R\geq R_0:= \min\{\frac{W^2}{C^2},\frac{1}{2}\}>0.$$ For a proof, we may assume $\tau=0$. Then from the estimate $\psi_{x_2}(0,0)>2W$, we get 
    $$\psi_{x_2}(\bfx)\geq \psi_{x_2}(0,0)-C|\bfx|^{1/2}>2W-W=W\qd\mbox{ for each }\bfx\in B_R(0,0)\cap\bbR^2_+,$$ which leads to $$\psi(\bfx)=\ii{0}{x_2}\Bigg(\f{\rd}{\rd x_2}\psi\Bigg)(x_1,s)\q ds\q>\q Wx_2\qd\mbox{ for each }\q \bfx\in B_R(0,0)\cap\bbR^2_+.$$ It gives $R\geq R_0>0$.  
\end{rmk}
 }

Lastly, Theorem \ref{Main thm: bdry} below implies Theorem \ref{Main thm 1} (iii). For its proof, see the proof of Theorem \ref{thm_boundry is continuous}.

\begin{thm}\label{Main thm: bdry}
    Let $S$ be the set of functions in Theorem \ref{Main thm: variational method}. Choose any $\omg\in S$ with the corresponding function $l:(0,\ift)\to[0,\ift)$. Then $l$ is continuous in $(0,\ift)$ and the limit $$\q\displaystyle\lim_{s\searrow0}l(s)\in(0,\ift)\q\quad\mbox{ exists.}$$ Moreover, there exists $L\in(0,\ift)$ such that, for each $\eps\in(0,L)$, we have $l\in C^{1,r}([\eps,L])\qd\mbox{ for each }r\in(0,1).$
\end{thm}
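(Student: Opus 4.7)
The plan is to read off the boundary of $\Omg$ directly from the super-level set $\{\Psi>0\}\cap\bbR^2_+$, where $\Psi:=\psi[\omg]-Wx_2$, combined with the Steiner symmetry from Theorem~\ref{Main thm: variational method}. Since $\omg\in L^\ift$ has compact support, standard elliptic estimates give $\psi[\omg]\in C^{1,\alp}_{\rm loc}(\bbR^2)$ for every $\alp\in(0,1)$. Writing $\psi[\omg](\bfx)=\int_{\bbR^2_+}[G(\bfx-\bfy)-G(\bfx-\bfy^*)]\omg(\bfy)\,d\bfy$ with $\bfy^*:=(y_1,-y_2)$ and $G=-(2\pi)^{-1}\log|\cdot|$, one checks directly that the half-plane kernel is strictly decreasing in $|x_1-y_1|$ for each fixed $x_2,y_2>0$; convolving against the symmetric-decreasing slices $\omg(\cdot,s)=\bfone_{|x_1|<l(s)}$ then yields that $x_1\mapsto\Psi(x_1,s)$ is even and \emph{strictly} decreasing in $|x_1|$ for each $s>0$. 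Thus $l(s)$ is characterized as the unique positive root of $\Psi(\cdot,s)=0$ whenever $\Psi(0,s)>0$, and $l(s)=0$ otherwise.

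Continuity of $l$ on $(0,\ift)$ follows from two one-sided bounds. Lower semicontinuity: for any $x_1<l(s_0)$ one has $\Psi(x_1,s_0)>0$ by strict monotonicity, and continuity of $\Psi$ propagates this positivity to $s$ near $s_0$, forcing $l(s)>x_1$. Upper semicontinuity: if $l(s_n)\to L>l(s_0)$ along some $s_n\to s_0$, then continuity gives $\Psi(L,s_0)=0$, but strict monotonicity and $L>l(s_0)\geq0$ produce $\Psi(l(s_0),s_0)>\Psi(L,s_0)=0$, contradicting the characterization of $l(s_0)$ from Step~1.

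For the limit at $0$, boundedness of $\Omg$ from Theorem~\ref{Main thm: variational method}(ii) gives $\limsup_{s\searrow 0}l(s)\leq C_0<\ift$. The decisive input from Theorem~\ref{Main thm: central speed} is $u^1(0,0)=\rd_{x_2}\psi[\omg](0,0)>2W$. Since oddness forces $\psi[\omg](\cdot,0)\equiv 0$, a uniform first-order expansion reads $\Psi(x_1,s)=s\bigl(u^1(x_1,0)-W\bigr)+o(s)$ on compact sets of $x_1$, so the continuity of $u^1(\cdot,0)$ gives $u^1(\cdot,0)-W>W/2>0$ on some $|x_1|\leq\dlt$, yielding $\liminf_{s\searrow 0}l(s)\geq\dlt>0$. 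To rule out oscillation, suppose $L_-:=\liminf<\limsup=:L_+$. Continuity of $l$ on $(0,\ift)$ together with the intermediate value theorem along sequences connecting $s_n^\pm\to 0$ with $l(s_n^\pm)\to L_\pm$ forces the attainment of every $L\in(L_-,L_+)$ along some sequence; dividing $\Psi(L,s_n)=0$ by $s_n$ and passing to the limit gives the overdetermined identity $u^1(L,0)=W$ on $(L_-,L_+)$, applied to the Poisson-type representation $u^1(x_1,0)=\pi^{-1}\int_\Omg y_2[(x_1-y_1)^2+y_2^2]^{-1}\,d\bfy$. I expect to close this by combining real-analyticity of $u^1(\cdot,0)$ off the projected touching set with a variational (Euler--Lagrange) comparison at the endpoint of the touching interval; this is the main obstacle of the proof.

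The $C^{1,r}$ regularity on $[\eps,L]$ is obtained by applying the implicit function theorem to $\Psi\in C^{1,\alp}$ at each boundary point $(l(s),s)$, provided $\rd_{x_1}\Psi(l(s),s)<0$. Strict monotonicity from Step~1 gives $\rd_{x_1}\Psi(l(s),s)\leq 0$; strict inequality follows from Hopf's lemma for the superharmonic equation $-\lap\Psi=1$ on $\Omg$ with $\Psi=0$ on $\rd\Omg$, using that Steiner symmetry plus the continuity and positivity of $l$ on $[\eps,L]$ supply an interior cone condition at $(l(s),s)$. The implicit function theorem then produces $l\in C^{1,\alp}([\eps,L])$, which is stronger than the stated $C^{1,r}$ for any $r\in(0,1)$.
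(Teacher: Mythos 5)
Your Steps 1 and 2 (strict decrease of $\Psi(\cdot,s)$ in $|x_1|$ and the two-sided semicontinuity argument for $l$) match the paper's proof of Theorem \ref{thm_boundry is continuous}, and the bounds $\liminf_{s\searrow0}l(s)>0$ and $\limsup_{s\searrow0}l(s)<\ift$ are obtained there the same way. The genuine gap is exactly where you flag it: ruling out oscillation of $l(s)$ as $s\searrow 0$. You correctly reduce to the overdetermined identity $u^1(L,0)=W$ for every $L\in[L_-,L_+]$, but the proposed closure via real-analyticity of $u^1(\cdot,0)$ off the touching set plus an Euler--Lagrange comparison is not carried out, and real-analyticity is in fact problematic precisely where $\overline{\Omg}$ meets the axis. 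The paper closes this step with an elementary observation you already have the tools for: writing $u^1(s,0)=\tld{\calG[\omg]}_{x_2}(s,0)=\pi^{-1}\int_A\int_{-l(y_2)-s}^{l(y_2)-s}y_2(y_1^2+y_2^2)^{-1}\,dy_1\,dy_2$, each inner integral is a fixed-length window sliding away from the peak of an even unimodal profile, hence strictly decreasing in $s\geq0$ --- the same mechanism as your Step 1, applied to $u^1$ on the axis rather than to $\Psi$ on a horizontal slice. Strict monotonicity forces $u^1(\cdot,0)=W$ to have at most one root $a$, so every subsequential limit of $l(s)$ equals $a$ and the limit exists (the mean value theorem in $x_2$, using $\psi[\omg](\cdot,0)\equiv0$, identifies each subsequential limit as a root).

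A secondary issue: for the implicit function theorem you need $\Psi_{x_1}(l(s),s)<0$ strictly, and your route through Hopf's lemma is not justified --- mere continuity of $l$ does not supply an interior cone (let alone ball) condition at $(l(s),s)$, since $l$ could a priori have cusps in the $x_2$-direction, where Steiner symmetry gives no monotonicity. The paper instead differentiates under the integral and shows $\calG[\omg]_{x_1}(\bfx)<0$ pointwise for every $x_1>0$, by comparing $F(l(y_2)-x_1)$ with $F(-l(y_2)-x_1)$ for an even primitive $F$ that is strictly increasing in $|s|$; your own kernel-monotonicity claim from Step 1, pushed to the level of the derivative rather than the function, gives this directly and makes the Hopf detour unnecessary.
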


\subsection{Key ideas}\label{subsec_ideas}  \q 

\noindent Let us describe the key ideas that are involved in the proof of the main theorem.

\medskip 

\noindent\textbf{Steady Euler equations and vorticity function}. 
We recall the steady Euler equation \eqref{eq_steady Euler eq.} in $\bbR^2_+$, for an arbitrary traveling speed $W>0$. This equation allows a solution of the form $$\omg=f(\Psi)\qd\mbox{ in }\q\bbR^2_+$$ for a function $f$ called a \textit{vorticity function} where the adjusted stream function $\Psi$ is given by 
$$\Psi(\bfx):=\psi[\omg](\bfx)-Wx_2-\gmm\qd\mbox{ for some constant }\gmm\geq0$$ with the relation $\bfU=\nb^\perp\Psi$. The constant $\gmm$ is referred to as a \textit{flux constant}. We extend the odd-symmetric solution $\omg$ of \eqref{eq_steady Euler eq.} as
 $$\omg(x_1,-x_2)=-\omg(x_1,x_2)\qd\mbox{ for each } (x_1,x_2)\in\bbR^2_+.$$
In this paper, we put the Heaviside step function 
\begin{equation}\label{vort_heavi}
 f(\cdot)=\bfone_{\set{\cdot>0}},
\end{equation}
  {(which is the limit of 
$
f(t)=t^{1/(p-1)}_{+}
$ as $p$ goes to infinity\footnote{cf. Lamb--Chaplygin dipole for $p=2$ (e.g. see \cite{AC2019}).} )}
as the vorticity function to obtain an odd-symmetric vortex patch solution given by
\begin{equation}\label{eq_traveling vortex formula}
    \omg=f(\Psi)=\bfone_{\set{\psi[\omg]-Wx_2-\gmm>0}}\qd\mbox{in }\q\bbR^2_+.
\end{equation} In Theorem \ref{thm_maximizer}, we prove that such a solution exists as a maximizer of the kinetic energy among functions under some constraints on their impulse and mass. Each maximizer is symmetrically concentrated about $x_2$-axis in the upper half plane $\bbR^2_+$ (so-called \textit{Steiner symmetric about $x_2$-axis}; see Definition \ref{def_Steiner sym} for details).\\

\noindent\textbf{Touching implies zero flux constant ($\gmm=0$)}.
We here discuss by which conditions a solution \eqref{eq_traveling vortex formula} would be a Sadovskii vortex patch. We observe that any solution with positive flux constant $\gmm>0$ cannot touch $x_1$-axis. Indeed, if $\gmm>0$, then for all sufficiently small $x_2>0$ we have $$\bfx=(x_1,x_2)\q\notin\q\set{\q\bfx\in\bbR^2_+:\psi[\omg](\bfx)-Wx_2-\gmm>0\q},$$ due to  $\q\psi(x_1,x_2)\Big|_{x_2=0}\equiv0.\q$  In other words, for $\omg$ to generate a Sadovskii vortex patch, we should have $\gmm=0$. However, it is not clear that $\gmm=0$ is a sufficient condition for \textit{touching} of the dipole, as the distribution of the stream function $\psi-Wx_2$ is unknown.\\

\noindent\textbf{Zero flux constant ($\gmm=0$) implies touching when symmetrically concentrated about $x_2$-axis}. 
In this paper, we show that any vortex patch solution, which has zero flux constant and is symmetrically concentrated about $x_2$-axis, always touches $x_1$-axis. Unlike \cite{Burton96}(also see \cite[Section 6]{AC2019}) which used the moving plane method \cite{GNN} to produce a Lamb--Chaplygin dipole, we directly compare the distribution of $\psi$ and $Wx_2$. More specifically, we want to compare the traveling speed $W>0$ and the horizontal velocity $u^1=\psi_{x_2}[\omg]$ to guarantee that $$\psi_{x_2}-W>0\qd\mbox{ near the origin},$$ so that we get $$B\cap\bbR^2_+\q\subset\q\set{\q\psi-Wx_2>0 \q}$$ for some ball $B\subset\bbR^2$ centered on $x_1$-axis. For the comparison, we first split the dipole as $$\omg=\bfone_A-\bfone_{A_-}=\omg_++\omg_-,\qd 
\bfu=\nb^\perp\psi[\omg_+]+\nb^\perp\psi[\omg_-]$$ where $\omg_+$ is the nonnegative (upper) part of $\omg$ whose support is in $\bbR^2_+$, and obtain the following relation that, on $x_1$-axis, 
$$u^1=2\psi_{x_2}[\omg_-]$$ thanks to the symmetry between $A$ and $A_-$. We may observe that the eddy flow generated by the lower part $\omg_-$ makes the upper part $\omg_+$ propagate in the direction $(W,0)$, and vice versa. That is, the traveling speed $W$ is the average value of the nonlocal velocity $\psi_{x_2}[\omg_-]$ in the set $\supp(\omg_+)\subset\overline{\bbR^2_+}$, i.e. we may compare the traveling speed given by $$W=\fint_{\supp(\omg_+)}\psi_{x_2}[\omg_-]\q d\bfx$$ and the horizontal velocity on $x_1$-axis, given by $\q u^1(x_1,0)=2\psi_{x_2}[\omg_-](x_1,0),\q$ if we can estimate the distribution of the nonlocal velocity $\psi_{x_2}[\omg_-]$ in the domain $\overline{\bbR^2_+}$. In Lemma \ref{lem_ central speed > 2W }, we will prove that the nonlocal velocity $\psi_{x_2}[\omg_-]$ has certain monotonicity and attains its unique maximum at the origin among all points in $\overline{\bbR^2_+}$ if the set $\supp(\omg)$ is symmetrically concentrated about $x_2$-axis. Therefore we obtain $$u^1(0,0)>2W,$$ which means that the speed at the center of the dipole is more than twice as fast as the traveling speed of the entire dipole. Therefore, $\gmm=0$ is an equivalent condition for \textit{touching}, when the dipole vortex is symmetrically concentrated about $x_2$-axis. From now on, we introduce the variational method to obtain a solution with $\gmm=0$. \\

\noindent\textbf{Variational problem on exact impulse and mass upper bound}. 
It is classical to use a variational method when seeking a steady solution to the Euler equations. In this paper, in some admissible set, we shall find a maximizer $\omg\in L^1(\bbR^2)$ of the kinetic energy $$E(\omg)=\f{1}{4}\int_{\bbR^2}|\bfu|^2\q d\bfx,\qd \bfu=k*\omg.$$ Given a triple of parameters
$$\big(\mu,\q\nu,\q\lmb\big)\qd\mbox{ where }\mu,\nu,\lmb>0,$$ the admissible set
$K_{\mu,\nu,\lambda}$
 will be taken as the set of functions having the simple forms $\omg=\lmb\cdot(\bfone_A-\bfone_{A_-})$ for some $A\subset\bbR^2_+$, with the prescribed impulse condition $$\int_{\bbR^2}x_2\omg\q d\bfx=\mu$$ and the upper bound of mass $$\nrm{\omg}_{L^1(\bbR^2)}\leq \nu.$$ Here, $\lmb$ is the strength of the vortex patch. 
We denote the set of maximizers as $S_{\mu,\nu,\lmb}$ and the maximal energy as $I_{\mu,\nu,\lmb}$ (see Remark \ref{rmk_prop.of.G} for detailed discussions). The simple scaling argument of Remark \ref{rmk_prop.of.G}-(v) says that we can kill two parameters of the triple $(\mu,\nu,\lmb)$. For instance, the variational problem of parameters $(\mu,\nu,\lmb)$ is equivalent to the problem of $(M,1,1)$ where $M=\lmb^{\f{1}{2}}\nu^{-\f{3}{2}}\mu$. Indeed, the sets $S_{\mu,\nu,\lmb}$ and $S_{M,1,1}$ are bijective under some scaling map. Therefore we may assume $\nu=\lmb=1$ and regard the impulse $\mu$ as the only parameter of our variational problem.\\

\noindent\textbf{Existence of maximizer}. 
Theorem \ref{thm_maximizer} says that, in the variational problem of the parameter $(\mu,1,1)$, there exists a maximizer, and any maximizer $\omg\in S_{\mu,1,1}$ has the form \eqref{eq_traveling vortex formula}: 
$$\omg=\bfone_{\set{\psi[\omg]-Wx_2-\gmm>0}}\qd\mbox{ a.e. in }\q\bbR^2_+\qd\mbox{ for some constants }\q W>0,\q\gmm\geq0$$ with the odd-symmetry in $\bbR^2$. The proof consists of several steps: \\

\noindent(i) In Lemma \ref{lem_existence of maximizer in larger set}, we prove the existence of maximizers having the form of a simple function $\bfone_A-\bfone_{A_-}$. We first find a weak subsequential limit $\omg'\in L^2(\bbR^2)$ of an energy-maximizing sequence $\omg_n$ in the larger admissible set: the set of odd-symmetric bounded functions $\omg$ satisfying that $0\leq\omg\leq1$ in $\bbR^2_+$  with the upper bounds of impulse and mass $$\int_{\bbR^2}x_2\omg\q d\bfx\leq\mu,\qd \nrm{\omg}_{L^1(\bbR^2)}\leq1.$$ We may assume that $\omg'$ and $\omg_n$ satisfy the Steiner symmetrization condition, since we have $$E(\omg^*)\geq E(\omg)$$ if $\omg^*$ is the Steiner symmetrization of $\omg$ about $x_2$-axis. We will see that the sequence of kinetic energy $E(\omg_n)$ converges to $E(\omg')$, implying that $\omg'$ is a maximizer of kinetic energy in the larger admissible set. Then we prove that $\omg'$ has the full impulse $\int_{\bbR^2}x_2\omg'\q d\bfx=\mu$ and has the form $\omg'=\bfone_A$ in $\bbR^2_+$.\\

\noindent(ii) Proposition \ref{prop_maximizer is TW} says that each maximizer $\omg$ solves \eqref{eq_steady Euler eq.} in the sense that, for some constant $W,\gmm\geq0$, the relation \eqref{eq_traveling vortex formula} holds. Knowing that $\omg$ is a simple function $\bfone_A-\bfone_{A_-}$ for some measurable set $A\subset\bbR^2_+$, we first prove that there exist some constant $W,\gmm\geq0$ such that $$A=\set{\q\bfx\in\bbR^2_+:\psi[\omg](\bfx)-Wx_2-\gmm>0\q}.$$ For this, we generate a perturbation of $\omg=\bfone_A-\bfone_{A_-}$ in the larger admissible set. We shall construct a function $\eta\in L^\ift_c(\bbR^2_+)$ satisfying that $$\eta\leq0\q\mbox{ in }A,\qd \eta\geq0\q\mbox{ in }A^c,\qd\mbox{and }\int_{\bbR^2_+}\eta=\int_{\bbR^2_+} x_2\eta=0.$$ Then for small $\eps>0$, we have $0\leq\q\bfone_A+\eps\cdot\eta\q\leq1$, and the function $\q\bfone_A+\eps\cdot\eta\q$ has the same mass and impulse with $\bfone_A$. The perturbed kinetic energy $E_\eps$ of the odd-extension of $\q(\bfone_A+\eps\eta)\q$ is maximized at $\eps=0$ so as to $$\f{d}{d\eps}E_\eps\Big|_{\eps=0}=\int_{\bbR^2_+}\psi[\omg]\cdot\eta\q d\bfx\leq0.$$ We will use this inequality to prove that $A=\set{\psi[\omg]-Wx_2-\gmm>0}$ for some constants $W,\gmm$, and the construction of $\eta$ is the key part of the proof. We will define $\eta$ so that the above inequality gives 
$$\int_{\bbR^2_+}(\psi[\omg]-Wx_2-\gmm)\q h\q d\bfx\leq0$$ for an arbitrary function $h$ satisfying that $h\leq0$ in $A$ and $h\geq0$ in $A^c$, which would eventually gives us the result. Meanwhile, the emergence of the constants $W,\gmm$ from the formula of $\eta$ are necessary. However, we observe that $W,\gmm$ are determined by the boundary points of the set $\set{\psi[\omg]-Wx_2-\gmm>0}$ (see \eqref{eq_W,gmm are unique}), while we yet only know that $A$ is a measurable set. It means that the classical definition of (topological) boundary points is not applicable, and we shall define boundary-like points of general measurable sets which are called \textit{the exceptional points} (see Definition \ref{def_excep.pt}). This approach was developed in \cite{Stability.of.Hill.vortex}.  \\

\noindent\textbf{Small impulse implies $\gmm=0$}. 
For the variational problem with parameters $(\mu,1,1)$, Lemma \ref{lem_small impulse to gmm 0} and the estimate \eqref{eq_small impulse-mass} in Lemma \ref{lem_ests for small impulse} guarantee that, if $\mu$ is small enough, then we can get $\gmm=0$, i.e. \textit{touching}, and prove that maximizers cannot achieve the full mass. It confirms our heuristic intuition for the case of $\lmb=1$: if the impulse imposed on the maximizers is small enough compared to the mass, each maximizer would rather be concentrated in a bounded region close to the axis than be stretched widely and thin, allowing for mass loss. On the other hand, one may infer that if the impulse dominates over the mass sufficiently, the maximizers would move away from the axis, that is, $\gmm>0$ will be large enough.  \\

\noindent\textbf{Estimates using small impulse: admissible set without mass bound}. 
Another result of dealing with small impulse, in addition to \textit{touching}, is that we can find maximizers of the variational problems where the mass bound is omitted. In the below, we explain the reason why obtaining the following estimate: 
$$\sup_{\omg\in S_{\mu,1,1}}\nrm{\omg}_{L^1}\q\lesssim\q\mu^{2/3}\qd\mbox{ for small }\mu>0$$ is sufficient to guarantee the existence of the maximizer of the variational problem without mass bound. The above estimate follows from \eqref{eq_small impulse-W} in Lemma \ref{lem_ests for small impulse}, and we will shortly give the key idea of the estimate.   \\

\noindent(i) \textit{Variational problem without mass bound and prior estimate: } We first consider the variational problem of maximizing the kinetic energy in an admissible set given as the set of $L^1$ simple functions $\omg=\bfone_A-\bfone_{A_-}$ having the constraint only on the impulse $$\int_{\bbR^2}x_2\omg\q d\bfx=\mu\qd\mbox{ for }\q\mu>0,$$  where the set of its maximizer functions is denoted by $S_{\mu,\ift}$, and the maximal energy is denoted by $I_{\mu,\ift}$. 
Assuming that the set $S_{1,\ift}$ is nonempty, we can obtain $\omg\in S_{\mu,\ift}$ from each $\omg'\in S_{1,\ift}$ by the scaling $\omg(\cdot)=\omg'(\mu^{-1/3}(\cdot)),$ and get the priori estimate 
\begin{equation}\label{eq_priori.est.}
    \nrm{\omg}_{L^1}\sim\mu^{2/3}.
\end{equation}

\noindent(ii) \textit{Difficulty: } It is nontrivial that the set $S_{1,\ift}$ is nonempty because it is not clear that $$I_{\mu,\ift}<\ift.$$ Indeed, the kinetic energy is bounded using $\nrm{\omg}_{L^1}$ from the inequality \eqref{eq_energy:finite}:
$$E(\omg)\q\lesssim\q \nrm{\omg}_{L^1}\nrm{\omg}_{L^2}^{1/2}\Big(\int_{\bbR^2}x_2\omg\q d\bfx\Big)^{1/2},$$
and it is not clear that $$\sup_{\omg\in S_{1,\ift}}\nrm{\omg}_{L^1}<\ift.$$ 
Although the kinetic energy of each function in the admissible set is finite, the maximal energy may be infinite, and there might not be any maximizer in the admissible set. \\

\noindent(iii) \textit{Small impulse estimate guarantees existence of maximizers: }
In this paper, in Lemma \ref{lem_ests for small impulse}, we  obtain the estimation : for small $\mu>0$,  
\begin{equation}\label{eq_intro_mass bound}
    \sup_{\omg\in S_{\mu,1,1}}\nrm{\omg}_{L^1}\q\lesssim\q\mu^{2/3}.
\end{equation} 
One may notice that this estimate coincides well with the priori estimate. From now on, we explain how the inequality above would imply that the set $S_{1,\ift}$ is nonempty (For the formal proof, see Section \ref{sec_ no mass bound}). In the meantime, we will observe that the power $2/3$ is the critical number to solve the variational problem. First, to prove $S_{1,\ift}\neq\emptyset$, it suffices to show that there exists $\nu_1>0$ such that 
\begin{equation}\label{eq_energy stop increasing}
    S_{1,\nu,1}=S_{1,\nu_1,1}\qd\mbox{ for any }\q\nu>\nu_1.
\end{equation} It means that, for fixed impulse, any energy maximizer does not attain arbitrarily large mass, and that the maximal energy $I_{1,\nu,1}$ stops increasing for large $\nu>0$. It would imply that 
$I_{1,\ift}=I_{1,\nu_1,1}<\ift$ and therefore $S_{1,\nu_1,1}\subset S_{1,\ift}\neq\emptyset$. To show \eqref{eq_energy stop increasing}, we need to show that, for some $\nu_1>0$ and each $\nu>\nu_1$, it holds that $$\sup_{\omg\in S_{1,\nu,1}}\nrm{\omg}_{L^1}\leq\nu_1,$$ or equivalently $$\sup_{\omg\in S_{\nu^{-3/2},1,1}}\nrm{\omg}_{L^1}\q\leq\q\f{\nu_1}{\nu}.$$ If we put $\mu:=\nu^{-3/2}$, we obtain the inequality $$\sup_{\omg\in S_{\mu,1,1}}\nrm{\omg}_{L^1}\q\leq\q\nu_1\cdot\mu^{2/3}$$ which must holds for small $\mu>0$. Therefore, we note that the estimate \eqref{eq_intro_mass bound} is strong enough to prove that the set $S_{1,\ift}$ is nonempty. \\

\noindent (iv) \textit{To obtain \eqref{eq_intro_mass bound} for small impulse: }
In Lemma \ref{lem_ests for small impulse}, we obtain \eqref{eq_intro_mass bound} by several steps. We sketch the strategy of the proof here: for each $\omg\in S_{\mu,1,1}$ with the traveling speed $W>0$ and the flux constant $\gmm\geq0$, we have $$\nrm{\omg}_{L^1}\leq\big|\q\set{\bfx\in\bbR^2_+:\psi[\omg](\bfx)>Wx_2}\q\big|$$ where $|\cdot|$ denotes the (Lebesgue) measure in $\bbR^2$. One may find it inevitable to find the lower bound of $W$ in terms of small $\mu>0$ for estimating the measure of the set $\set{\q\psi[\omg]>Wx_2\q}$, knowing that function $\psi[\omg]$ can be controlled by impulse $\mu$ (see Lemma \ref{lem_est.of.G} and Lemma \ref{lem_gw:estiate}). We can show that $$\mu^{1/3}\lesssim W$$ which follows by the Poho\v{z}aev identity \eqref{eq_energy and W} in Lemma \ref{lem_WM=energy} (see \cite{Burton.1988}):
$$I_{\mu,1,1}=\f{3}{4}W\mu+\f{1}{2}\gmm\nrm{\omg}_{L^1}\qd(\mbox{ where we know }\gmm=0\q\mbox{ for small }\mu\q)$$  and by the lower bound estimate of maximal energy $$I_{\mu,1,1}\gtrsim \mu^{4/3}\qd(\mbox{ thanks to a scaling })$$ given in Lemma \ref{lem_lower bound of energy}. \\


 \subsection{Further technical discussions} \label{subsec_variational} \q
  
\medskip
 
\noindent \textbf{Variational methods for vortex dipoles}.
A vortex dipole is a symmetric pair of vortices with opposite signs that move along the symmetry axis. This type of dipole is often referred to as a \textit{counter-rotating vortex pair}. It is a model of coherent vortex structures in large-scale geophysical flows (for experimental works, see  \cite{VF89}, \cite{FV94} and references therein). The existence via variational method goes back to  Kelvin in 1880 \cite{kelvin1880}, Arnold \cite{Arnold66} in 1966, and Benjamin in 1976 \cite{Ben76}.
 In particular, we consider the \textit{vorticity method},  a variational principle based on the \textit{vorticity}. This method for vortex pairs was further developed by Turkington \cite{Tu83} and Burton \cite{Burton.1988}.
 We refer to the work of Norbury \cite{Norbury75} and Yang \cite{Yang91} for the stream function method.\\

Our variational setting is based on the vorticity method of Friedman--Turkington \cite{FT81} in 1981  which was developed for vortex rings. Following \cite{FT81}, we consider the class of vorticity \begin{equation*}
	\begin{split}
		\int_{\mathbb{R}^2_+}\omega \quad \mbox{\textit{is smaller than or equal to given } } \nu>0. 
	\end{split}
\end{equation*} After fixing $\nu$ (e.g. $\nu=1$), as in \cite{FT81, AC2019, Stability.of.Hill.vortex}, the maximizers for small impulse $\mu=\int_{\mathbb{R}^2_+}x_2\omega>0$ \textit{strictly lose} part of its mass (i.e. $\int_{\mathbb{R}^2_+}\omega<1$). Then we prove that the parameter $\nu$ on mass becomes unnecessary. In other words, the maximizers are characterized by a single constraint on impulse (when small).
Our Sadovskii vortex patch is obtained as such a maximizer in $K_{\mu,1,1}$ (i.e. $\omega\in S_{\mu,1,1}$ for $\mu\ll 1$). Thus the  Sadovskii   patch can be considered as the limiting   case $\mu\to 0$ of the one parameter family 
$\mu>0$.  \\

\noindent \textbf{Approximating the point vortex dipole}.
The opposite limit ($\mu\to\infty$ in $S_{\mu,1,1}$) can be understood in the following sense.
By scaling, we observe 
$$
\omega\in K_{\mu,1,1} \iff \hat{\omega}\in K_{1,1,(1/\varepsilon^2)}
$$ via  the transform
$$\hat{\omega}(\bold{x})=\frac{1}{\varepsilon^2}\omega(\frac{\bold{x}}{\varepsilon})\quad \mbox{with} \quad \mu=1/\varepsilon.
$$
The class $K_{1,1,(1/\varepsilon^2)}$ consists of 
patches of strength $1/\varepsilon^2$ with the unit vertical center of mass ($
 1=\int_{\mathbb{R}^2_+}x_2\omega/\int_{\mathbb{R}^2_+} \omega
$). 
As $\varepsilon$ goes to $0$, any sequence $\{\omega_\varepsilon\}$ from the class $K_{1,1,(1/\varepsilon^2)}$   converges to the Dirac point mass: in other words, 
\begin{equation}\label{eq_point_mass}
\omega_\varepsilon \longrightarrow 
\delta_{(0,1)}-\delta_{(0,-1)}
\quad \mbox{in}\quad \mathbb{R}^2.
\end{equation}
The study of approximations to the point dipole, known as the \textit{desingularization problem}, has been a subject of interest and research for a long time (e.g. see  the classical book \cite{Lamb} of Lamb).
We refer to \cite{Tu83, Yang.Kubota.1994, MR2729322, MR3607460, MR4295232} and references therein. In general, for small $\varepsilon$, the boundary is smooth and closed to a circle, {with the very recent paper \cite{CQZZ.2025} settling the uniqueness in this regime}. For an analogue in vortex ring, see the Helmholtz's rings of small cross-section \cite{Helm1858}.  One may wonder interaction between vortex dipoles/rings. For instance, we refer to \cite{MR2012616, https://doi.org/10.1002/cpa.22199, Hass_leap} and references therein for leapfrogging phenomena.  

\medskip

\noindent\textbf{Lamb--Chaplygin dipole and Hill's spherical vortex}.
While we do not know the exact shape of dipoles including our Sadovskii vortex patch, there are other interesting examples  which can be explicitly written. One is the Lamb--Chaplygin dipole \cite{Lamb} while the other is the Hill's spherical vortex \cite{Hill}. We also refer to \cite{Hicks}, \cite{Moffatt}, \cite{Abe_swirl} and  references therein. The \textit{Lamb--Chaplygin dipole}  is an exact solution to \eqref{eq_steady Euler eq.} with zero flux constant $\gmm=0$ and the vorticity function given by the Lipschitz function 
 \begin{equation}\label{eq_vorticity.fct.of.Lambdipole}
      f(\cdot)=\max\set{\cdot,0}
 \end{equation}
(see \cite{Lamb}). 
Its support is simply a closed ball centered at the origin, and the dipole is symmetrically placed with opposite signs with respect to $x_1$-axis.  Thanks to the structure of the vorticity function \eqref{eq_vorticity.fct.of.Lambdipole}, the moving plane method \cite{GNN} can be applied to characterize the dipole. A variational characterization of the dipole can be found in \cite{Burton96, Burton05b} (also see \cite{AC2019} for stability). For Hill's spherical vortex, we refer to \cite{Stability.of.Hill.vortex} and references therein.

\medskip
\noindent \textbf{Stability questions}.
 The variational method produces an extremizer of a conserved quantity in an appropriate admissible class. Since Arnold's work {\cite{Arnold66}}, it is quite classical to deduce (Lyapunov) stability  for steady solutions  characterized by critical points of the energy (also see the explanation in \cite{MR4713111}). The simplest example would be the circular patch (Rankine vortex), whose stability was rigorously proved by \cite{WP85}. (Strictly speaking, patch solutions are not regular enough for Arnold's argument to be directly applicable, cf. \cite{SV09, MR4417385}.)    
 For vortex pairs, orbital stability was first obtained in \cite{BNL13} for a certain solutions by the vorticity method. Stability of  Lamb--Chaplygin dipoles was  proved in    \cite{AC2019} (also see \cite{Wang.2024, MR4350517}).  The recent paper of Wang \cite{Wang.2024}, which is a refinement of Burton \cite{Burton.2021}, proved stability an approximation for point vortex \eqref{eq_point_mass}.
We also refer to \cite{MR3894915, MR4565676} and references therein.  All the stability results are restricted on  \textit{symmetric} perturbations. In general,  non-symmetric disturbances may increase the corresponding energy as explained in \cite[p2342]{Saffman.Szeto.1980}. 
Lastly,  uniformly rotating patch   solutions are commonly referred to as ``$V$--states''. For existence, rigidity and stability, we refer to  \cite{Burbea1982,  Wan86,Tang, HMV, CCG,MR4312192, MR4493276} and references therein.   \\

 
  \noindent\textbf{Vortex atmosphere is touching}.
  In the steady Euler equations \eqref{eq_steady Euler eq.}, we consider a solution $\omg$ satisfying the Steiner symmetry condition and having a form $\bfone_A-\bfone_{A_-}$ where $$A=\set{\q\psi[\omg]-Wx_2-\gmm>0\q}$$ for some constant $W>0,\q \gmm\geq0$. Then each streamline $\set{\q\bfx\in\overline{\bbR^2_+}:\Psi(\bfx)=C\q}$, generated by the adjusted stream function $$\Psi:=\psi[\omg]-Wx_2-\gmm$$ and characterized by $C\in\bbR$, is a integral curve of the flow velocity. We note that if a single streamline forms a closed curve and surrounds any bounded subset of $\overline{\bbR^2_+}$, the set travels along with the traveling vortex patch without change in its profile. By the \textit{vortex atmosphere} we mean the union of all such subsets of $\bbR^2_+$. One may observe that the set $\set{\bfx\in\bbR^2_+:\psi(\bfx)-Wx_2>0}$ is the vortex atmosphere from the observations that the set $\set{\bfx\in\bbR^2_+:\psi(\bfx)-Wx_2-C>0}$ is not bounded for each $C<0$, and is bounded for $C\geq0$ (see the proof of Lemma \ref{lem_w:cpt_supp}). As we can prove that the set $\set{\psi-Wx_2>0}$ touches the axis of symmetry (regardless of whether $\gmm>0$ or $\gmm=0$), we may conclude that any vortex atmosphere touches the axis of symmetry even when the flux constant $\gmm$ of the solution is positive. In other words, any traveling dipole vortex patch which is symmetrically concentrated about $x_2$-axis carries its certain neighborhood fluid including a part of the axis of symmetry.  \\


 \subsection{Open problems}\label{subsec_open} \q

  \noindent We clarify several open problems regarding Sadovskii vortex.
 
 \medskip 
 
 \noindent \textbf{Detailed shape}. Due to limitations of the vortex method, unfortunately we are unable to determine many conjectured properties on the shape of the Sadovskii vortex patch. 
 
 \begin{con}
 	The bounded open set $\Omg$ defining a Sadovskii vortex patch (as described in Theorem \ref{Main thm 1}) is connected, convex, and makes the right angle with the horizontal axis. 
 \end{con}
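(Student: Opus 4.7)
The plan is to attack each of the three claimed properties --- connectedness, convexity, and the right-angle condition at the touching points --- by distinct methods, while acknowledging that the authors explicitly flag this as a conjecture and that a genuinely complete proof is likely out of reach of the current variational framework alone.

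First I would address connectedness of $\Omg$ by a variational competitor argument. Suppose for contradiction that $\Omg$ decomposes into disjoint open pieces $\Omg_1,\Omg_2$ in $\bbR^2_+$. By the Steiner symmetry from Theorem \ref{Main thm: variational method}(ii), each piece is $x_1$-symmetric and the two pieces are vertically stacked. Constructing a competitor by translating the higher piece downward toward $\bfx=(0,0)$ and then rescaling to restore the impulse normalization $\int x_2\,d\bfx=1$, the self-interaction energy should strictly increase by a rearrangement-monotonicity argument for convolutions with $-\log|\cdot|$, and the interaction with the lower copy $\Omg_-$ also grows strictly; the sum contradicts the maximality asserted by Theorem \ref{Main thm: variational method}. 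The subtle point is that the rescaling perturbs the dipole interaction globally and one must track both effects quantitatively to rule out cancellation.

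Next, for the right-angle property at the touching endpoints $x_1=\pm l(0)$, I would perform a local asymptotic analysis of $\Psi:=\psi[\omg]-Wx_2$ near such a point. Because $\gmm=0$ (so $\Psi$ vanishes both on $\rd\Omg$ and on the segment of $\rd\bbR^2_+$ inside $\overline{\Omg}$), Schwarz reflection across $x_1$-axis via the odd symmetry of $\omg$ turns a corner of opening $\alp$ at the touching point into an interior corner of opening $2\alp$ for the planar problem $-\lap\Psi=\bfone_{\text{wedge}}$. Standard singularity analysis of this corner problem forces the homogeneous leading term in the expansion to carry an exponent that produces an unbounded velocity $\nb^\perp\Psi$ (equivalently, a vortex sheet) unless $2\alp=\pi$. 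The obstacle is that Theorem \ref{Main thm: bdry} only gives $C^{1,r}$ regularity on $[\eps,L]$ for $\eps>0$, so one would first need a separate boundary regularity result up to and including the touching endpoint to even speak of a well-defined tangent angle there.

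Convexity of $\Omg$ is the deepest of the three, and I expect it to be the principal obstacle. The natural route is to prove that all superlevel sets $\{\Psi>c\}$ are convex, in the tradition of Gabriel--Lewis--Korevaar convexity-maximum-principle arguments for semilinear elliptic PDE. However, $-\lap\Psi=f(\Psi)$ with $f$ the Heaviside function of \eqref{vort_heavi} is not covered by the classical theory because $f$ is discontinuous. The natural remedy is to approximate by the semilinear problems $-\lap\Psi_p=f_p(\Psi_p)$ with $f_p(t)=t_+^{1/(p-1)}$, prove convexity for the approximating touching dipoles --- whose very existence is itself not established in a form usable here --- and then pass $p\to\ift$ while preserving convexity under weak limits. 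Steiner symmetry about $x_2$-axis alone does not imply convexity, so additional global monotonicity of $\Psi$ along horizontal rays would need to be extracted, and it is precisely this limiting convexity step that I expect to fail any straightforward treatment, leaving the statement at the level of a conjecture.
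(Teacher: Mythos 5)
The statement you are asked to prove is labeled as a \emph{Conjecture} in the paper, and the paper contains no proof of it; the authors themselves only record the obstacles (the discontinuity of the vorticity function $f=\bfone_{\{\cdot>0\}}$ blocks the standard connectedness arguments for variational maximizers, and the boundary function $l$ is only known to be continuous up to $s=0$, so the touching angle is not even known to be well defined). Your proposal is accordingly a research program rather than a proof, and you are right to say so; to that extent your diagnosis of the difficulties matches the paper's own discussion almost point for point. But none of your three attacks closes its gap, so nothing here can be accepted as a proof of the statement.

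Two concrete soft spots beyond the ones you already flag. For connectedness, the monotonicity you invoke points the wrong way: by Remark \ref{rmk_prop.of.G}(iii) and the form of the half-plane Green's function $G(\bfx,\bfy)=\frac{1}{4\pi}\log\bigl(1+\frac{4x_2y_2}{|\bfx-\bfy|^2}\bigr)$, translating vorticity \emph{toward} the axis strictly \emph{decreases} its self-energy (the factor $x_2y_2$ drops while $|\bfx-\bfy|$ is unchanged), and the cross-interaction between the two components changes ambiguously (the pieces get closer, but $x_2y_2$ decreases); so the claimed strict energy increase before rescaling is unjustified, and the entire gain would have to be extracted from the impulse-restoring rescaling, which you do not quantify. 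For the right angle, the corner analysis after odd reflection does not single out $90$ degrees: the degenerate openings $0$ and $180$ degrees also avoid the singular leading term, which is exactly why the paper (citing \cite{EJSVP1}) states only the trichotomy $0$, $90$, or $180$ degrees even under the additional smoothness-up-to-the-boundary hypothesis; your sketch silently discards the two degenerate cases. The convexity discussion is an honest assessment of why the approximation-by-$f_p$ route is not available, and correctly leaves that part open.
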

 
 It seems that in general, one needs 
 certain regularity of the corresponding vorticity function
 in order to prove connectedness of maximizers from variational method    (e.g. see \cite{FB74, MR0560219}). Our discontinuous vorticity function $f$ in \eqref{vort_heavi} is not regular enough to deduce $\Omg$ consists of a single connected component. 
 
 Moreover, while several references claim to prove that the touching angle of $\Omg$ with the horizontal {axis} should be $90$ degrees using formal expansions of the velocity on the boundary (\cite{MR0734586,Yang91,Saffman.Tanveer.1982}), it is not even clear that the boundary function $l(s)$ given in Theorem \ref{Main thm 1} defines an angle; it might keep oscillating as $s\to 0^+$, although the size of the oscillations should go to zero from continuity of $l$ up to the boundary. However, under the assumptions that the angle is well-defined and $l$ is \textit{smooth up to the boundary} (not merely continuous), it can be made rigorous that the angle must be either $0, 90,$ or $180$ degrees (cf. \cite{EJSVP1}). We also remark that proving the existence of $V$--states (uniformly rotating vortex patches) having 90 degree corners seems to remain open (\cite{HMW,GaHa}).  
 
 Furthermore, the existence of Sadovskii vortex accompanying vortex sheets on its boundary seems to be a very difficult open problem. \\

 \medskip 
 
 \noindent \textbf{Variational properties}. There is a possibility that uniqueness still holds  for small impulse $\mu>0$ which would give stability for symmetric perturbations as in the case of Lamb--Chaplygin dipole \cite{AC2019} and Hill's vortex ring \cite{Stability.of.Hill.vortex}. Formally, we have the following conjecture. 
 
 \begin{con}
 	 There exists a universal constant $\mu^* > 0$ such that, for all $0<\mu\le\mu^*$, the set of maximizers $S_{\mu,1,1}$ coincides with $x_1$-translation and rescaling of a single patch, which then can be referred to as \textup{the} Sadovskii vortex patch. Furthermore, for all $\mu > \mu^*$, $S_{\mu,1,1}$ only contains patches which are separated from the horizontal axis. 
 \end{con}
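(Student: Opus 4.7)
The conjecture consists of two claims with genuinely different character, and the plan is to attack them separately. For the small-$\mu$ uniqueness, the first step is to reduce to a single rigidity question for $S_{1,\infty}$: by Lemma~\ref{lem_ests for small impulse} we have $\sup_{\omega\in S_{\mu,1,1}}\|\omega\|_{L^1}\lesssim \mu^{2/3}$, so the mass bound in $K_{\mu,1,1}$ is inactive for all sufficiently small $\mu$ and $S_{\mu,1,1}=S_{\mu,\infty}$; the scaling $\omega\mapsto\omega(\mu^{-1/3}\,\cdot\,)$ from Remark~1.3 then identifies $S_{\mu,\infty}$ bijectively with $S_{1,\infty}$ for every sufficiently small $\mu$. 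Thus the small-$\mu$ statement collapses to: \emph{modulo horizontal translation, $S_{1,\infty}$ is a singleton}, and the family over $\mu$ is literally obtained from one patch by the above rescaling. To prove singletonness, the approach is to take two $x_2$-axis Steiner-symmetric maximizers $\omega_1,\omega_2\in S_{1,\infty}$ with speeds $W_1,W_2$ and flux constants $\gamma_1=\gamma_2=0$, set $\phi=\psi[\omega_1]-\psi[\omega_2]-(W_1-W_2)x_2$, and exploit that $-\Delta\phi=\omega_1-\omega_2$ together with the sign linkage between $\omega_1-\omega_2$ and $\phi$ on the symmetric difference $\Omega_1\triangle\Omega_2$; combined with the Pohozaev identity from Lemma~\ref{lem_WM=energy} this should force $\phi\equiv 0$, along the lines of the rigidity templates for Lamb--Chaplygin dipoles \cite{Burton96, AC2019} but adapted to the Heaviside vorticity function.

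For the separation claim at large $\mu$, I would use the desingularization rescaling $\hat\omega(\bfx)=\epsilon^{-2}\omega(\bfx/\epsilon)$ with $\epsilon=1/\mu$ from the discussion around \eqref{eq_point_mass}. A direct change of variables shows $E(\hat\omega)=E(\omega)$ (the $\log\epsilon$ correction vanishes because $\int\omega=0$), so if $\omega\in S_{\mu,1,1}$ then $\hat\omega$ is a maximizer in the class $K_{1,1,\epsilon^{-2}}$ of patches of strength $\epsilon^{-2}$ with unit mass and unit vertical center of mass. As $\epsilon\to 0^+$, comparison with the point dipole forces $E(\hat\omega_\epsilon)\gtrsim |\log\epsilon|$, and standard concentration-compactness combined with the odd and Steiner symmetries constrains $\hat\omega_\epsilon$ to concentrate on a neighborhood of $\{(0,\pm 1)\}$ of diameter $o(1)$, cf.\ the desingularization analyses in \cite{CQZZ.2023, Wang.2024}. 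Consequently $\supp(\hat\omega_\epsilon)$ is bounded away from $\partial\bbR^2_+$ for all small $\epsilon$, which after undoing the rescaling gives $\supp(\omega)\subset\{x_2\gtrsim\mu\}$ and hence separation for every sufficiently large $\mu$; the threshold $\mu^*$ is then the supremum of $\mu$ admitting a touching maximizer, and its finiteness follows by contradiction with the previous sentence.

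The principal obstacle for the uniqueness half is the discontinuity of $f(t)=\bfone_{\{t>0\}}$: the moving-plane and Lipschitz implicit-function techniques that underlie the corresponding uniqueness theorems for Lamb--Chaplygin dipoles \cite{GNN} are not available, and any rigidity must be extracted purely from the free-boundary geometry, presumably using the regularity of Theorem~\ref{Main thm: bdry} together with a new sliding argument for patches. For the separation half, the hard step is upgrading the fixed-$\mu$ diameter bound of Theorem~\ref{Main thm: variational method} into a concentration estimate uniform along the $\epsilon$-family; in particular, one must rule out ``horn-shaped'' stretched maximizers whose vertical center of mass comes from a narrow tail reaching high altitude while the bulk of the mass remains near the axis, and such a priori control appears to lie beyond the techniques developed in the present paper. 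Both obstacles reflect the absence of a usable regularization of the Heaviside nonlinearity in the vortex-patch setting, and resolving the conjecture likely requires genuinely new rigidity tools tailored to free-boundary patch problems.
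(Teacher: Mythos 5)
This statement is Conjecture 2 in Subsection \ref{subsec_open}: the paper explicitly poses it as an \emph{open problem}, supported only by numerical evidence (no two distinct Sadovskii shapes have ever been computed) and by the analogy with the vortex-ring case settled in \cite{Stability.of.Hill.vortex}. There is consequently no proof in the paper to compare yours against, and your own closing paragraph concedes that both halves require ``genuinely new rigidity tools.'' What you have written is a research plan with admitted gaps, not a proof, and the gaps are substantive, as follows.

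For the uniqueness half, your reduction is sound: Lemma \ref{lem_ests for small impulse}, Lemma \ref{lem_abundant mass}, Theorem \ref{thm_ no mass cond}, and the scaling of Remark \ref{rmk_prop.of.G}(v) do identify $S_{\mu,1,1}$ with rescalings of $S_{1,\ift}$ for all small $\mu$. But the proposed rigidity computation collapses at its first step. By Theorem \ref{thm_ no mass cond}(iii) every $\omg\in S_{1,\ift}$ has the \emph{same} speed $W=\tfrac43 I_{1,\ift}$, so your $\phi$ is just $\psi[\omg_1]-\psi[\omg_2]$; on $\Omg_1\setminus\Omg_2$ one has $\psi_1-Wx_2>0\geq\psi_2-Wx_2$, hence $\phi>0$ exactly where $\omg_1-\omg_2=1$, and symmetrically $\phi<0$ where $\omg_1-\omg_2=-1$, so $(\omg_1-\omg_2)\phi\geq0$ pointwise. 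Testing $-\lap\phi=\tld{\omg}_1-\tld{\omg}_2$ with $\phi$ then yields $\int_{\bbR^2}|\nb\phi|^2\,d\bfx=\int(\tld{\omg}_1-\tld{\omg}_2)\phi\,d\bfx\geq0$, a tautology: any \emph{increasing} vorticity function (Heaviside included) produces the wrong sign for this energy method, which is precisely why the known uniqueness results for the Lamb--Chaplygin dipole \cite{Burton96,AC2019} use moving-plane/rearrangement machinery tied to the Lipschitz nonlinearity $f(t)=t_+$ via \cite{GNN}, unavailable for $f=\bfone_{\{\cdot>0\}}$. The Poho\v{z}aev identity of Lemma \ref{lem_WM=energy} adds nothing here, since it only pins down $W$, which is already unique. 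So your central step is not merely unfinished; as formulated it provably yields no information.

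For the separation half, the scaling identity $E(\hat\omg)=E(\omg)$ is correct (Remark \ref{rmk_prop.of.G}(iv)), but the load-bearing assertion --- uniform concentration of the rescaled maximizers near $(0,1)$, excluding stretched ``horn'' configurations whose impulse comes from a thin tail --- is exactly the content that would need to be proved, and you acknowledge it lies beyond the paper's techniques; Lemma \ref{lem_size of w} gives diameter control only in the small-impulse regime, not along your $\eps\to0$ family, and \cite{CQZZ.2023,Wang.2024} concern desingularization of prescribed profiles rather than a priori shape control of energy maximizers under a mass constraint. Finally, even granting both asymptotic regimes, the conjecture asserts a sharp dichotomy at a single threshold: uniqueness and touching on all of $(0,\mu^*]$ and separation for all $\mu>\mu^*$. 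Defining $\mu^*$ as the supremum of impulses admitting a touching maximizer does give separation above it (via the paper's equivalence of touching with $\gmm=0$ for Steiner-symmetric maximizers), but it gives neither uniqueness nor touching throughout $(0,\mu^*]$; in the intermediate regime the mass bound is active, your reduction to $S_{1,\ift}$ fails, and no monotonicity-in-$\mu$ argument is offered to show the touching set is an interval. The proposal therefore leaves all three essential difficulties --- patch rigidity, uniform concentration, and the threshold dichotomy --- unresolved, consistent with the statement's status in the paper as a conjecture.
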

 
 This conjecture is supported by the fact that so far no one was able to numerically find two different shapes of Sadovskii vortex patch, and if true, it would have significant consequences for the initial value problem of the 2D Euler equations. Furthermore, this would also explain why one could not continue, even at the numerical level, the bifurcation curve consisting of vortex dipoles beyond Sadovskii vortex patch. We remark that the analogous conjecture for the vortex rings was established recently in \cite{Stability.of.Hill.vortex}. 
 
 Related to this, Pierrehumbert \cite{Pierrehumbert.1980} and Saffman--Szeto \cite[p2342]{Saffman.Szeto.1980} discussed the following more general problem. 
 
 \begin{quest}
 	Is there a traveling wave  $\omega=\mathbf{1}_\Omega-\mathbf{1}_{\Omega_-} $ such that $\Omg$ is a bounded open set of $\bbR^2_+$ satisfying  $$
 	|\Omg|=1\quad\mbox{while} \quad \quad \mu:= \int_{\Omg} x_2 \,d\bfx < \mu^{*}? 
 	$$ 
 \end{quest}
 
 It is possible that there are no such traveling wave patches, even if we do not require them to be maximizers of the energy in a suitable class. We note that Pierrehumbert calculated up to $\mu=0.29$ and anticipated that $\mu^* \simeq 0.26$ is the possible limit. It almost matches to the limiting number in \cite[Table I]{MR0734586}, after scaling.

\subsection{Outline of the paper}\label{subsec_outline}

The rest of the paper is organized as follows. In Section \ref{sec:prelim}, we fix notations and collect several technical estimates. Existence of maximizers of patch-type for the variational problem is established in Section \ref{sec:exist-maximizer}. Then in Section \ref{sec:small-impulse}, we show that when the impulse is sufficiently small, the corresponding flux constant is zero and therefore the maximizing patch touches the axis. Lastly, in Section \ref{sec_ no mass bound}, we consider the variational problem without a mass bound on the admissible class, and show that the maximizer coincides with that of the variational problem with a mass bound. 

\section{Preliminaries}\label{sec:prelim}
\subsection{Notations}
\begin{itemize}
    \item We use the following notations for the norms in $\bbR^2_+$, 
    $$\nrm{f}_1\q:=\q\nrm{f}_{L^1(\bbR^2_+)}\qd (Mass),\qqd
    \nrm{f}_2\q:=\q\nrm{f}_{L^2(\bbR^2_+)},\qqd
    \nrm{x_2f}_1\q:=\q\int_{\bbR^2_+}x_2|f(\bfx)|\q d\bfx\qd (Impulse). $$
    \item For each $R>0$ and $\bfx\in\bbR^2$, we define the open ball
    $$B_R(\bfx)\q:=\q\{\q\bfy\in\bbR^2\q:\q|\bfy-\bfx|<R\q\}.$$
    \item For each measurable set $A\in\bbR^2$, we define the characteristic function $\bfone_A:\bbR^2\to\bbR$ as 
    $$\bfone_A(\bfx)\q:=\q  \left\{\begin{matrix}
    1\qqd\mbox{ if }\bfx\in A,\\
    \q0\qqd\mbox{ if }\bfx\in A^c
    \end{matrix}  \right.$$
    \item For each function $f:\bbR^2_+\to\bbR$, we can define its odd extension to $\bbR^2$,  denoted by $\tld{f}:\bbR^2\to\bbR$ and defined as 
    $$\tld{f}(x_1,x_2)\q:=\q\left\{\begin{matrix}
        f(x_1,x_2)\\
        -f(x_1,-x_2)\\
        0\\
    \end{matrix}
    \begin{matrix}
    \qqd\mbox{ if }x_2>0,\\
    \qqd\mbox{ if }x_2<0,\\
    \qqd\mbox{ if }x_2=0.
    \end{matrix}\right.$$ \end{itemize}
    
\subsection{Some technical estimates}\q

We will collect some elementary lemmas which will be used in the sequel. In $\bbR^2_+$, we denote the Green's function $G:\bbR^2_+\times\bbR^2_+\to\bbR$ defined as 
    \begin{equation}\label{eq_G}G(\bfx,\bfy)\q:=\q\f{1}{4\pi}\log\Bigg(1+\f{4x_2y_2}{|\bfx-\bfy|^2}\Bigg)>0,\qd \mbox{for}\qd  \bfx\neq \bfy.\end{equation}
    The following lemma implies that the Green function is integrable for each $x\in\bbR^2_+$, in the domain $\set{\bfy\in\bbR^2_+\q:\q y_2<\alp}$ for any $\alp>0$. This lemma will be used later in the proof of Lemma \ref{lem_small impulse to gmm 0} and Lemma \ref{lem_ests for small impulse}. \\
    
\begin{lem}\label{lem_est.of.G} There exists a constant $C>0$ such that, for each $\alp>0$ and $\bfx=(x_1,x_2)\in\bbR^2_+$, we have
\begin{equation}\label{eq_est.of.G}
\int_{0< y_2\leq\alp}G(\bfx,\bfy)\q d\bfy\q \leq\q C\q x_2^{1/2}\alp^{3/2}.
\end{equation}
\end{lem}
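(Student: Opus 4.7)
The plan is to reduce the two-dimensional integral to a one-dimensional one by computing the $y_1$-integral in closed form, and then to absorb the remaining integral into the claimed bound via an elementary inequality. The crucial structural observation is the identity
\begin{equation*}
(x_2 - y_2)^2 + 4 x_2 y_2 \;=\; (x_2 + y_2)^2,
\end{equation*}
which makes the $y_1$-integration explicit. Concretely, writing $\bfx=(x_1,x_2)$ and $\bfy=(y_1,y_2)$, I would set $z=y_1-x_1$, $a=4x_2y_2\ge 0$, $b=|x_2-y_2|\ge 0$, and invoke the standard formula
\begin{equation*}
\int_{-\infty}^{\infty} \log\!\bigg(1 + \frac{a}{z^2 + b^2}\bigg)\, dz \;=\; 2\pi\big(\sqrt{b^2+a} - b\big),
\end{equation*}
which is easily verified by differentiating the left side in $a$ and using $\int_{\mathbb{R}} (z^2+c^2)^{-1}\,dz=\pi/c$.

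Combining the identity above with the choice $b^{2}+a=(x_2+y_2)^{2}$, the inner integral over $y_1$ collapses to a clean expression:
\begin{equation*}
\int_{-\infty}^{\infty} \log\!\bigg(1+\frac{4x_2 y_2}{|\bfx-\bfy|^{2}}\bigg)\, dy_1 \;=\; 2\pi\big(x_2+y_2-|x_2-y_2|\big) \;=\; 4\pi\,\min(x_2,y_2).
\end{equation*}
Recalling the $(4\pi)^{-1}$ prefactor in the definition of $G$ in \eqref{eq_G}, this yields the exact formula
\begin{equation*}
\int_{0<y_2\le\alpha} G(\bfx,\bfy)\, d\bfy \;=\; \int_{0}^{\alpha} \min(x_2,y_2)\, dy_2.
\end{equation*}

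To finish, I would apply the pointwise inequality $\min(x_2,y_2)\le \sqrt{x_2 y_2}$ (trivial by case analysis, or by AM--GM), giving
\begin{equation*}
\int_{0}^{\alpha} \min(x_2,y_2)\, dy_2 \;\le\; x_2^{1/2}\int_{0}^{\alpha} y_2^{1/2}\, dy_2 \;=\; \tfrac{2}{3}\, x_2^{1/2}\alpha^{3/2},
\end{equation*}
which establishes \eqref{eq_est.of.G} with $C=2/3$ (in fact uniformly in $\bfx$ and $\alpha$). There is no real obstacle here: the only nonroutine step is spotting the closed-form $y_1$-integration, and once the exact formula $\int \min(x_2,y_2)\,dy_2$ is in hand, the final bound is immediate. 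A naive approach such as bounding $\log(1+t)\lesssim \sqrt{t}$ pointwise before integrating fails because it produces a non-integrable $y_1$-tail; performing the $y_1$-integration first is essential.
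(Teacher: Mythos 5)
Your proof is correct, and it even sharpens the lemma: the closed-form evaluation $\int_{\bbR}\log\big(1+\tfrac{a}{z^2+b^2}\big)\,dz=2\pi(\sqrt{b^2+a}-b)$ together with $(x_2-y_2)^2+4x_2y_2=(x_2+y_2)^2$ gives the exact identity $\int_{0<y_2\le\alpha}G(\bfx,\bfy)\,d\bfy=\int_0^\alpha\min(x_2,y_2)\,dy_2$, from which \eqref{eq_est.of.G} follows with the explicit constant $C=2/3$. The paper follows the same overall strategy (integrate in $y_1$ first, then in $y_2$) but executes the inner step more crudely: it bounds the integrand pointwise by $\log\big(1+\tfrac{4\alpha x_2}{y_1^2}\big)$ (using $y_2\le\alpha$ and discarding $(x_2-y_2)^2$ from the denominator), rescales $y_1$ by $(4\alpha x_2)^{1/2}$, and invokes only the finiteness of $\int_{\bbR}\log(1+t^{-2})\,dt$. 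That route is shorter to write and needs no exact antiderivative, but it loses the identity; your version costs one extra differentiation-under-the-integral computation and in exchange yields the sharp formula $\int_0^\alpha\min(x_2,y_2)\,dy_2$, which also makes transparent why the exponents $1/2$ and $3/2$ appear. Your closing remark about why one cannot bound $\log(1+t)\lesssim\sqrt{t}$ before integrating is accurate (the resulting $|\bfx-\bfy|^{-1}$ tail is not integrable in $y_1$) and correctly identifies why the $y_1$-integration must be performed, in some form, before any such crude bound.
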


\begin{proof} For each $\alp>0$ and $\bfx=(x_1,x_2)\in\bbR^2_+$, we have 
$$\int_{0< y_2\leq\alp}G(\bfx,\bfy)\q d\bfy\q\leq\q \f{1}{4\pi}\int_{0< y_2\leq\alp}\log\Bigg(1+\f{4\alpha x_2}{y_1^2}\Bigg)\q d\bfy\q\leq\q\f{\alpha(4\alpha x_2)^\f{1}{2}}{4\pi}\int_\bbR\log\Big(1+\f{1}{t^2}\Big)\q dt\q\lesssim\q x_2^{1/2}\alp^{3/2},$$ using the fact that 
$$\ii{0}{\ift}\log\Big(1+\f{1}{x^2}\Big)\q d\bfx<\ift$$ which is the consequence of integration by parts.
\end{proof}
\begin{defn}\label{defn_gw}
    For each $\omg\in (L^1\cap L^2)(\bbR^2_+)$, we can define the function $\calG[\omg]:\bbR^2_+\to\bbR$ as 
$$\calG[\omg](\bfx):=\int_{\bbR^2_+}G(\bfx,\bfy)\omg(\bfy)\q d\bfy.$$ 
\end{defn}

The function $\calG[\omg]$ is well defined by the following lemma, which is a slight variation of \cite[Proposition 2.1]{AC2019}. We will prove that, for each $\omg\in (L^1\cap L^2)(\bbR^2_+)$ with $\nrm{x_2\omg}_1<\ift$, we get $\calG[\omg]\in L^\ift(\bbR^2_+)$.

\begin{lem}\label{lem_gw:estiate}
    There exists a constant $C_1\in(0,\ift)$ such that, for each $\omg\in (L^1\cap L^2)(\bbR^2_+)$, we have
    \begin{equation}\label{eq_gw<theta}
    \Big|\int_{\bbR^2_+}G(\bfx,\bfy)\omg(\bfy)\q d\bfy\Big|\q\leq\q C_1\q x_2^{1/2}\nrm{\omg}_1^{1/2} \nrm{\omg}_2^{1/2}\qd \mbox{ for each }\q\bfx=(x_1,x_2)\in\bbR^2_+.
    \end{equation}     
    If we additionally assume that $\nrm{x_2\omg}_1<\ift$, then there exists an absolute constant $C_2\in(0,\ift)$ such that 
    \begin{equation}\label{eq_gw:bdd}
        \Big|\int_{\bbR^2_+}G(\bfx,\bfy)\omg(\bfy)\q d\bfy\Big|\q\leq \q
        C_2\Big(\q
        \nrm{\omg}_1^{1/3} \nrm{\omg}_2^{1/3} \nrm{x_2\omg}_1^{1/3}+
         \nrm{\omg}_2^{1/2}\nrm{x_2\omg}_1^{1/2}
        \q\Big) \qd \mbox{ for each }\q\bfx=(x_1,x_2)\in\bbR^2_+,
    \end{equation}
    and that 
    \begin{equation}\label{eq_energy:finite}
    \Bigl|\int_{\bbR_+^2}\int_{\bbR_+^2}G(\bfx,\bfy)\omg(\bfx)\omg(\bfy)\q d\bfx d\bfy\Bigl|\q\leq\q
C_2\q\nrm{\omg}_1\nrm{\omg}_2^{1/2}\nrm{x_2\omg}_1^{1/2}.
\end{equation} 

\end{lem}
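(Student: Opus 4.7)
I will prove \eqref{eq_gw<theta} first by careful pointwise bounds on $G$, derive \eqref{eq_gw:bdd} by combining \eqref{eq_gw<theta} with a decomposition of $\omg$ in $y_2$ and an optimization over $x_2$, and obtain \eqref{eq_energy:finite} from \eqref{eq_gw<theta} via one application of Cauchy--Schwarz. My two pointwise bounds on $G$, obtained from $\log(1+t)\le t$ and $\log(1+t)\le\sqrt{t}$ (both valid for $t\ge 0$) applied to $t=4x_2y_2/|\bfx-\bfy|^2$, are
\begin{equation*}
G(\bfx,\bfy)\le \frac{x_2 y_2}{\pi|\bfx-\bfy|^2},\qquad G(\bfx,\bfy)\le \frac{\sqrt{x_2 y_2}}{2\pi|\bfx-\bfy|}.
\end{equation*}

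For \eqref{eq_gw<theta}, I split $\bbR^2_+$ into the near region $\{|\bfy-\bfx|\le x_2\}$ and the far region $\{|\bfy-\bfx|>x_2\}$. On the far region, the triangle inequality $y_2\le x_2+|\bfy-\bfx|\le 2|\bfy-\bfx|$ reduces the second pointwise bound to $G\le C\sqrt{x_2}/\sqrt{|\bfy-\bfx|}$; the resulting Riesz-potential integral $\int|\omg(\bfy)|\,|\bfy-\bfx|^{-1/2}d\bfy$ is controlled by a standard near-far split (Cauchy--Schwarz with $\omg\in L^2$ inside a ball of radius $R$, $L^1$-estimate outside, optimizing $R$), yielding $C\sqrt{x_2}\nrm{\omg}_1^{1/2}\nrm{\omg}_2^{1/2}$. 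On the near region, $y_2\le 2x_2$ and $G$ is genuinely logarithmic, $G\le C\log(x_2/|\bfy-\bfx|)+O(1)$. I then apply H\"older's inequality with the pair $L^{4}$--$L^{4/3}$: a direct computation (substitute $u=|\bfy-\bfx|/x_2$ into $\int\log^{4}$) gives $\nrm{G\mathbf{1}_{B_{x_2}(\bfx)}}_{L^{4}_{\bfy}}\le Cx_2^{1/2}$, while $L^p$-interpolation between $L^1$ and $L^2$ yields $\nrm{\omg}_{L^{4/3}}\le \nrm{\omg}_1^{1/2}\nrm{\omg}_2^{1/2}$. Combining the two regions gives \eqref{eq_gw<theta}.

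For \eqref{eq_gw:bdd}, I use two complementary estimates for $|\calG[\omg](\bfx)|$ at fixed $\bfx$. The first is \eqref{eq_gw<theta} itself. The second comes from splitting $\omg=\omg\mathbf{1}_{y_2\le x_2/2}+\omg\mathbf{1}_{y_2>x_2/2}$: on $\{y_2\le x_2/2\}$, we have $|\bfy-\bfx|\ge x_2/2$, so the first pointwise bound on $G$ gives $C\nrm{x_2\omg}_1/x_2$; on $\{y_2>x_2/2\}$, Markov's inequality $\nrm{\omg\mathbf{1}_{y_2>x_2/2}}_1\le 2\nrm{x_2\omg}_1/x_2$ substituted into \eqref{eq_gw<theta} yields $C\nrm{\omg}_2^{1/2}\nrm{x_2\omg}_1^{1/2}$ (the factor $x_2^{1/2}$ cancels). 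Writing $M=\nrm{\omg}_1^{1/2}\nrm{\omg}_2^{1/2}$ and $N=\nrm{x_2\omg}_1$, the two bounds read $|\calG|\le Cx_2^{1/2}M$ and $|\calG|\le CN/x_2+C\nrm{\omg}_2^{1/2}\nrm{x_2\omg}_1^{1/2}$; a case analysis at the crossover $x_2\sim(N/M)^{2/3}$ shows that for every $x_2>0$, $|\calG|\le CM^{2/3}N^{1/3}+C\nrm{\omg}_2^{1/2}\nrm{x_2\omg}_1^{1/2}$, matching the two terms in \eqref{eq_gw:bdd}.

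For \eqref{eq_energy:finite}, I apply \eqref{eq_gw<theta} pointwise, multiply by $|\omg(\bfx)|$, integrate, and use Cauchy--Schwarz on $\int x_2^{1/2}|\omg|\,d\bfx\le \nrm{\omg}_1^{1/2}\nrm{x_2\omg}_1^{1/2}$:
\begin{equation*}
\Bigl|\iint G\,\omg(\bfx)\omg(\bfy)\,d\bfx\,d\bfy\Bigr|\le\int|\calG[\omg](\bfx)|\,|\omg(\bfx)|\,d\bfx\le C\nrm{\omg}_1^{1/2}\nrm{\omg}_2^{1/2}\int x_2^{1/2}|\omg|\,d\bfx\le C\nrm{\omg}_1\nrm{\omg}_2^{1/2}\nrm{x_2\omg}_1^{1/2}.
\end{equation*}
The main obstacle is \eqref{eq_gw<theta}: one must exploit the genuinely logarithmic character of $G$ near the diagonal through the $L^{4}$--$L^{4/3}$ pairing, since the crude $\sqrt{x_2y_2}/|\bfx-\bfy|$ bound alone would leave a factor of $\sqrt{y_2}$ that cannot be absorbed into $\nrm{\omg}_1^{1/2}\nrm{\omg}_2^{1/2}$ without further information on $\omg$.
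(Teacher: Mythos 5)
Your proof is correct. The main difference from the paper is that the paper does not actually prove \eqref{eq_gw<theta} and \eqref{eq_energy:finite} at all --- it cites them from Proposition 2.1 of Abe--Choi --- whereas you supply a self-contained argument: the two elementary bounds $\log(1+t)\le t$ and $\log(1+t)\le\sqrt t$ give $G\lesssim x_2y_2/|\bfx-\bfy|^2$ and $G\lesssim\sqrt{x_2y_2}/|\bfx-\bfy|$, and your near/far split at radius $x_2$ combined with the $L^4$--$L^{4/3}$ pairing on the logarithmic singularity correctly produces $C x_2^{1/2}\nrm{\omg}_1^{1/2}\nrm{\omg}_2^{1/2}$ (your closing remark is apt: the pure $|\bfx-\bfy|^{-1}$ bound cannot be paired against any $L^q$ norm interpolated from $L^1\cap L^2$ on the near region, so the log must be exploited). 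Your derivation of \eqref{eq_energy:finite} from \eqref{eq_gw<theta} by one Cauchy--Schwarz is exactly the natural route. For \eqref{eq_gw:bdd}, your argument is essentially the paper's: the paper splits the integral at $|\bfx-\bfy|= x_2/2$, bounds the far piece by both $\nrm{x_2\omg}_1/x_2$ and $x_2^{1/2}\nrm{\omg}_1^{1/2}\nrm{\omg}_2^{1/2}$ and interpolates as $A^{1/3}B^{2/3}$, and on the near piece uses that $y_2\ge x_2/2$ forces $\nrm{\omg}_{L^1(B)}\le 2\nrm{x_2\omg}_1/x_2$; you split in the $y_2$ variable instead and replace the explicit geometric-mean interpolation by a case analysis at the crossover scale $x_2\sim(N/M)^{2/3}$, which is the same inequality $\min(A,B)\le A^{1/3}B^{2/3}$ in disguise. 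Both versions land on the same two terms, so the difference is only bookkeeping; what your write-up buys is independence from the external reference.
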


\begin{proof} The estimates \eqref{eq_gw<theta} and \eqref{eq_energy:finite} were borrowed from \cite[Proposition 2.1]{AC2019}. To show \eqref{eq_gw:bdd}, assume that $\nrm{x_2\omg}_1<\ift$ and split the integral as 
$$\Big|\int_{\bbR^2_+}G(\bfx,\bfy)\omg(\bfy)\q d\bfy\Big|\q\leq\q\Big|\int_{|x-y|\geq x_2/2}\Big|+\Big|\int_{|x-y|<x_2/2}\Big|\q=\q\mbox{(I) + (II)}.$$
For (I), using the inequality, $$\log(1+t)\q\leq\q t\qd \mbox{ for each } t>0,$$ we have 
$$\mbox{(I)}\q\leq\q
\int_{|x-y|\geq x_2/2}\f{x_2y_2}{\pi|\bfx-\bfy|^2}|\omg(\bfy)|\q d\bfy\q\leq\q \f{4}{\pi x_2}\nrm{x_2\omg}_1\q=: A.$$
On the other hand, by \eqref{eq_gw<theta}, we obtain 
$$\mbox{(I)}\q\lesssim\q x_2^{1/2}\nrm{\omg}_1^{1/2}\nrm{\omg}_2^{1/2}\q=:B.$$ In sum, we have 
$$\mbox{(I)}\q\lesssim\q A^\f{1}{3}B^\f{2}{3}\q\lesssim\q
 \nrm{\omg}_1^{1/3} \nrm{\omg}_2^{1/3} \nrm{x_2\omg}_1^{1/3}.$$
 For (II), we put $\calB:=B_{x_2/2}(\bfx)$ and apply \eqref{eq_gw<theta} to have 
 $$\mbox{(II)}\q=\q\Big|\int_{\bbR^2_+}G(\bfx,\bfy)\Big(\omg\cdot\bfone_\calB\Big)(\bfy)\q d\bfy\Big|\q\leq\q C x_2^{1/2}\nrm{\omg}_{L^1(\calB)}^{1/2}
 \nrm{\omg}_{L^2(\calB)}^{1/2}.$$ Here observe that 
$$\nrm{\omg}_{L^1(\calB)}\q\leq\q\f{2}{x_2}\int_\calB|y_2\omg(\bfy)|\q d\bfy\q\leq\q\f{2}{x_2}\nrm{x_2\omg}_1$$ due to the definition of $\calB$. Therefore we have 
$$\mbox{(II)}\q\lesssim\q\nrm{\omg}_2^{1/2}\nrm{x_2\omg}_1^{1/2}$$ which completes the proof of \eqref{eq_gw:bdd}.
\end{proof}

The following lemma gives the relations between $\calG[\omg]$ and $\omg$. For the regularity argument of $\calG[\omg]$, we refer to \cite[Lemma 8.1]{majda-bertozzi}, \cite[Appendix 2.3]{pulvirenti} (see also \cite{G-B.elliptic.pde}, \cite{Evans.PDE}).

\begin{lem}\label{lem_gw:C1}
For each $\omg\in (L^1\cap L^\ift)(\bbR^2_+)$, the function  $\tld{\calG[\omg]}$, which is the odd extension of $\calG[\omg]$ in Definition \ref{defn_gw}, satisfies $\tld{\calG[\omg]}\q\in\q C^{1,r}(\bbR^2)\q$ for each $r\in(0,1)$ and 
$$\tld{\calG[\omg]}\q\in H^2_{loc}(\bbR^2)\q\q\mbox{ with }\q
-\lap\q\tld{\calG[\omg]}\q=\q\tld{\omg}\qd\mbox{ a.e. }\q\mbox{ in }\q \bbR^2$$ where $\tld{\omg}$ is the odd extension of $\omg$ to $\bbR^2$. 
\end{lem}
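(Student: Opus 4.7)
The plan is to identify $\tld{\calG[\omg]}$ with a standard Newtonian logarithmic potential on $\bbR^2$ and then invoke classical elliptic regularity.

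First I would rewrite the half-plane Green's function via reflection. A direct computation shows
\[
G(\bfx,\bfy) = -\frac{1}{2\pi}\log|\bfx-\bfy| + \frac{1}{2\pi}\log|\bfx-\bfy^{*}|, \qquad \bfy^{*} := (y_{1},-y_{2}),
\]
since $|\bfx-\bfy|^{2}+4x_{2}y_{2}=|\bfx-\bfy^{*}|^{2}$. Using the change of variable $\bfy\mapsto\bfy^{*}$ in the integral over the lower half-plane, and the odd symmetry of $\tld{\omg}$, one checks that for every $\bfx\in\bbR^{2}$,
\[
\tld{\calG[\omg]}(\bfx) \;=\; -\frac{1}{2\pi}\int_{\bbR^{2}}\log|\bfx-\bfy|\,\tld{\omg}(\bfy)\,d\bfy \;=:\; \Phi(\bfx),
\]
where the right-hand side is well-defined (despite $\log$ not being integrable at infinity) because the oddness of $\tld{\omg}$ produces cancellation equivalent to the reflection formula, and the combined kernel $G$ decays like $x_{2}y_{2}/|\bfx-\bfy|^{2}$ at infinity while $\omg\in L^{1}$.

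Next I would check continuity and boundedness of $\nabla\Phi$ by hand, to verify the odd extension is genuinely $C^{1}$ across $\{x_{2}=0\}$ and not merely inside each half-plane. Formally differentiating,
\[
\nabla\Phi(\bfx) \;=\; \frac{1}{2\pi}\int_{\bbR^{2}}\frac{\bfy-\bfx}{|\bfx-\bfy|^{2}}\,\tld{\omg}(\bfy)\,d\bfy.
\]
Splitting the integral on $|\bfy-\bfx|<1$ and $|\bfy-\bfx|\ge 1$, the first piece is controlled by $\nrm{\tld\omg}_{\infty}$ and the second by $\nrm{\tld\omg}_{1}$; hence $\nabla\Phi\in L^{\infty}(\bbR^{2})$. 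Symmetry of the formula under $\bfx\mapsto\bfx^{*}$, combined with $\tld{\omg}(\bfy^{*})=-\tld{\omg}(\bfy)$, shows $\Phi$ is odd in $x_{2}$ and smooth across the axis in the Newtonian-potential sense.

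Finally I would apply the classical interior regularity theorems for the Newton potential (e.g.\ \cite[Ch.~4]{G-B.elliptic.pde}, \cite[Lemma 8.1]{majda-bertozzi}, \cite[App.~2.3]{pulvirenti}). Because $\tld{\omg}\in L^{\infty}(\bbR^{2})$, the Newton potential satisfies $\Phi\in C^{1,r}_{loc}(\bbR^{2})$ for every $r\in(0,1)$; combined with the uniform bounds above one upgrades this to $C^{1,r}(\bbR^{2})$. Similarly $\tld{\omg}\in L^{2}_{loc}(\bbR^{2})$ (in fact $L^{p}_{loc}$ for all $p$) yields $\Phi\in W^{2,2}_{loc}(\bbR^{2})=H^{2}_{loc}(\bbR^{2})$ together with the pointwise a.e.\ identity $-\lap\Phi=\tld{\omg}$. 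Translating back via $\Phi=\tld{\calG[\omg]}$ gives the lemma.

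The only substantive point requiring care is the justification that $\tld{\calG[\omg]}$ really equals the Newtonian potential of $\tld{\omg}$ globally on $\bbR^{2}$ (so that both the $C^{1,r}$ matching and the distributional Laplacian hold across $\set{x_{2}=0}$); once this identification is established, the remaining regularity is immediate from standard references.
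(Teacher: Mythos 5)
Your proposal is correct and takes essentially the same approach as the paper, whose entire proof is the one-line observation that $\tld{\calG[\omg]}=\psi[\tld{\omg}]$ in $\bbR^2$ together with standard potential estimates (citing the same references you do). You have simply written out the reflection identity $G(\bfx,\bfy)=-\tfrac{1}{2\pi}\log|\bfx-\bfy|+\tfrac{1}{2\pi}\log|\bfx-\bfy^*|$ and the verification of well-definedness and cross-axis regularity that the paper leaves implicit.
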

\begin{proof} It is due to the relation $$\tld{\calG[\omg]}=\psi[\tld{\omg}]\qd\mbox{ in }\bbR^2$$ and potential estimates of $\psi$.
\end{proof}

We also obtain the asymptotic behavior of $\calG[\omg]$ and a different representation of the kinetic energy which were borrowed from \cite[Proposition 2.2]{AC2019}).

\begin{lem}\label{lem_gw:decay}
    For each $\omg\in (L^1\cap L^2)(\bbR^2_+)$ with $\nrm{x_2\omg}_1<\ift$, we have 
    $$\lim_{|\bfx|\to\ift}\q \calG[\omg](\bfx)\q=\q0.$$
    Moreover, we have 
    $$\f{1}{2}\int_{\bbR^2_+}\calG[\omg]\omg\q d\bfx=\f{1}{4}\int_{\bbR^2}\big|\bfu\big|^2\q d\bfx$$ 
    where $\bfu=k*\tld{\omg}$. Here $\tld{\omg}$ is the odd extension of $\omg$ to $\bbR^2$, and the kernel $k$ is given in the equation \ref{eq_Euler eq.}.
    \end{lem}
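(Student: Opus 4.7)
The plan is to treat the two assertions in turn, using only the previously-established Lemmas \ref{lem_est.of.G}--\ref{lem_gw:C1} together with an image-method identification $\calG[\omg] = \psi[\tld{\omg}]|_{\bbR^2_+}$.

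\textbf{Decay at infinity.} First I would fix $\eps>0$ and split $\calG[\omg](\bfx) = I_R(\bfx)+J_R(\bfx)$, where $I_R$ integrates over $\{|\bfy|\leq R\}$ and $J_R$ over the complement. For the tail $J_R$, I would apply the bound \eqref{eq_gw:bdd} to the truncated vorticity $\omg\cdot\bfone_{\{|\bfy|>R\}}$; since $\omg\in(L^1\cap L^2)(\bbR^2_+)$ and $x_2\omg\in L^1(\bbR^2_+)$, dominated convergence sends each of the three norms on the right-hand side of \eqref{eq_gw:bdd} to $0$ as $R\to\infty$, so $R$ can be chosen large enough that $|J_R(\bfx)|<\eps/2$ uniformly in $\bfx$. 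With $R$ fixed, I would estimate $I_R$ using the elementary bound $\log(1+t)\leq t$, which yields $G(\bfx,\bfy)\leq (x_2y_2)/(\pi|\bfx-\bfy|^2)$; for $|\bfx|\geq 2R$ one has $|\bfx-\bfy|\geq |\bfx|/2$ on the support, so
\[|I_R(\bfx)|\;\leq\; \f{4 x_2}{\pi|\bfx|^2}\int_{|\bfy|\leq R} y_2|\omg(\bfy)|\,d\bfy \;\leq\; \f{4\,\nrm{x_2\omg}_1}{\pi|\bfx|},\]
using $x_2\leq|\bfx|$ in the last step. This drops below $\eps/2$ once $|\bfx|$ is large enough, proving $\calG[\omg](\bfx)\to 0$.

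\textbf{Image-method identification.} The key algebraic observation is $\calG[\omg]=\psi[\tld{\omg}]|_{\bbR^2_+}$. Splitting $\psi[\tld{\omg}](\bfx)=-(2\pi)^{-1}\int_{\bbR^2}\log|\bfx-\bfy|\,\tld{\omg}(\bfy)\,d\bfy$ into contributions from the upper and lower half-planes, and reflecting the lower integral via $\bfy\mapsto\bfy^*=(y_1,-y_2)$ using $\tld{\omg}(y_1,y_2)=-\omg(y_1,-y_2)$ for $y_2<0$, one obtains a kernel $-(2\pi)^{-1}\log|\bfx-\bfy|+(2\pi)^{-1}\log|\bfx-\bfy^*|$ on $\bbR^2_+$; a direct computation gives $|\bfx-\bfy^*|^2/|\bfx-\bfy|^2 = 1+4x_2y_2/|\bfx-\bfy|^2$, which recovers $G$ after taking logs. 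Hence $\psi[\tld{\omg}]$ inherits oddness in $x_2$ from $\tld{\omg}$, and consequently $\psi[\tld{\omg}]\tld{\omg}$ is even, yielding
\[\int_{\bbR^2}\psi[\tld{\omg}]\,\tld{\omg}\,d\bfx \;=\; 2\int_{\bbR^2_+}\calG[\omg]\,\omg\,d\bfx.\]

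\textbf{Energy identity and main obstacle.} To convert the left-hand side of the previous display into $\int_{\bbR^2}|\bfu|^2\,d\bfx=\int_{\bbR^2}|\nb\psi[\tld{\omg}]|^2\,d\bfx$, I would integrate by parts against $-\lap\psi[\tld{\omg}]=\tld{\omg}$ (valid distributionally via the approximation used in Lemma \ref{lem_gw:C1}) on a ball $B_M\subset\bbR^2$ and send $M\to\infty$. Finiteness of the target right-hand side is already guaranteed by \eqref{eq_energy:finite}, and pointwise decay of $\psi[\tld{\omg}]$ on all of $\bbR^2$ follows from the decay of $\calG[\omg]$ just proved together with the oddness above. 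The only nontrivial point---which I anticipate as the main technical obstacle---is showing that the boundary term $\int_{\partial B_M}\psi[\tld{\omg}]\,(\nb\psi[\tld{\omg}]\cdot\nu)\,dS$ vanishes along some sequence $M_j\to\infty$; combining uniform pointwise decay of $\psi[\tld{\omg}]$ with an averaging-in-$M$ argument and $\nb\psi\in L^2(\bbR^2)$ (which is exactly the finite-energy bound coming from \eqref{eq_energy:finite}) should suffice to extract such a subsequence. Dividing the resulting identity $2\int_{\bbR^2_+}\calG[\omg]\omg\,d\bfx = \int_{\bbR^2}|\bfu|^2\,d\bfx$ by $4$ gives the claim.
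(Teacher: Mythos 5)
Your argument for the decay statement is correct and is essentially the same as the paper's: the paper also truncates the vorticity (to $\omg_n=\omg\cdot\bfone_{B_n(0)}$), controls the tail $\calG[\omg-\omg_n]$ uniformly via \eqref{eq_gw:bdd} with constants tending to zero, and controls the compactly supported piece by the elementary bound $G(\bfx,\bfy)\le x_2y_2/(\pi|\bfx-\bfy|^2)$, which produces exactly your $O(\nrm{x_2\omg}_1/|\bfx|)$ estimate. So that half needs no further comment.

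For the energy identity the comparison is different: the paper does not prove it at all, but cites \cite[Proposition 2.2]{AC2019}, so you are supplying an argument where the paper supplies a reference. Your image-method identification of $G$ with the half-plane Green's function and the even/odd reduction $\int_{\bbR^2}\psi[\tld\omg]\,\tld\omg = 2\int_{\bbR^2_+}\calG[\omg]\omg$ are correct. The one genuinely soft step is your parenthetical claim that $\nb\psi[\tld\omg]\in L^2(\bbR^2)$ ``is exactly the finite-energy bound coming from \eqref{eq_energy:finite}.'' It is not: \eqref{eq_energy:finite} bounds the Green's-function quadratic form $\iint G\,\omg\,\omg$, and this equals $\tfrac12\int|\bfu|^2$ only \emph{after} the identity you are trying to prove has been established, so invoking it there is circular. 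To close the argument you need an independent a priori bound on $\nb\psi$ before the cutoff limit, e.g.\ $\bfu\in L^4(\bbR^2)$ by Hardy--Littlewood--Sobolev from $\tld\omg\in L^{4/3}$ (interpolating $L^1\cap L^2$), which gives $\int_{B_M}|\nb\psi|^2\lesssim M$ and lets the averaged boundary term plus the uniform decay of $\psi$ force $\nb\psi\in L^2$ and kill the boundary contribution; alternatively, prove the identity first for compactly supported smooth $\omg$ and pass to the limit using the quadratic-form continuity already contained in Lemma \ref{lem_gw:estiate}. With that repair your outline is a complete proof of the cited fact.
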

\begin{proof} By \eqref{eq_gw:bdd} in Lemma \ref{lem_gw:estiate}, there exists a constant $C>0$ such that, for any  $\omg\in (L^1\cap L^2)(\bbR^2_+)$ with $\nrm{x_2\omg}_1<\ift$ and for the sequence $\{\omg_n\}=\{\omg\cdot\bfone_{B_n(0)}\}$, we have  
$$\Big|\calG[\omg](\bfx)\Big|\q\leq\q 
\Big|\calG[\omg-\omg_n](\bfx)\Big|+
\Big|\calG[\omg_n](\bfx)\Big|\q\leq\q
 C\q\Bigg(C_n+\f{|\bfx|}{(|\bfx|-n)^2}\nrm{x_2\omg}_1\Bigg)\qd\mbox{ for each }|\bfx|>n,$$ where 
$$C_n:=\nrm{\omg-\omg_n}_1^{1/3} \nrm{\omg-\omg_n}_2^{1/3} \nrm{x_2(\omg-\omg_n)}_1^{1/3}+
 \nrm{\omg-\omg_n}_2^{1/2}\nrm{x_2(\omg-\omg_n)}_1^{1/2}.$$  Therefore we have 
 $$\limsup_{|\bfx|\to\ift}{|\calG[\omg](\bfx)|}\q\leq\q C\cdot C_n\qd\mbox{ for each } n\geq1.$$ Observe that $C_n\to0$ as $n\to\ift$. As we take the limit $n\to\ift$, we obtain  
 $$\lim_{|\bfx|\to\ift}\q \calG[\omg](\bfx)=0. $$This finishes the proof.  \end{proof}

\section{Existence of maximizers of patch-type }\label{sec:exist-maximizer} \q

In this section, we will use the variational method to prove that there exists a patch-type maximizer of the kinetic energy in the admissible class whose setting is from \cite{FT81} (see also \cite{AC2019}). Each maximizer will generate a traveling wave solution of \eqref{eq_Euler eq.}. The main result of this section is Theorem \ref{thm_maximizer}.

\subsection{Variational problem on exact impulse and mass bound}\q

Given $\mu,\nu,\lmb>0,$  we define an admissible set 
    \begin{eqnarray*}
    K_{\mu,\nu,\lambda}\q:=\q\Bigl\{\q\omg\in L^1(\bbR_+^2)\q:\q\omg=\lmb\cdot\bfone_A&\mbox{a.e.}&\mbox{where }A\subset\bbR_+^2 \mbox{ is (Lebesgue) measurable, }\\
&\qd&\nrm{x_2\omg}_1=\mu,\qd\nrm{\omg}_1\leq\nu\q\Bigl\}
    \end{eqnarray*}
    on which its energy functional $E(\cdot):K_{\mu,\nu,\lambda}\to\bbR$ is defined as 
    $$
    E(\omg)\q:=\q\frac{1}{2}\int_{\bbR_+^2}\calG[\omg](\bfx)\omg(\bfx)\q d\bfx=\q\frac{1}{2}\int_{\bbR_+^2}\int_{\bbR_+^2}G(\bfx,\bfy)\omg(\bfy)\omg(\bfx)\q d\bfy d\bfx.
    $$ 
    Note that, if $\omg$ has a bounded support, then Lemma \ref{lem_gw:decay} implies that 
    $$E(\omg)=\f{1}{4}\int_{\bbR^2}|\bfu|^2\q d\bfx$$ where $\bfu=k*\widetilde{\omg}$ with $k$ from \eqref{eq_Euler eq.} and $\widetilde{\omg}$ is the odd extension of $\omg$. We formulate the maximization problem by
$$
I_{\mu,\nu,\lambda}\q:=\sup_{\omg\in K_{\mu,\nu,\lambda}}E(\omg)$$
and define the set of maximizers as 
$S_{\mu,\nu,\lambda}:=\big\{\q\omg\in K_{\mu,\nu,\lambda}\q:\q E(\omg)=I_{\mu,\nu,\lambda}\q\big\}.$\\

\begin{rmk}\label{rmk_prop.of.G}\q\\
\noindent(i) Recall that, for each $(\bfx,\bfy)\in\bbR^2_+\times\bbR^2_+$ with $\bfx\neq \bfy$, we have $$G(\bfx,\bfy)\q=\q G(\bfy,\bfx)>0.$$
It follows that,for each $\q\omg\in K_{\mu,\nu,\lmb}$, we have $E(\omg)\geq0$ and $I_{\mu,\nu.\lmb}>0$. Moreover, we get $I_{\mu,\nu,\lmb}<\ift$ by \eqref{eq_energy:finite} in Lemma \ref{lem_gw:estiate}. \\
\noindent(ii) If $f_\tau:\bbR^2\to\bbR^2$ denotes the translation of some fixed amount $\tau\in\bbR$ along the $x_1$-axis, $\q$i.e. $$\q f_\tau(x_1,x_2)\q=\q (x_1+\tau,x_2),$$ then we have
$$G(\bfx,\bfy)\q=\q G(f_\tau(\bfx),f_\tau(\bfy)).$$
 Therefore, if $\omg\in K_{\mu,\nu,\lmb}$, then any translation $\omg_\tau:=\omg\circ f_\tau$ satisfies $$\omg_\tau\in K_{\mu,\nu,\lmb}\qd\mbox{ and }\qd E(\omg)=E(\omg_\tau).$$ 
\noindent(iii) If $g_h:\bbR^2\to\bbR^2$ denotes the lift by some fixed amount $h>0$ along the $x_2$-axis, $\q$i.e. $$\q g_h(x_1,x_2)\q=\q (x_1,x_2+h),$$ then for each $\omg\in K_{\mu,\nu,\lmb}$, we have  
 $$ E(\omg)<E(\omg\circ g_{-h}).$$ 
\noindent(iv) For each $\bfx,\bfy\in\bbR^2_+$ with $\bfx\neq \bfy$, we have 
 $$G(\bfx,\bfy)=G(r\bfx,r\bfy)$$ for any $r>0$. By this property, for each $\omg\in (L^1\cap L^\ift)(\bbR^2_+)$ with $\nrm{x_2\omg}_1<\ift$, the scaling map given by $\hat{\omg}(\bfx):=\omg(r\bfx)$ gives the relation 
 $$E(\hat{\omg})=r^{-4}E(\omg)\qd\mbox{ for each }r>0.$$
\noindent(v) As in \cite{FT81} (and also in \cite{AC2019}), we can reduce the maximization problem to the case containing a single parameter. Indeed, for each $\omg\in K_{\mu,\nu,\lambda}$, consider the scaling 
$$
\hat{\omg}(\bfx)=\lmb^{-1}\cdot\omg(\lmb^{-1/2}\nu^{1/2}\bfx).
$$
Observe that $\hat{\omg}\in K_{M,1,1}$ where $M=\lmb^{1/2}\nu^{-3/2}\mu>0$ with the energy $E(\hat{\omg})=\nu^{-2}E(\omg).$ Here, given $\mu,\nu,\lmb>0,$  the map $\omg\mapsto\hat{\omg}$ is a bijective map from $K_{\mu,\nu,\lmb}$ onto $K_{M,1,1}$, and from $S_{\mu,\nu,\lmb}$ onto $S_{M,1,1}$. Therefore, the two maximization problems below are congruent with each other, 
$$
I_{\mu,\nu,\lambda}\q=\sup_{\omg\in K_{\mu,\nu,\lambda}}E(\omg)
\quad\mbox{and}\quad
I_{M,1,1}\q=\sup_{\omg\in K_{M,1,1}}E(\omg),
$$ with the relation $I_{M,1,1}=\nu^{-2} I_{\mu,\nu,\lmb}\q$. We hereafter abbreviate the notations as 
$$K_M:=K_{M,1,1},\qd I_M:=I_{M,1,1},\qd S_M:=S_{M,1,1}.$$
\end{rmk}\q
 
\subsection{Existence of maximizers}\label{subsec_exis.of.maxi.}\q

For convenience, we start with a definition that will be frequently used in a sequel. 
\begin{defn}\label{def_Steiner sym}
    We say that a nonnegative measurable function 
    $f:\bbR^2_+\to\bbR$  satisfies the \textit{Steiner symmetry condition} if 
    $$f(x_1,x_2)=f(-x_1,x_2)\qd\mbox{ for each }\bfx\in\bbR^2_+,$$ and for each fixed $x_2>0$, the function $f(x_1,x_2)$ is non-increasing in $x_1\geq0$. We also say that a measurable set $A\subset\bbR^2_+$ satisfies the Steiner symmetry condition if the function $g:\bbR^2_+\to\bbR$ given by $g:=\bfone_A$ satisfies the Steiner symmetry condition.
\end{defn}\q

The main theorem of Subsection \ref{subsec_exis.of.maxi.} is Theorem \ref{thm_maximizer} below. We will prove Theorem \ref{thm_maximizer} at the end of Subsection \ref{subsec_exis.of.maxi.} after the proof of several key lemmas. Each lemma is quite classical and stems from \cite{Stability.of.Hill.vortex,AC2019}. 

\begin{thm}\label{thm_maximizer}
For any $M>0$, we have the following properties.\\

    \noindent(i) The set $S_M$, which is the set of maximizers in $K_M$, is nonempty. \\

   \noindent (ii) For each $\omg\in S_M$, there exist some constants $W>0,\q \gmm\geq0$ that are uniquely determined by $\omg$ satisfying
\begin{equation}\label{eq_maximizer}
    \omg\q=\q\bfone_{\big\{ \bfx\in \bbR^2_+\q:\q \calG[\omg](\bfx)-Wx_2-\gmm>0\big\}}\qd \mbox{a.e.} \qd \mbox{in} \qd \bbR^2_+.
\end{equation}

\noindent(iii) Each $\omg \in S_M$ has a bounded support and satisfies the Steiner symmetry condition in Definition \ref{def_Steiner sym} up to translation. More precisely, for each $\omg\in S_M$, there exist $\tau\in\bbR$ and an open bounded set $A\subset\bbR^2_+$ satisfying the Steiner symmetry condition such that, for the translation $\omg_\tau$ given by $\omg_\tau(\cdot):=\omg(\cdot+(\tau,0))$, we have $$\omg_\tau\q=\q\bfone_A\qd\mbox{ a.e. in }\q\bbR^2_+.$$ 
\noindent (iv) For each $\omg\in S_M$, if we have $\nrm{\omg}_1<1$, then we get $\gmm=0$, where $\gmm$ is given in (ii).
\end{thm}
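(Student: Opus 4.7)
The plan is to carry out the vorticity method in three coordinated stages. First, I introduce the enlarged admissible class
\[
\tld K_M := \big\{ \omg \in L^1(\bbR^2_+) : 0 \leq \omg \leq 1 \text{ a.e., } \nrm{\omg}_1 \leq 1, \ \nrm{x_2 \omg}_1 \leq M \big\},
\]
which is convex and sequentially weak-$*$ compact in $L^\infty$. For a maximizing sequence $\{\omg_n\}$ of $E$ over $\tld K_M$, Riesz's rearrangement inequality applied in the $x_1$-variable---valid since the Green's function $G(\bfx,\bfy)$ from \eqref{eq_G} is symmetric-decreasing in $|x_1 - y_1|$ at fixed heights $x_2, y_2$---allows me to replace each $\omg_n$ by its Steiner symmetrization about the $x_2$-axis without decreasing energy. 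Extracting a subsequence, $\omg_n \rightharpoonup \omg'$ weakly in $L^2$ and weakly-$*$ in $L^\infty$; Steiner symmetry together with the impulse bound controls the $x_1$-tails uniformly, while the mass bound controls the $x_2$-tails, so $\omg' \in \tld K_M$. Combining the $C^{1,r}$ regularity of Lemma~\ref{lem_gw:C1}, the decay at infinity of Lemma~\ref{lem_gw:decay}, and a standard Arzela--Ascoli argument, one obtains $E(\omg_n) \to E(\omg')$ through \eqref{eq_energy:finite}; hence $\omg'$ maximizes $E$ over $\tld K_M$.

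Second, I derive the Euler--Lagrange characterization and upgrade $\omg'$ from $\tld K_M$ to $K_M$, which proves (i), (ii), and the bounded-support and Steiner-symmetry statements of (iii). For any $h \in L^\infty_c(\bbR^2_+)$ with $h \leq 0$ on $A := \{\omg' = 1\}$ and $h \geq 0$ on $\{\omg' = 0\}$, I construct a companion perturbation $\eta$ by adding to $h$ a suitable affine correction $\alp + \bt x_2$ and redistributing the excess mass near the ``boundary'' of $A$ so that $\eta$ retains the correct signs, satisfies $\int \eta = \int x_2 \eta = 0$, and yields $\omg' + \eps \eta \in \tld K_M$ for small $\eps > 0$; because $A$ is only measurable, this redistribution requires the exceptional-point framework of \cite{Stability.of.Hill.vortex}. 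Maximality gives $\int \calG[\omg'] \eta \, d\bfx \leq 0$, which after collecting the affine corrections becomes
\[
\int_{\bbR^2_+} \big(\calG[\omg'] - W x_2 - \gmm\big)\, h \, d\bfx \leq 0
\]
for two constants $W, \gmm \in \bbR$ uniquely determined by $\omg'$. Varying $h$ to isolate $A$, $A^c$, and the zero level set of $\calG[\omg'] - W x_2 - \gmm$ in turn forces the pointwise identity \eqref{eq_maximizer}: the zero level set must have Lebesgue measure zero, for if not, Stampacchia's lemma applied to $-\Delta(\calG[\omg'] - W x_2 - \gmm) = \omg'$ (Lemma~\ref{lem_gw:C1}) would give $\omg' = 0$ on that set, contradicting $0 < \omg' < 1$ there. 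The sign $W > 0$ follows from $\calG[\omg'] \to 0$ at infinity (Lemma~\ref{lem_gw:decay}) together with $\nrm{x_2 \omg'}_1 = M > 0$; $\gmm \geq 0$ follows from the Dirichlet condition $\calG[\omg'] = 0$ on $\partial \bbR^2_+$ combined with non-emptiness of $A$. Boundedness of $\supp \omg'$ is then immediate from \eqref{eq_maximizer} since $W x_2 + \gmm \to \infty$ as $x_2 \to \infty$, openness of $A$ follows from $C^{1,r}$ continuity of $\calG[\omg']$, and Steiner symmetry of $A$ up to a unique $x_1$-translation is inherited from the symmetrization step of Stage~1.

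Finally, for (iv), if $\nrm{\omg'}_1 < 1$ then the mass constraint $\nrm{\omg}_1 \leq 1$ is inactive at $\omg'$; accordingly, the test class in Stage~2 may be enlarged by dropping the normalization $\int \eta = 0$, which eliminates the Lagrange multiplier associated with that constraint. Since this multiplier is precisely $\gmm$, we obtain $\gmm = 0$. The principal technical obstacle is the construction of admissible perturbations $\eta$ in Stage~2: for a merely measurable $A$, simultaneously enforcing the sign, zero-mass, and zero-impulse constraints on $\eta$ while retaining enough flexibility to deduce the pointwise identity \eqref{eq_maximizer} is the most delicate part of the argument, and it is here that the exceptional-point framework plays its decisive role.
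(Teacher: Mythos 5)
Your three-stage plan is essentially the paper's own proof: relax to the larger class $\tld K_M$, extract a Steiner-symmetrized weak limit whose energy converges, derive the Euler--Lagrange characterization by sign-constrained perturbations built on the exceptional-point framework, and obtain (iv) by dropping the mass-normalizing part of the perturbation when the mass constraint is inactive. That said, two steps you dismiss as immediate are genuine gaps.

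First, strict positivity $W>0$. Your argument (``decay of $\calG[\omg']$ at infinity together with $\nrm{x_2\omg'}_1=M>0$'') rules out $W<0$ and the case $W=0,\ \gmm=0$ (both force $|\{\calG[\omg']-Wx_2-\gmm>0\}|=\ift$), but it does not touch the case $W=0,\ \gmm>0$: there $\{\calG[\omg']>\gmm\}$ is a perfectly respectable bounded open set and no contradiction with finite mass arises. The paper excludes this case with the Turkington-type identity \eqref{eq_W formula} of Remark \ref{rmk_W formula}, which represents the traveling speed as a manifestly positive double integral over a compactly supported patch (Lemma \ref{lem_W formula}); some such additional input is needed. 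Second, boundedness of the support is \emph{not} immediate from \eqref{eq_maximizer}. The growth of $Wx_2+\gmm$ only bounds the patch in the $x_2$-direction. When $\gmm=0$, both $\calG[\omg'](\bfx)$ and $Wx_2$ tend to zero as $|x_1|\to\ift$ along the axis, so the set $\{\calG[\omg']-Wx_2>0\}$ could a priori contain an unbounded thin sliver of finite measure hugging $\{x_2=0\}$. The paper's Lemma \ref{lem_w:cpt_supp} rules this out by proving the quantitative estimate \eqref{eq_est of gw/x_2}, which shows $\calG[\omg'](\bfx)/x_2\to 0$ as $|x_1|\to\ift$ uniformly in $x_2$; this is a real Biot--Savart estimate, not a soft consequence of the characterization. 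A smaller omission: membership in $K_M$ also requires the \emph{exact} impulse $\nrm{x_2\omg'}_1=M$, which you never verify; the paper gets it in one line by noting that translating $\omg'$ toward the axis strictly increases the energy, so a maximizer must saturate the impulse constraint.
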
\q

Note that we have the following corollary by the scaling in Remark \ref{rmk_prob.congruent}.
\begin{cor}\label{cor_maximizer}
For any $\mu,\q\nu,\q\lmb>0$, the set $S_{\mu,\nu,\lmb}$ is nonempty. For each maximizer $\omg\in S_{\mu,\nu,\lmb}$, there exist some constants $W>0$, $\gmm\geq0$ that are uniquely determined by $\omg$ satisfying 
    $$
    \omg\q=\q\lmb\cdot\bfone_\Omg\qd \mbox{a.e.} \qd \mbox{in} \qd \bbR^2_+
    \qd\mbox{ where }\Omg\q=\q\Big\{\q \bfx\in \bbR^2_+\q:\q \calG[\omg](\bfx)-Wx_2-\gmm>0\q \Big\}.$$
    Moreover, each maximizer satisfies the Steiner symmetry condition in Definition \ref{def_Steiner sym} up to translation. In addition, if we have  $\nrm{\omg}_1<\nu$, then we get $\gmm=0$.
\end{cor}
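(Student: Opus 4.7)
The plan is to follow the classical Friedman--Turkington \cite{FT81} framework, as adapted in \cite{AC2019, Stability.of.Hill.vortex}, with four stages. First, I would work in the relaxed admissible class
\[
\tld{K}_M := \{\,\omg\in L^\infty(\bbR^2_+) : 0\le\omg\le 1,\; \nrm{x_2\omg}_1=M,\; \nrm{\omg}_1\le 1\,\}
\]
and exhibit a maximizer of $E$ there. Take a maximizing sequence $\{\omg_n\}$, replace each $\omg_n$ by its Steiner symmetrization $\omg_n^*$ about the $x_2$-axis (which preserves both constraints and does not decrease $E$ by the Riesz rearrangement inequality applied to the Green kernel $G$), so one may assume each $\omg_n$ is Steiner symmetric. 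Using the uniform $L^1$ bound and the impulse constraint, the symmetric $\{\omg_n\}$ are effectively concentrated near a bounded strip, which prevents escape of energy to infinity: by Lemma~\ref{lem_gw:estiate} and the decay in Lemma~\ref{lem_gw:decay}, $\calG[\omg_n]$ is equicontinuous and decays, so along a weak-$L^2$ subsequence one gets $\omg_n\weakto\omg^*$ with $E(\omg_n)\to E(\omg^*)$. A standard concentration--compactness/tightness argument (using that $I_M>0$ and that the impulse constraint cannot all escape to $x_2=0$ since $\calG$ vanishes there) then upgrades $\nrm{x_2\omg^*}_1\le M$ to equality.

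Second, to upgrade the maximizer $\omg^*$ from the relaxed class to a genuine patch in $K_M$, I would run the bathtub/perturbation argument. Given measurable test perturbations $\eta\in L^\infty_c(\bbR^2_+)$ satisfying $\eta\le 0$ where $\omg^*$ is near $1$ and $\eta\ge 0$ where $\omg^*$ is near $0$, and with $\int\eta = \int x_2\eta = 0$, admissibility of $\omg^*+\eps\eta$ for small $\eps>0$ together with first-order stationarity yields
\[
\f{d}{d\eps}E(\omg^*+\eps\eta)\Big|_{\eps=0}\;=\;\int_{\bbR_+^2}\calG[\omg^*]\,\eta\,d\bfx\;\le\;0.
\]
Choosing $\eta$ to absorb the two linear constraints introduces Lagrange multipliers $W\in\bbR$ and $\gmm\in\bbR$ such that $(\calG[\omg^*]-Wx_2-\gmm)\eta\le 0$ for \emph{all} admissible $\eta$. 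This forces $\calG[\omg^*]-Wx_2-\gmm\le 0$ a.e.\ on $\{\omg^*<1\}$ and $\ge 0$ a.e.\ on $\{\omg^*>0\}$; continuity of $\calG[\omg^*]$ (Lemma~\ref{lem_gw:C1}) together with the fact that $\{\calG[\omg^*]-Wx_2-\gmm=0\}$ has measure zero (elliptic unique continuation, since $-\lap(\calG[\omg^*]-Wx_2-\gmm)=\omg^*$) then pins down $\omg^* = \bfone_{\{\calG[\omg^*]-Wx_2-\gmm>0\}}$ a.e. The sign $W>0$ follows because $W\le 0$ would keep the patch-set $\{\calG[\omg^*]>Wx_2+\gmm\}$ from being bounded, contradicting $\nrm{\omg^*}_1<\infty$; bounded support then follows from $\calG[\omg^*]\to 0$ at infinity and $W>0$. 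The sign $\gmm\ge 0$ comes from $\omg^*=0$ on $\{x_2=0\}$ by boundary behavior of $\calG$. Uniqueness of $(W,\gmm)$ is obtained by evaluating the identity $\calG[\omg^*]=Wx_2+\gmm$ at two measure-theoretically distinct boundary points of the patch set, using the exceptional-point construction of \cite{Stability.of.Hill.vortex} since $\omg^*$ is a priori only measurable.

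Third, for the Steiner symmetry statement (iii), one maximizer is automatically Steiner symmetric by construction. For a general $\omg\in S_M$, let $\omg^\#$ denote its Steiner symmetrization; then $E(\omg^\#)\ge E(\omg)=I_M$, so $\omg^\#\in S_M$ as well. Applying the equality case of the Riesz rearrangement inequality for $G$ (which, being strictly symmetric-decreasing in $|x_1-y_1|$ for fixed $x_2,y_2$, forces super-level sets to be translates of their Steiner symmetrizations) yields that $\omg$ coincides with $\omg^\#$ after an $x_1$-translation. Bounded support has already been established. Finally, for (iv), if $\nrm{\omg}_1<1$ then the mass inequality constraint is inactive at $\omg$, so by KKT one may drop the zero-mass requirement on the perturbation $\eta$ above; repeating the argument produces the same characterization but with $\gmm=0$, and by uniqueness from the previous step this is the multiplier.

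The main obstacle is the perturbation construction in the second stage, specifically designing $\eta$ that simultaneously absorbs the mass and impulse constraints and extracts \emph{both} multipliers $W$ and $\gmm$ cleanly, while respecting the one-sided constraints $0\le\omg^*+\eps\eta\le 1$ on the measurable set $\{\omg^*\in(0,1)\}$ that is a priori not negligible. The second delicate point is the measure-theoretic notion of boundary points needed for the uniqueness of $(W,\gmm)$: classical topological boundaries are inadequate because $\omg^*$ is only defined up to null sets, so the exceptional-point machinery of \cite{Stability.of.Hill.vortex} has to be set up carefully before the two-point evaluation argument can be executed.
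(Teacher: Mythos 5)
Your outline follows the paper's route almost step for step: relaxed Friedman--Turkington class, Steiner symmetrization of the maximizing sequence, weak limit plus energy continuity, bathtub perturbation with exceptional points to produce the multipliers, equality case of Riesz rearrangement for the symmetry of an arbitrary maximizer, and the observation that an inactive mass constraint kills $\gmm$; the general parameters $(\mu,\nu,\lmb)$ are then recovered from the normalized case by the scaling of Remark \ref{rmk_prop.of.G}(v), which you omit but which is routine. There is, however, one step that genuinely fails as written: your argument that ``$W\le 0$ would keep the patch-set from being bounded'' excludes $W<0$, and it excludes $W=0$ only in the subcase $\gmm=0$. If $W=0$ and $\gmm>0$, the set $\{\calG[\omg^*]>\gmm\}$ \emph{is} bounded (since $\calG[\omg^*]\to0$ at infinity by Lemma \ref{lem_gw:decay}), so no contradiction with finite mass arises. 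The paper rules this case out with a separate positivity identity (Remark \ref{rmk_W formula}, Lemma \ref{lem_W formula}): for a compactly supported patch of the form $\bfone_{\{\calG[\omg]-Wx_2-\gmm>0\}}$ one has
\[
W\,\nrm{\omg}_1\;=\;\int_{\bbR^2_+}\calG[\omg]_{x_2}\,\omg\q d\bfx\;=\;\frac{1}{2\pi}\int_{\bbR^2_+}\int_{\bbR^2_+}\frac{x_2+y_2}{|\bfx-\bfy^*|^2}\,\omg(\bfx)\,\omg(\bfy)\q d\bfy\,d\bfx\;>\;0,
\]
the locally singular part of the kernel cancelling by antisymmetry. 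Some such argument is indispensable here: $W>0$ is part of the statement of the corollary and is also what drives the bounded-support proof.

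A second, smaller issue: you impose the impulse \emph{equality} $\nrm{x_2\omg}_1=M$ on the relaxed class, so the weak-$L^2$ limit of your maximizing sequence a priori satisfies only $\nrm{x_2\omg^*}_1\le M$ and may fall outside your class; the appeal to ``a standard concentration--compactness argument'' to restore equality is exactly the point that would need proof. The paper avoids this entirely by relaxing to the inequality $\nrm{x_2\omg}_1\le M$ and then invoking the strict monotonicity of $E$ under vertical translation away from the axis (Remark \ref{rmk_prop.of.G}(iii), used in Lemma \ref{lem_two max.set are same}): a maximizer with slack in the impulse could be lifted slightly, remaining admissible while strictly increasing the energy. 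I recommend substituting that argument. Two further cosmetic corrections: the level set $\{\calG[\omg^*]-Wx_2-\gmm=0\}$ need not have measure zero --- what is actually used is that $\omg^*=-\lap\Psi=0$ a.e.\ on $\{\Psi=0\}$ for $\Psi\in H^2_{loc}$; and $\gmm\ge0$ follows not from ``boundary behavior of $\calG$'' but from the fact that $\gmm<0$ would force the infinite-measure strip $\{|W|x_2<-\gmm\}$ into the patch set, contradicting the mass bound.
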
\q
\begin{rmk}\label{rmk_prob.congruent} Let $\mu,\nu,\lmb>0$. Recall that the scaling map $\omg\mapsto\hat{\omg}$ given by 
$$
\hat{\omg}(\bfx)=\lmb^{-1}\cdot\omg(\lmb^{-1/2}\nu^{1/2}\bfx)
$$ is a bijective map from $S_{\mu,\nu,\lmb}$ onto $S_M$ where $M=\lmb^{1/2}\nu^{-3/2}\mu>0$. Moreover, under the scaling map $\omg\mapsto\hat{\omg},\q$ for the constants $W,\hat{W}>0$ and $\gmm,\hat{\gmm}\geq0$ satisfying 
$$\omg\q=\q\lmb\cdot\bfone_{\big\{\calG[\omg]-Wx_2-\gmm>0\big\}}\qqd \mbox{and}\qqd\hat{\omg}\q=\q\bfone_{\big\{\calG[\hat{\omg}]-\hat{W}x_2-\hat{\gmm}>0\big\}}\qqd \mbox{a.e.} \qd \mbox{in} \qd \bbR^2_+,$$
one can easily check the relations $$\hat{W}=W\cdot\lmb^{-1/2}\cdot\nu^{-1/2}\qqd\mbox{and}\qqd\hat{\gmm}=\gmm\cdot\nu^{-1}.$$
\end{rmk}

\subsubsection{Existence of maximizers in a larger set}\q

For the proof of Theorem \ref{thm_maximizer}, for each $M>0$ we define a larger admissible set $\tld{K}_M\supset K_M$ as 
$$
\tld{K}_M\q:=\q\Bigl\{\q\omg\in L^1(\bbR_+^2)\q:\q0\leq\omg\leq1\q\mbox{ a.e. }\mbox{ in }\q\bbR_+^2,\qd\nrm{x_2\omg}_1\leq M,\qd\nrm{\omg}_1\leq1\q\Bigl\},
$$ 
and
$$
\tld{I}_M\q:=\q\sup_{\omg\in\tld{K}_M}E(\omg)\q\geq\q I_M,\quad\quad\tld{S}_M\q:=\q\Big\{\q\omg\in \tld{K}_M\q:\q E(\omg)=\tld{I}_M\q\Big\}.
 $$ 
Note that $K_M$ is a set of characteristic functions having full impulse $M$, and $\tld{K}_M$ is a set of nonnegative, bounded functions for which the bound of impulse is $M$. We will prove the existence of a maximizer in $\tld{K}_M$ and show that the maximizer lies actually on $K_M$. This would imply that $S_M=\tld{S}_M$, and therefore $S_M$ is nonempty.

\begin{lem}\label{lem_existence of maximizer in larger set}
For each $M>0$, the set $\tld{S}_M$ is nonempty. In other words, there exists $\omg\in\tld{K}_M$ such that  
$$E(\omg)\q=\q\tld{I}_M.$$
\end{lem}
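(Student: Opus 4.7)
The plan is to apply the direct method of the calculus of variations together with Steiner symmetrization about the $x_2$-axis to secure the compactness needed for the limit. First I would verify finiteness: for any $\omg\in\tld{K}_M$, since $\nrm{\omg}_2\le\nrm{\omg}_\ift^{1/2}\nrm{\omg}_1^{1/2}\le 1$, \eqref{eq_energy:finite} yields $E(\omg)\le\tfrac{C_2}{2}\nrm{\omg}_1\nrm{\omg}_2^{1/2}\nrm{x_2\omg}_1^{1/2}\le\tfrac{C_2}{2}M^{1/2}$, so $\tld{I}_M\in(0,\ift)$. Fix a maximizing sequence $\{\omg_n\}\subset\tld{K}_M$ with $E(\omg_n)\to\tld{I}_M$. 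Next, replace each $\omg_n$ by its Steiner symmetrization $\omg_n^*$ obtained by taking the one-dimensional symmetric decreasing rearrangement of $x_1\mapsto\omg_n(x_1,x_2)$ at each height $x_2$. Rearrangement preserves $0\le\omg_n^*\le 1$, $\nrm{\omg_n^*}_1$, and $\nrm{x_2\omg_n^*}_1$, so $\omg_n^*\in\tld{K}_M$. Since $(x_1-y_1)\mapsto G(\bfx,\bfy)$ is even and decreasing in $|x_1-y_1|$ for fixed $(x_2,y_2)$, the Riesz rearrangement inequality (applied slice-wise) gives $E(\omg_n^*)\ge E(\omg_n)$, so I may assume each $\omg_n$ is Steiner symmetric.

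With $\nrm{\omg_n}_2\le 1$, Banach--Alaoglu yields a subsequence $\omg_n\weakto\omg'$ weakly in $L^2(\bbR^2_+)$. The pointwise bound $0\le\omg'\le 1$ follows from weak closedness of the convex set $\{0\le\cdot\le 1\}$. Testing against $\bfone_{B_R\cap\bbR^2_+}$ and $x_2\bfone_{B_R\cap\bbR^2_+}\in L^2(\bbR^2_+)$ and sending $R\to\ift$ via monotone convergence gives $\nrm{\omg'}_1\le 1$ and $\nrm{x_2\omg'}_1\le M$, so $\omg'\in\tld{K}_M$.

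The crux is the energy convergence $E(\omg_n)\to E(\omg')$, which I would split as
\[
2E(\omg_n)-2E(\omg')=\int_{\bbR^2_+}\calG[\omg_n](\omg_n-\omg')\,d\bfx+\int_{\bbR^2_+}\big(\calG[\omg_n]-\calG[\omg']\big)\omg'\,d\bfx.
\]
For a compact--tail decomposition, I would first establish a uniform tail bound: given $\eps>0$ there is $R>0$ such that $\int_{|\bfx|>R}\calG[\omg_n]\omg_n\,d\bfx<\eps$ for all $n$. The contribution from $\{x_2>R\}$ is controlled by $\nrm{\calG[\omg_n]}_\ift\cdot\int_{x_2>R}\omg_n\le C\cdot M/R$ using the uniform bound from \eqref{eq_gw:bdd} and the Markov-type estimate from the impulse constraint; the contribution from $\{|x_1|>R,\,x_2\le R\}$ is controlled by inserting the kernel bound $G(\bfx,\bfy)\le x_2 y_2/\pi|\bfx-\bfy|^2$ into the double integral and exploiting Steiner monotonicity in $x_1$. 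On any fixed ball $B_R$, Lemma~\ref{lem_gw:C1} gives $\{\calG[\omg_n]\}$ uniformly bounded in $C^{1,r}(B_R)$, so Arzel\`a--Ascoli produces a further subsequence with $\calG[\omg_n]\to\calG[\omg']$ uniformly on compacts (the identification of the limit uses $G(\bfx,\cdot)\in L^2$, giving pointwise convergence from weak $L^2$ convergence). Combined with tail decay of $\calG[\omg']$ from Lemma~\ref{lem_gw:decay} and the bound $\nrm{\calG[\omg']}_\ift<\ift$, both integrals above vanish in the limit: the second by the pairing of a uniformly converging sequence against a fixed $L^1$ function, and the first by further decomposing $\calG[\omg_n]=(\calG[\omg_n]-\calG[\omg'])+\calG[\omg']$ and using weak convergence $\omg_n\weakto\omg'$ against the $L^2$ function $\calG[\omg']$.

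The main obstacle is the uniform tail control in step three, specifically in the $x_1$-direction: a Steiner symmetric $L^1\cap L^\ift$ function need not be equi-integrable at $|x_1|=\ift$ uniformly in $n$, since the monotone rearrangement bound $\omg_n^*(x_1,x_2)\le m_n(x_2)/(2|x_1|)$ is not integrable. However, the algebraic decay of $G$ and the factor $x_2 y_2$ in its numerator provide enough extra smallness in the double integral to close the tail estimate. Once $E(\omg')=\tld{I}_M$ is established, we conclude $\omg'\in\tld{S}_M$, completing the proof.
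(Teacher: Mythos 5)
Your skeleton is the same as the paper's: bound $\tld{I}_M$ via \eqref{eq_energy:finite}, take a maximizing sequence, Steiner-symmetrize it, extract a weak $L^2$ limit $\omg'\in\tld{K}_M$, and conclude by showing $E(\omg_n)\to E(\omg')$. The difference is that the paper outsources the two nontrivial steps to \cite{AC2019}: the fact that symmetrization does not decrease the energy is \cite[Proposition 3.1]{AC2019}, and the energy convergence for weakly convergent, uniformly bounded Steiner-symmetric sequences is \cite[Lemma 3.5]{AC2019}. You instead re-derive both. Your rearrangement argument (slice-wise Riesz, using that $G$ is symmetric decreasing in $|x_1-y_1|$ for fixed heights) is exactly the standard proof of the cited proposition, so nothing is lost there.

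The one place where your plan leaves real work undone is the uniform tail estimate in the $x_1$-direction, which you correctly flag as the main obstacle but then only assert can be closed. It can be, and here is the missing argument: split $\int_{|x_1|>R}\calG[\omg_n]\,\omg_n\,d\bfx$ according to whether $|\bfx-\bfy|>L$ or $\le L$ in the inner integral. For the far part, $G(\bfx,\bfy)\le x_2y_2/(\pi L^2)$ gives a bound $M^2/(\pi L^2)$, uniform in $n$. For the near part, Steiner monotonicity gives $\omg_n(y_1,y_2)\le m_n(y_2)/(2|y_1|)$ with $m_n(y_2):=\int_\bbR\omg_n(t,y_2)\,dt$, so for $|x_1|>R>2L$ the local mass $\nrm{\omg_n}_{L^1(\{|y_1-x_1|\le L\})}\lesssim L/R$; feeding this into \eqref{eq_gw<theta} restricted to $B_L(\bfx)$ and then integrating against $\omg_n(\bfx)$ (using $\nrm{x_2^{1/2}\omg_n}_1\le M^{1/2}$ by Cauchy--Schwarz) yields a bound $C(LM/R)^{1/2}$. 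Choosing, say, $L=R^{1/2}$ makes both contributions vanish as $R\to\ift$ uniformly in $n$, which closes your compact--tail decomposition. Two minor points: the pairing of $\omg_n-\omg'$ against $\calG[\omg']$ cannot be done globally in $L^2$ since $\calG[\omg']$ is generally not square-integrable at infinity (it decays only like $x_2/|\bfx|^2$); you must cut off to a ball and use Lemma \ref{lem_gw:decay} for the tail, which you do indicate. And the uniform $C^{1,r}$ bounds on compacts needed for Arzel\`a--Ascoli should be noted to depend only on the uniformly bounded $L^1\cap L^\ift$ norms of $\omg_n$. With these additions your route is complete and is, in substance, a self-contained proof of \cite[Lemma 3.5]{AC2019}.
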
 

\begin{proof}\q Consider a maximizing sequence 
$\{\omg_n\}\subset\tld{K}_M$ , i.e. $$E(\omg_n)\to \tld{I}_M\qd\mbox{ as }n\to\ift.$$ Since $\nrm{\omg_n}_{L^2(\bbR_+^2)}\leq\nrm{\omg_n}_1^{1/2}\leq1$ for all $n\geq1,$ there exists a subsequence $\{\omg_{n_k}\}\subset\{\omg_n\}$ and $\omg\in L^2(\bbR_+^2)$ such that $\omg_{n_k}$ converges weakly to $\omg$ in $L^2(\bbR_+^2)$. For convenience, we assume $\omg_n\weakto\omg$ in $L^2(\bbR_+^2)$ as $n\to\ift$. It implies that $\omg\in \tld{K}_M.$ In other words, we have $$\q0\leq\omg\leq1\qd\mbox{ a.e. }\mbox{ in }\q\bbR_+^2,\quad\nrm{x_2\omg}_1\leq M,\quad\mbox{ and }\qd\nrm{\omg}_1\leq1.$$
It remains to prove that $|E(\omg_n)-E(\omg)|\to0$, which would imply that $E(\omg)=\tld{I}_M$ and $\omg\in\tld{S}_M$. The convergence $E(\omg_n)\to E(\omg)$ is obtained from the following lemmas, which are borrowed from \cite{AC2019}. Lemma \ref{AbeChoi_Prop3.1} is a standard argument for the Steiner symmetrization which is well-known from previous works; see, e.g., \cite[Appendix I]{FB74}, \cite[p.1053]{Tu83}.

\begin{lem}\label{AbeChoi_Prop3.1}\cite[Proposition 3.1]{AC2019}\qd For $\omg\geq0$ satisfying $\omg\in L^1\cap L^2(\bbR^2_+)$ and $x_2\omg\in L^1(\bbR^2_+)$, there exists $\omg^*\geq0$ such that $\omg^*(x_1,x_2)=\omg^*(-x_1,x_2)$, and for each $x>0$, the function $\omg^*(x_1,x_2)$ is non-increasing in $x_1>0.\q$ Moreover, $$\nrm{\omg^*}_q=\nrm{\omg}_q\qd 1\leq q\leq 2,$$
$$\nrm{x_2\omg^*}_1=\nrm{x_2\omg}_1,$$
$$E(\omg^*)\geq E(\omg).$$
\end{lem}

\begin{rmk}  $\omg^*$ in Lemma \ref{AbeChoi_Prop3.1} is defined to satisfy 
$|\set{\omg>\alp}|=|\set{\omg^*>\alp}|$ for all $\alp\geq0$. It implies that $0\leq\omg^*\leq1$. In particular, $\omg^*$ can be chosen to satisfy the Steiner symmetry condition in Definition \ref{def_Steiner sym}. \end{rmk}\q

By Lemma \ref{AbeChoi_Prop3.1}, we may assume that the maximizing sequence $\omg_n$ and the weak limit $\omg$ satisfy the Steiner symmetry condition in Definition \ref{def_Steiner sym}. Then the convergence $E(\omg_n)\to E(\omg)$ is obtained by the Lemma \ref{AbeChoi_Lem3.5} below.

\begin{lem}\label{AbeChoi_Lem3.5}\cite[Lemma 3.5]{AC2019}\qd Let $\{\omg_n\}$ be a sequence such that 
$$\sup_{n\geq1}\Big\{\nrm{\omg_n}_{L^1\cap L^2}+\nrm{x_2\omg_n}_{L^1}\Big\}<\ift,$$
$$\omg_n\weakto\omg\qd\mbox{ in }L^2(\bbR^2_+)\qd\mbox{ as }n\to\ift.$$ Assume that each $\omg_n$ satisfies the Steiner symmetry condition. Then, 
$E(\omg_n)\to E(\omg)\q \mbox{ as }n\to\ift.$
\end{lem}\q

As we obtained that $E(\omg)=\tld{I}_M$, we complete the proof of Lemma \ref{lem_existence of maximizer in larger set}.
\end{proof} 

\subsubsection{Maximizers in smaller set}

\begin{lem}\label{lem_two max.set are same}
    For each $M>0$, we have $$\tld{S}_M=S_M.$$
\end{lem}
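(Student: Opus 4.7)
\textbf{Proof plan for Lemma \ref{lem_two max.set are same}.} Since $K_M\subset\tld K_M$ we have $I_M\le\tld I_M$, so it suffices to show that every $\omg\in\tld S_M$ (nonempty by Lemma~\ref{lem_existence of maximizer in larger set}) actually lies in $K_M$; combined with the trivial inclusion $S_M\subset K_M\subset\tld K_M$ and the equality $E=I_M=\tld I_M$ thereon, this will yield $\tld S_M=S_M$. Thus I fix $\omg\in\tld S_M$ and aim to establish (i) $\nrm{x_2\omg}_1=M$ and (ii) $\omg\in\{0,1\}$ a.e.\ on $\bbR^2_+$.

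\emph{Step 1 (Saturation of impulse).} Suppose, for contradiction, $\nrm{x_2\omg}_1<M$. Since $\tld I_M>0$ forces $\nrm{\omg}_1>0$, the upward translate $\omg^h(x_1,x_2):=\omg(x_1,x_2-h)\bfone_{\{x_2>h\}}$ satisfies $\nrm{\omg^h}_1=\nrm{\omg}_1\le 1$ and $\nrm{x_2\omg^h}_1=\nrm{x_2\omg}_1+h\nrm{\omg}_1\le M$ for $h$ sufficiently small, hence $\omg^h\in\tld K_M$. A change of variables in the Green function representation of $E$ yields
\begin{equation*}
E(\omg^h)=\f{1}{8\pi}\int\!\!\int\log\Big(1+\f{4(x_2+h)(y_2+h)}{|\bfx-\bfy|^2}\Big)\omg(\bfx)\omg(\bfy)\,d\bfx d\bfy>E(\omg),
\end{equation*}
since $(x_2+h)(y_2+h)>x_2y_2$ whenever $x_2,y_2>0$ and $h>0$ (this is the strict form of Remark~\ref{rmk_prop.of.G}(iii)). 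This contradicts maximality of $\omg$.

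\emph{Step 2 (Bang-bang structure).} Suppose, for contradiction, $\Omg_*:=\{0<\omg<1\}$ has positive measure. Choose $\delta,R>0$ with $F:=\{\delta<\omg<1-\delta\}\cap B_R(0,0)$ of positive measure and not essentially contained in a single horizontal line (possible by Fubini applied to $\Omg_*$). Pick three pairwise disjoint subsets $F_1,F_2,F_3\subset F$ of positive measure whose data vectors $\big(|F_i|,\int_{F_i}x_2\,d\bfx\big)\in\bbR^2$ are pairwise non-parallel, and set $\eta:=a_1\bfone_{F_1}+a_2\bfone_{F_2}+a_3\bfone_{F_3}$ with a nontrivial $(a_1,a_2,a_3)\in\bbR^3$ chosen so that $\int_{\bbR^2_+}\eta\,d\bfx=0$ and $\int_{\bbR^2_+}x_2\eta\,d\bfx=0$; such a choice exists as the kernel of a $2\times 3$ linear system. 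For $|\veps|<\delta/\max_i|a_i|$ the perturbation $\omg+\veps\eta$ still lies in $\tld K_M$ (pointwise bounds, mass, and impulse all preserved). Expanding the quadratic functional,
\begin{equation*}
E(\omg+\veps\eta)=E(\omg)+\veps\int_{\bbR^2_+}\calG[\omg]\,\eta\,d\bfx+\veps^2 E(\eta),
\end{equation*}
maximality at both signs of $\veps$ forces both $\int\calG[\omg]\eta\,d\bfx=0$ and $E(\eta)\le 0$. But by Lemmas~\ref{lem_gw:C1} and \ref{lem_gw:decay}, $E(\eta)=\f{1}{4}\int_{\bbR^2}|k*\tld\eta|^2\,d\bfx>0$ whenever $\eta\not\equiv 0$, a contradiction. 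Hence $\omg\in\{0,1\}$ a.e., and combined with Step~1 we conclude $\omg\in K_M$, as required.

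\emph{Anticipated main obstacle.} The only delicate point is in Step~2: one must build a nontrivial bounded $\eta$ supported in $\{\delta<\omg<1-\delta\}$ that simultaneously kills the two linear functionals $\zt\mapsto\int\zt\,d\bfx$ and $\zt\mapsto\int x_2\zt\,d\bfx$. The required non-degeneracy — that the slice data $\big(|F_i|,\int_{F_i}x_2\big)$ cannot all be collinear in $\bbR^2$ — holds because $\Omg_*$ has positive planar measure and therefore, by Fubini, meets at least two distinct $x_2$-levels on sets of positive one-dimensional measure. All the remaining ingredients (convexity of $\tld K_M$, the quadratic structure of $E$, strict positive definiteness of the half-plane Green kernel, and the vertical monotonicity of $G$) are supplied directly by the preceding section.
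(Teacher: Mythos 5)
Your proof is correct, and Step 1 (saturating the impulse by translating upward and using the strict monotonicity of the Green kernel in the vertical variable) is exactly the paper's argument. Step 2, however, takes a genuinely different route. The paper perturbs one-sidedly with an \emph{arbitrary} sign-constrained $h$ corrected by two fixed profiles $h_1,h_2$, extracts the Euler--Lagrange inequalities with multipliers $W,\gmm$, concludes $\{0<\omg<1\}\subset\{\calG[\omg]-Wx_2-\gmm=0\}$ a.e., and then invokes the elliptic relation $-\lap\Psi=\omg$ together with the fact that $\nb\Psi$ and $\lap\Psi$ vanish a.e.\ on the level set $\{\Psi=0\}$ of an $H^2_{loc}$ function. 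You instead use a \emph{two-sided} perturbation supported entirely in $\{\dlt<\omg<1-\dlt\}$, which kills the linear term and forces $E(\eta)\le 0$, contradicting the strict positive definiteness of the quadratic form $E(\eta)=\f{1}{4}\int_{\bbR^2}|k*\tld{\eta}|^2\,d\bfx$ (Lemma \ref{lem_gw:decay} applied to the signed, bounded, compactly supported $\eta$). This is more elementary — it avoids the $H^2$ regularity and level-set argument entirely — and it is a clean instance of the classical bang-bang principle for maximizers of a positive-definite quadratic form over a convex set. What the paper's longer route buys is reusable machinery: essentially the same perturbation scheme with $h_1,h_2$ reappears in Proposition \ref{prop_maximizer is TW} to produce the multipliers $W,\gmm$ and the characterization $\omg=\bfone_{\{\calG[\omg]-Wx_2-\gmm>0\}}$, which your argument does not yield. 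One small remark: your ``pairwise non-parallel data vectors'' condition is superfluous — any three pairwise disjoint positive-measure subsets $F_1,F_2,F_3$ work, since the homogeneous $2\times 3$ system $\sum_i a_i|F_i|=\sum_i a_i\int_{F_i}x_2\,d\bfx=0$ automatically has a nontrivial solution, and disjointness plus positivity of the measures guarantees $\eta\not\equiv 0$.
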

\begin{proof}
    Let $M>0$. It suffices to show that $\tld{S}_M\subset K_M$. As the set $\tld{S}_M$ is nonempty by Lemma \ref{lem_existence of maximizer in larger set}, we choose any $\omg\in\tld{S}_M$.\\
    
    \noindent We will first prove that $\nrm{x_2\omg}_1=M$. Suppose that we have
$\nrm{x_2\omg}_1\q<\q M.$ Consider the translation of $\omg$ along $x_2$-axis of amount $\tau>0$, say $\omg_\tau$. Then for small $\tau>0$, we still have 
$\nrm{x_2\omg_\tau}_1\q<\q M$, and by Remark\ref{rmk_prop.of.G}-(iii) we have $$E(\omg_\tau)>E(\omg),$$ which contradicts our assumption that $\omg\in\tld{S}_M$. Therefore we have $\nrm{x_2\omg}_1=M$. \\

\noindent It remains to show that $\omg$ is a characteristic function. In other words, we will show that $$\big|\{0<\omg<1\}\big|\q=\q0,$$  which would imply that, for some measurable set $\Omg\subset \bbR^2_+$, we have $\omg=\bfone_\Omg$ a.e. in $\bbR^2_+$. We suppose that $$\big|\{0<\omg<1\}\big|>0.$$
Then there exists a small constant $\dlt_0>0$ such that $$\big|\{\dlt_0<\omg<1-\dlt_0\}\big|\q>\q0.$$ Observe that 
$\big|\{\dlt_0<\omg<1-\dlt_0\}\big|\q<\q\ift$ due to $\nrm{\omg}_1\leq1$. We can choose two functions $h_1, h_2\in (L^1\cap L^\ift)(\bbR^2_+)$ such that 
$$\supp{(h_i)}\q\subset\q\{\q\dlt_0<\omg<1-\dlt_0\q\}$$ for each $i\in\{1,2\}$ satisfying 
$$\int_{\bbR^2_+} h_1\q d\bfx=1,\qd \int_{\bbR^2_+} x_2h_1\q d\bfx=0,$$ 
$$\int_{\bbR^2_+} h_2\q d\bfx=0,\qd \int_{\bbR^2_+} x_2h_2\q d\bfx=1.$$ 
So far, note that $\dlt_0>0$ and $h_1,h_2\in L^1\cap L^\ift$ are fixed. For an arbitrary $\dlt\in(0,\dlt_0)$, we consider an arbitrary function $h\in (L^1\cap L^\ift)(\bbR^2_+)$ such that $\nrm{x_2h}_1<\ift$ and
\begin{eqnarray*}
    h\geq0&&\mbox{ on }\q\{\q0\leq\omg\leq\dlt\q\},\\
    h\leq0&&\mbox{ on }\q\{\q1-\dlt\leq\omg\leq1\q\}.
\end{eqnarray*}
We denote $$\eta\q:=\q h-\Bigg(\int_{\bbR^2_+} h\q d\bfx\Bigg)\q h_1-\Bigg(\int_{\bbR^2_+}x_2h\q d\bfx\Bigg)\q h_2.$$ Then we get $\eta\in (L^1\cap L^\ift)(\bbR^2_+)$ and $$\int_{\bbR^2_+}\eta\q d\bfx\q=\q\int_{\bbR^2_+} x_2\eta\q d\bfx=0.$$

 \noindent\textit{(Claim 1)} There exists a small constant $\eps_0>0$ such that, for each $\eps\in(0,\eps_0)$, we have $\q\omg+\eps\eta\q\in\q\tld{K}_M$.\\
 
 First, we will show that there exists a small constant $\eps_0>0$ such that, for each $\eps\in(0,\eps_0)$, we have $0\leq\q\omg+\eps\eta\leq1$. 
Observe that, on $\{0\leq\omg\leq\dlt_0\}$, we have $h_1=h_2=0$ and thus $\eta=h\geq0$ due to our assumption on $h$. Therefore $\omg+\eps\eta\geq0$ on $\{0\leq\omg\leq\dlt_0\}$. And by choosing small $\eps_0>0$, we have $$0\q\leq\q\omg+\eps\eta\q\leq\q\dlt_0+\eps_0\q\nrm{\eta}_{L^\ift}\q\leq\q1\qd\mbox{ for each }\q\eps\in(0,\eps_0),$$ on the set $\{0\leq\omg\leq\dlt_0\}$.
Similarly, for each $\eps\in(0,\eps_0)$, we have $0\leq\omg+\eps\eta\leq1$ on the set $\{1-\dlt_0\leq\omg\leq1\}$.
Lastly, by choosing $\eps_0<\dlt_0\q/\q\nrm{\eta}_{L^\ift}$, we obtain 
    $$0\leq\omg+\eps\eta\leq1\qd\mbox{ for each }\eps\in(0,\eps_0)$$ on the set $\{\dlt_0<\omg<1-\dlt_0\}.$ As we have $0\leq\omg+\eps\eta\leq1$ for each $\eps<\eps_0$, we have    
    $$\nrm{\omg+\eps\eta}_1=\int(\omg+\eps\eta)\q d\bfx=\int\omg\q d\bfx\leq1\qd\mbox{ and }\qd\nrm{x_2(\omg+\eps\eta)}_1=\int x_2(\omg+\eps\eta)\q d\bfx=\int x_2\omg\q d\bfx\leq M,$$ and therefore $\omg+\eps\eta\q\in\q\tld{K}_M.\q$ This completes the proof of \textit{(Claim 1)}.\\

\noindent Using the assumption that $\omg\in\tld{S}_M$, the following function $F:[0,\eps_0)\to\bbR$ attains its maximum at $\eps=0$,
$$F(\eps)\q:=\q E(\omg+\eps\eta)\q=\q\f{1}{2}\int_{\bbR^2_+}\int_{\bbR^2_+}G(\bfx,\bfy)\Big(\omg(\bfy)+\eps\eta(\bfy)\Big)\Big(\omg(\bfx)+\eps\eta(\bfx)\Big)\q d\bfy d\bfx.$$ 
Observe that $F$ is a polynomial of degree 2. Clearly, $F'(0)\leq0$. By defining the constants $$W:=\int_{\bbR^2_+}\calG[\omg]\q h_2\q d\bfx \qd\mbox{ and }\qd \gmm:=\int_{\bbR^2_+}\calG[\omg] \q h_1\q d\bfx,$$ we have
\begin{eqnarray*}
    0&\geq& F'(0)=\int_{\bbR^2_+}\int_{\bbR^2_+}G(\bfx,\bfy)\omg(\bfx)\eta(\bfy)\q d\bfy d\bfx\\
    &=&\int_{\bbR^2_+}\calG[\omg](\bfy)\q\eta(\bfy)\q d\bfy\\
    &=&\int_{\bbR^2_+}\calG[\omg](\bfy)\Bigg[h(\bfy)-\Bigg(\int_{\bbR^2_+}h(\bfx)\q d\bfx\Bigg)h_1(\bfy)-
    \Bigg(\int_{\bbR^2_+}x_2h(\bfx)\q d\bfx\Bigg)h_2(\bfy)\Bigg]d\bfy\\
    &=&\int_{\bbR^2_+}\calG[\omg](\bfx)\q h(\bfx)\q d\bfx-
    \underbrace{\Bigg(\int_{\bbR^2_+}\calG[\omg](\bfy)\q h_2(\bfy)\q d\bfy\Bigg)}_{=W}\cdot
    \int_{\bbR^2_+}x_2h(\bfx)\q d\bfx-
    \underbrace{\Bigg(\int_{\bbR^2_+}\calG[\omg](\bfy)\q h_1(\bfy)\q d\bfy\Bigg)}_{=\gmm}\cdot
    \int_{\bbR^2_+}h(\bfx)\q d\bfx\\
    &=&\int_{\bbR^2_+}\Big(\calG[\omg](\bfx)-Wx_2-\gmm\Big)h(\bfx)\q d\bfx
\end{eqnarray*}
Then  we split the integral into three terms:
$$
    0\geq\int_{\bbR^2_+}\Big(\calG[\omg]-Wx_2-\gmm\Big)\cdot h\q 
 d\bfx=\int_{0\leq\omg\leq\dlt}+\int_{\dlt<\omg<1-\dlt}+\int_{1-\dlt\leq\omg\leq1}.
$$

\noindent Recall that, for each $\dlt\in(0,\dlt_0)$, the function $h\in (L^1\cap L^\ift)(\bbR^2_+)$ is arbitrarily chosen just to satisfy 
$$    h\geq0\qd\mbox{ on }\q\{\q0\leq\omg\leq\dlt\q\}\qd\mbox{ and }\qd h\leq0\qd\mbox{ on }\q\{\q1-\dlt\leq\omg\leq1\q\}.$$
Therefore, for any $\dlt\in(0,\dlt_0)$, we obtain that

$$\begin{cases}
    \q\calG[\omg](x)-Wx_2-\gmm\leq0\qd \mbox{a.e.}&\mbox{ on }\q\{\q0\leq\omg\leq\dlt\q\},\\
    \q\calG[\omg](x)-Wx_2-\gmm=0\qd \mbox{a.e.}&\mbox{ on }\q\{\q\dlt<\omg<1-\dlt\q\},\\
    \q\calG[\omg](x)-Wx_2-\gmm\geq0\qd\mbox{a.e.}&\mbox{ on }\q\{\q1-\dlt\leq\omg\leq1\q\}.
\end{cases}$$

\noindent Due to $\{\q0<\omg<1\q\}=\displaystyle\bigcup_{\dlt\q\in\q(0,\dlt_0)}\{\q\dlt<\omg<1-\dlt\q\}$, we get
$$\begin{cases}
    \q\calG[\omg](x)-Wx_2-\gmm\leq0\qd \mbox{a.e.}&\mbox{ on }\q\{\q\omg=0\q\},\\
    \q\calG[\omg](x)-Wx_2-\gmm=0\qd \mbox{a.e.}&\mbox{ on }\q\{\q0<\omg<1\q\},\\
    \q\calG[\omg](x)-Wx_2-\gmm\geq0\qd \mbox{a.e.}&\mbox{ on }\q\{\q\omg=1\q\}.
\end{cases}$$
So we obtain $\{\q0<\omg<1\q\}\q\subset\q\set{\q\calG[\omg]-Wx_2-\gmm=0\q}$. Define $\Psi(\bfx):=\calG[\omg](\bfx)-Wx_2-\gmm$. It remains to prove that 
$$\{\q\Psi=0\q\}\q\subset\q\{\q\omg=0\q\}\qd \mbox{a.e.}\qd \mbox{in }\qd \bbR^2_+,$$ which would imply that $$\{\q0<\omg<1\q\}\q\subset\q\{\q\omg=0\q\}\qd \mbox{a.e.}\qd \mbox{in }\qd \bbR^2_+,\q$$ which contradicts to our assumption that $\big|\{0<\omg<1\}\big|>0$.\\

\noindent\textit{(Claim 2)} For each $\omg\in\tld{S}_M$, we have 
$\{\q\Psi=0\q\}\q\subset\q\{\q\omg=0\q\}\qd \mbox{a.e.}\qd \mbox{in }\q\bbR^2_+.$\\

By Lemma \ref{lem_gw:C1}, we have $\Psi\in H^2_{loc}(\bbR^2_+)$ and $-\lap\Psi=\omg$ in $\bbR^2_+$. Therefore we have 
$$\set{\q\Psi=0\q}\subset \set{\q\nb\Psi=0\q}
\subset\set{\q\lap\Psi=0\q}=\set{\q\omg=0\q}.$$
This completes the proof of \textit{(Claim 2)} and we finish the proof of Lemma \ref{lem_two max.set are same}. \end{proof}

\subsubsection{Every maximizer generates a traveling wave solution}\q

Note that Lemma \ref{lem_existence of maximizer in larger set} and Lemma \ref{lem_two max.set are same} imply that the set $S_M$ is nonempty. Proposition \ref{prop_maximizer is TW} says that each maximizer in $S_M$ generates a traveling wave solution to \eqref{eq_Euler eq.}. 
\begin{prop}\label{prop_maximizer is TW}
For any $M>0$ and for each $\omg\in S_M,$ there exist some constants $W, \gmm\geq0$ that are uniquely determined by $\omg$ satisfying that  
$$
\omg\q=\q\bfone_{\big\{\bfx\in\bbR^2_+\q:\q\calG[\omg](\bfx)-Wx_2-\gmm>0\big\}}\qd\q \mbox{a.e. }\qd { in} \qd \bbR^2_+.
$$
\end{prop}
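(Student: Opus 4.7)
The plan is to adapt the first-variation / Lagrange-multiplier argument in the proof of Lemma \ref{lem_two max.set are same}, which no longer applies verbatim because that lemma already forces $\omg$ to be bang-bang of the form $\bfone_A$; consequently any admissible perturbation $\eta$ with $\omg+\eps\eta\in\tld K_M$ for small $\eps>0$ must itself satisfy $\eta\le 0$ a.e.\ on $A$ and $\eta\ge 0$ a.e.\ on $A^c$, a much more restrictive class than the one used for intermediate-valued $\omg$.

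The cleanest admissible perturbations are \emph{balanced swaps}: for measurable $U\subset A$ and $V\subset A^c$ with $|U|=|V|$ and $\int_U x_2\,d\bfx=\int_V x_2\,d\bfx$, the function $\omg_\eps:=\omg+\eps(\bfone_V-\bfone_U)$ lies in $K_M\subset\tld K_M$ for all $\eps\in[0,1]$, so since $F(\eps):=E(\omg_\eps)$ is quadratic with $F(0)=I_M$ and $F(\eps)\le I_M$, the condition $F'(0)\le 0$ gives
\[
\int_V \calG[\omg]\,d\bfx\q\le\q\int_U \calG[\omg]\,d\bfx.
\]
I would next extract Lagrange multipliers by fixing two auxiliary swap functions $h_1=a_1\bfone_{V_0^1}-b_1\bfone_{U_0^1}$ and $h_2=a_2\bfone_{V_0^2}-b_2\bfone_{U_0^2}$ with $U_0^i\subset A$, $V_0^i\subset A^c$, $a_i,b_i>0$, normalized so that $\int h_1=1,\q\int x_2h_1=0,\q\int h_2=0,\q\int x_2h_2=1$ (existence of such $h_i$ follows from the Steiner symmetry of $\omg$ and the fact that both $A$ and $A^c$ meet a range of $x_2$-levels, since $|A|\le 1$ and $\int_A x_2\,d\bfx=M$). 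Given any test function $h\in L^\ift_c(\bbR^2_+)$ with $h\le 0$ on $A$ and $h\ge 0$ on $A^c$, an admissible combination of $h$ with $h_1,h_2$, together with an additional ``correction swap'' supported in $A$ or $A^c$ to restore the sign constraints (the case-splitting depending on the signs of $\int h$ and $\int x_2h$), would produce a perturbation whose first-order condition integrates to
\[
\int_{\bbR^2_+}(\calG[\omg]-Wx_2-\gmm)\,h\,d\bfx\q\le\q 0,\qquad W:=\int\calG[\omg]h_2\,d\bfx,\q\gmm:=\int\calG[\omg]h_1\,d\bfx.
\]
Varying the support of $h$ inside $A$ or $A^c$ would then yield the pointwise sign conditions $\Psi:=\calG[\omg]-Wx_2-\gmm\ge 0$ a.e.\ on $A$ and $\Psi\le 0$ a.e.\ on $A^c$.

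Once these sign conditions are in hand, the passage to the identity $\omg=\bfone_{\{\Psi>0\}}$ a.e.\ is exactly Claim 2 in the proof of Lemma \ref{lem_two max.set are same}: by Lemma \ref{lem_gw:C1} the odd extension $\tld\Psi$ lies in $H^2_{loc}(\bbR^2)$ with $-\lap\tld\Psi=\tld\omg$ a.e., so by the Stampacchia lemma $\nb\Psi=0$ and therefore $\omg=-\lap\Psi=0$ a.e.\ on $\{\Psi=0\}$; since $\omg=1$ on $A$, the set $A\cap\{\Psi=0\}$ is null, so $A\subset\{\Psi>0\}$ a.e., while $\{\Psi>0\}\subset A$ a.e.\ follows from $\Psi\le 0$ on $A^c$. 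The sign conditions $W,\gmm\ge 0$ would follow from the asymptotic decay $\calG[\omg](\bfx)\to 0$ as $|\bfx|\to\infty$ (Lemma \ref{lem_gw:decay}) together with $G(\bfx,\bfy)\to 0$ as $x_2\to 0^+$: if $W<0$ then $\Psi\to +\infty$ as $x_2\to\infty$, forcing $|A|=\infty$; if $\gmm<0$ then $\Psi>0$ on a neighborhood of $\rd\bbR^2_+$, which again contradicts $|A|\le 1$. Uniqueness of $(W,\gmm)$ follows from these sign conditions and the fact that both $A$ and $A^c$ have positive measure at a set of distinct $x_2$-values of positive one-dimensional measure.

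The main obstacle is the Lagrange-multiplier extraction: combining an arbitrary test function $h$ with the reference swaps $h_1,h_2$ while preserving the one-sided bounds $\eta\le 0$ on $A$, $\eta\ge 0$ on $A^c$ is the principal technical departure from the proof of Lemma \ref{lem_two max.set are same}, where the intermediate level set $\{0<\omg<1\}$ of positive measure permitted unconstrained perturbation directions. Careful case-splitting on the signs of $\int h$ and $\int x_2h$, together with additional correction swaps chosen inside $A$ (to inject negative mass) or $A^c$ (to inject positive mass), should suffice.
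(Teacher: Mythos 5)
Your overall strategy is the same as the paper's (first variation over sign-constrained perturbations in $\tld K_M$, extraction of Lagrange multipliers $W,\gmm$, the elliptic step $\{\Psi=0\}\subset\{\omg=0\}$ via $-\lap\tld\Psi=\tld\omg$, non-negativity of $W,\gmm$ from $|A|\le 1$, and uniqueness from two boundary heights), and the last three of these steps match the paper essentially verbatim. The genuine gap is in the multiplier extraction. Your fixed reference swaps $h_1=a_1\bfone_{V_0^1}-b_1\bfone_{U_0^1}$, $h_2=a_2\bfone_{V_0^2}-b_2\bfone_{U_0^2}$ are $\le 0$ on $A$ and $\ge 0$ on $A^c$, so the combination $\eta=h-(\int h)h_1-(\int x_2h)h_2$ satisfies the required constraints $\eta\le 0$ on $A$, $\eta\ge 0$ on $A^c$ only when $\int h\le 0$ and $\int x_2h\le 0$; for the other sign cases you must use swaps of the opposite orientation (positive on a subset of $A$, negative on a subset of $A^c$). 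But a differently oriented swap is a different function, so it produces a \emph{different} pair of multipliers $(W',\gmm')=(\int\calG[\omg]h_2',\int\calG[\omg]h_1')$, and nothing in your argument forces $(W',\gmm')=(W,\gmm)$. You then only obtain, e.g., $\calG[\omg]-Wx_2-\gmm\ge 0$ a.e.\ on $A$ (from $h$ supported in $A$, where both integrals are automatically $\le 0$) and $\calG[\omg]-W'x_2-\gmm'\le 0$ a.e.\ on $A^c$ (from $h\ge0$ supported in $A^c$), which does not yield $A=\{\calG[\omg]-Wx_2-\gmm>0\}$ for a single affine function. The proposed ``correction swaps'' cannot repair this: a correction supported only in $A$ (to inject negative mass) or only in $A^c$ necessarily perturbs $\int\eta$ and $\int x_2\eta$ away from zero, so it destroys the admissibility you are trying to preserve.

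The paper's resolution is exactly the idea your proposal is missing: it does not use fixed positive-measure swaps, but builds $h_{n,1}^{\pm},h_{n,2}^{\pm}$ out of normalized indicators of $B_{r_n}(\bfy)\cap\Omg$, $B_{r_n}(\bfy)\cap\Omg^c$, $B_{r_n}(\bfz)\cap\Omg$, $B_{r_n}(\bfz)\cap\Omg^c$ for two \emph{exceptional points} $\bfy,\bfz$ of the merely measurable set $\Omg$ (Definition \ref{def_excep.pt} and Proposition \ref{prop_density:nonempty} guarantee such points exist, with both $\Omg$ and $\Omg^c$ of positive measure in every small ball). Because both orientations are Dirac approximations at the \emph{same} two points, the multipliers $W_n=\int\calG[\omg]h_{n,2}$ and $\gmm_n=\int\calG[\omg]h_{n,1}$ converge, for either sign choice, to the common limits $W=\bigl(\calG[\omg](\bfy)-\calG[\omg](\bfz)\bigr)/(y_2-z_2)$ and the corresponding $\gmm$, by continuity of $\calG[\omg]$; passing to the limit in $0\ge\int(\calG[\omg]-W_nx_2-\gmm_n)h\,d\bfx$ then gives one pair $(W,\gmm)$ valid for every test function $h$. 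You would need to incorporate this (or an equivalent separation argument) to close the proof. Two smaller points: the balanced swap $\omg+\eps(\bfone_V-\bfone_U)$ lies in $\tld K_M$, not $K_M$, for $0<\eps<1$ (harmless since $S_M=\tld S_M$ by Lemma \ref{lem_two max.set are same}); and you should not invoke Steiner symmetry of $\omg$ here, since the proposition is asserted for every maximizer and the paper's proof of it uses no symmetry.
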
 
\begin{proof} Let $M>0$. We choose any $\omg=\bfone_\Omg\in S_M$ for some measurable set $\Omg\subset\bbR^2_+$. For the proof, we first assume the existence of the constants $W,\gmm\in\bbR$ satisfying that $$\Omg\q=\q\Big\{\calG[\omg]-Wx_2-\gmm>0\Big\}.$$ In \textit{(Claim 1)}, we will prove that $W,\gmm$ are unique. The existence and non-negativity of such constants will be proved later in \textit{(Claim 2)} and \textit{(Claim 3)} each.\\

\noindent \textit{(Claim 1)} If there exist some constants $W,\gmm\in\bbR$ satisfying that $\Omg=\Big\{\calG[\omg]-Wx_2-\gmm>0\Big\},$ then they are uniquely determined by $\omg$.\\

Assume that we have $\Omg=\Big\{\calG[\omg]-Wx_2-\gmm>0\Big\}$ for some constants $W,\gmm\in\bbR$. We choose any two points $\bfy,\bfz\in\rd \Omg\cap\bbR^2_+$ such that $y_2\neq z_2$. 
Observe that  $$\rd \Omg\cap\bbR^2_+\q\subset\q\Big\{\q\calG[\omg]-Wx_2-\gmm=0\q\Big\}$$ by the continuity of $\calG[\omg]$. Therefore 
$$\calG[\omg](\bfy)-Wy_2-\gmm\q=\q\calG[\omg](\bfz)-Wz_2-\gmm\q=\q0$$ and thus
\begin{equation}\label{eq_W,gmm are unique}
   W\q=\q\f{\calG[\omg](\bfy)-\calG[\omg](\bfz)}{y_2-z_2},
\qd\gmm\q=\q-\Big(\f{z_2}{y_2-z_2}\Big)\q\calG[\omg](\bfy)+\Big(\f{y_2}{y_2-z_2}\Big)\q\calG[\omg](\bfz). 
\end{equation}
It implies that $W,\gmm$ are explicitly expressed by $\omg$, so they are unique. This completes the proof of \textit{(Claim 1)}.\\

\noindent \textit{(Claim 2)} There exist some constants $W,\gmm\in\bbR$ such that we have $\Omg=\Big\{\calG[\omg]-Wx_2-\gmm>0\Big\}$.\\

The main part of the proof is to generate perturbation of $\omg=\bfone_\Omg$ in the set $\tld{K}_M$ to discover the specific form of $\Omg$. As we can see in the proof of the uniqueness of $W,\gmm\geq0$, these constants depend on the boundary points of $\Omg$. However, we yet only know that $\Omg$ is a measurable set, which means that we cannot use the definition of boundary points as usual. We naturally want to define boundary-like points for each measurable set which are called \textit{exceptional points}. This notion appeared in several works including \cite{three.lattices-points.problems,exceptional.points.for.Lebesgue.density.theorem,metric.darboux.property,Stability.of.Hill.vortex}.

\begin{defn}\label{def_excep.pt}\cite[Definition 5.4]{Stability.of.Hill.vortex}\q 
    Let $U\subset\bbR^N$ be open. For any (Lebesgue) measurable set $E\subset U$, the density $D_e(E)$ of $E$ is the collection of $\bfx\in U$ such that 
$$\liminf_{r\searrow0}\f{|B_r(\bfx)\cap E|}{|B_r(\bfx)|}=1.
$$
Similarly, define the dispersion $D_i(E)$ of $E$ by the collection of $\bfx\in U$ such that 
$$\limsup_{r\searrow0}\f{|B_r(\bfx)\cap E|}{|B_r(\bfx)|}=0.$$
Set $\calE(E):= U\backslash(D_e(E)\cup D_i(E))$, the set of exceptional points. \end{defn}\q

The following two propositions say that the set of exceptional points has zero measure, but in most cases, it is a nonempty set. The proof of Proposition \ref{prop_density.dispersion} can be found in \cite[Theorem 7.13]{Measure.and.integral.Zygmund.Wheeden}, and the proof of Proposition \ref{prop_density:nonempty} is given in \cite[Appendix B]{Stability.of.Hill.vortex}.

\begin{prop}\label{prop_density.dispersion}\cite[Lemma 5.5]{Stability.of.Hill.vortex}\qd $  E=D_e(E)\qd \mbox{and}\qd  U\backslash E=D_i(E) \qd a.e.$
\end{prop} 

\begin{prop}\label{prop_density:nonempty}\cite[Lemma 5.6]{Stability.of.Hill.vortex}\qd 
    Let $ U\subset\bbR^N$ be a non-empty connected open set. Then for any measurable set $E\subset U$ with $|E|,| U\backslash E|\in(0,\ift]$, we have $\calE(E)\neq\emptyset$.
\end{prop} \q

\noindent By Proposition \ref{prop_density:nonempty}, we can obtain exceptional points $\bfy,\bfz\in\bbR^2_+$ s.t. $y_2>z_2$,  and $\{r_n\}\searrow0$ such that 
\begin{eqnarray*}
    \big|B_{r_n}(\bfy)\cap \Omg\big|,\q\q\big| B_{r_n}(\bfy)\cap \Omg^c\big|&\in&(0,\ift),\\
    \big|B_{r_n}(\bfz)\cap \Omg\big|,\q\q\big|B_{r_n}(\bfz)\cap  \Omg^c\big|&\in&(0,\ift),\qd\mbox{ for each } n\geq1.
\end{eqnarray*} 
To show this, we apply Proposition \ref{prop_density:nonempty}. We first choose $l>0$ such that both $\Omg\cap\{x_2>l\}$ and $\Omg\cap\{x_2<l\}$ have finite and positive measures. In order to obtain such $\bfy\in\bbR^2_+$, put $$ U:=\{x_2>l\},\qd E:=\Omg\cap\{x_2>l\}$$ and choose $\bfy\in\calE(E)$ so that $y_2>l$. 
Then for any decreasing sequence of positive numbers $\{r_n\}\searrow0$, we can simply check that 
$$\big|B_{r_n}(\bfy)\cap  \Omg\big|,\q\q\big|B_{r_n}(\bfy)\cap \Omg^c\big|\q\in\q(0,\ift),\qd\mbox{ for each } n\geq1.$$ 
Similarly, we can choose $\bfz\in\bbR^2_+$ such that $0<z_2<l$ and do the same process.\\

\noindent Define sequences of sets of positive measures near $\bfy,\bfz\in\bbR^2_+$ as 
\begin{eqnarray*}
Y_n^+:=B_{r_n}(\bfy)\cap  \Omg,&&Y_n^-:=B_{r_n}(\bfy)\cap \Omg^c,\\
Z_n^+:=B_{r_n}(\bfz)\cap  \Omg,&&Z_n^-:=B_{r_n}(\bfz)\cap  \Omg^c.
\end{eqnarray*}
and sequences of compactly supported and bounded functions as $$f_n^\pm\q:=\q\f{1}{|Y_n^\pm|}\bfone_{Y_n^\pm},\qd g_n^\pm\q:=\q\f{1}{|Z_n^\pm|}\bfone_{Z_n^\pm}.$$ 
These functions will play as approximations of Dirac delta at $\bfy$ and $\bfz$ each, as $n\to\ift$. Recalling the proof of \textit{(Claim 1)} (see the identity \eqref{eq_W,gmm are unique}), define the constants 
\begin{eqnarray*}
W&:=&\Bigg(\f{1}{y_2-z_2}\Bigg)\q\calG[\omg](\bfy)\q-\q\Bigg(\f{1}{y_2-z_2}\Bigg)\q\calG[\omg](\bfz)\q\in\bbR, \qquad \gmm := -\q\Bigg(\f{z_2}{y_2-z_2}\Bigg)\q\calG[\omg](\bfy)\q+\q\Bigg(\f{y_2}{y_2-z_2}\Bigg)\q\calG[\omg](\bfz)\q\in\bbR.
\end{eqnarray*}
As $n\to\ift$, we have 
$$\int x_2f_n^\pm(\bfx)\q d\bfx\to y_2\qd \mbox{and} \qd \int x_2g_n^\pm(\bfx)\q d\bfx\to z_2.$$
We denote $\q\displaystyle y_n^\pm:=\int x_2f_n^\pm(\bfx)\q d\bfx\q\in\bbR$ and $\q\displaystyle z_n^\pm:=\int x_2g_n^\pm(\bfx)\q d\bfx\q\in\bbR$. Moreover, by the continuity of $\calG[\omg]$, 
$$\int \calG[\omg]\q f_n^\pm\q d\bfx \q\to\q \calG[\omg](\bfy)\qd \mbox{and} \qd \int \calG[\omg]\q g_n^\pm\q d\bfx \q\to\q \calG[\omg](\bfz)$$
as $n\to\ift$. In order to construct sequences converging to $W$ and $\gmm$, define the sequences of coefficients as 
$$a_n^\pm:=\f{y_n^\mp}{y_n^\mp-z_n^\pm},\qd 
b_n^\pm:=\f{z_n^\pm}{y_n^\mp-z_n^\pm},\qd
c_n^\pm:=\f{1}{y_n^\pm-z_n^\mp}.$$
Note that they are all positive for large $n\geq1$. Assume that $a_n^\pm, b_n^\pm, c_n^\pm>0$, for all $n\geq1$. Using these sequences, we construct functions as 
$$h_{n,1}^\pm:=a_n^\pm g_n^\pm-b_n^\pm f_n^\mp,\qd h_{n,2}^\pm:=c_n^\pm(f_n^\pm-g_n^\mp).$$
These functions will be used to construct the perturbation for $\omg=\bfone_\Omg$ in the set $\tld{K}_M$. Observe that
$$\int h_{n,1}^\pm\q d\bfx=1,\qd \int x_2 h_{n,1}^\pm\q d\bfx=0,$$
$$\int h_{n,2}^\pm\q d\bfx=0,\qd \int x_2 h_{n,2}^\pm\q d\bfx=1.$$
Observe that the signs $+$ and $-$ in each coefficient and function are chosen to satisfy 
\begin{eqnarray*}
h_{n,1}^+,\q h_{n,2}^+\q\geq0&\mbox{on}&\Omg,\\
h_{n,1}^+,\q h_{n,2}^+\q\leq0&\mbox{on}&\Omg^c,
\end{eqnarray*}
and 
\begin{eqnarray*}
h_{n,1}^-,\q h_{n,2}^-\q\leq0&\mbox{on}&\Omg,\\
h_{n,1}^-,\q h_{n,2}^-\q\geq0&\mbox{on}&\Omg^c.
\end{eqnarray*}

\noindent Now we are ready to prove that $\q \Omg=\Big\{\calG[\omg]-Wx_2-\gmm>0\Big\}\q$ by using perturbation of the function $\omg=\bfone_\Omg$ in the set $\tld{K}_M$. Consider any function $h\in (L^1\cap L^\ift)(\bbR^2_+)$ such that $\nrm{x_2h}_1<\ift$ and
$$h\geq0\qd \mbox{on} \q \Omg^c,\qd h\leq0\qd \mbox{on}\q \Omg.$$
For any fixed function $h$, we choose the sign superscripts for $h_{n,1}^\pm$ and $h_{n,2}^\pm$ as follows. We define $h_{n,1}$ and $h_{n,2}$ to satisfy
$$h_{n,1}:=\left\{\begin{matrix}
    \q h_{n,1}^+\qd \mbox{if} \qd \int h\geq0,\\
    \q h_{n,1}^-\qd \mbox{if} \qd \int h<0,
\end{matrix}\right. $$ and 
$$h_{n,2}:=\left\{\begin{matrix}
    \q h_{n,2}^+\qd \mbox{if} \qd \int x_2h\geq0,\\
    \q h_{n,2}^-\qd \mbox{if} \qd \int x_2h<0.
\end{matrix}\right. $$ Moreover we define 
$$W_n:=\int\calG[\omg]\q h_{n,2}\q d\bfx,\qd \gmm_n:=\int\calG[\omg]\q h_{n,1} \q d\bfx.$$ Observe that $W_n\to W$ and $\gmm_n\to\gmm$ as $n\to\ift$.
We set 
$$\eta_n:=h-\Bigg(\int h\q d\bfx\Bigg)\q h_{n,1}-\Bigg(\int x_2 h\q d\bfx\Bigg)\q h_{n,2}.$$
Naturally $\eta_n\leq0$ on $\Omg$ and $\eta_n\geq0$ on $\Omg^c$. Moreover, observe that $\int\eta_n=\int x_2\eta_n=0$. As $\eta_n\in L^\ift$, by the similar arguments in the proof of \textit{(Claim 1)} of \ref{lem_two max.set are same}, we may show that there exists a small $\eps_0>0$ such that 
$$\omg+\eps\eta_n\q\subset\q\tld{K}_M\qd 
\mbox{ for each } \eps\in(0,\eps_0),$$ 
and using the fact that $E(\cdot)$ is maximized by $\omg$ in the admissible set $\tld{K}_M$, we obtain
$$0\geq\f{d}{d\eps}E(\omg+\eps\eta_n)\Big|_{\eps=0}=\int \calG[\omg]\q\eta_n\q d\bfx=\int\Big(\calG[\omg]-W_n x_2-\gmm_n \Big)\q h\q d\bfx,$$ for each $n\geq1$. We can take the limit $n\to\ift$ to obtain  
$$0\q\geq\q\int\Big(\calG[\omg]-W x_2-\gmm\Big)\q h\q d\bfx.$$
Since $h$ is arbitrary, we have 
\begin{eqnarray*}
    \calG[\omg]-Wx_2-\gmm\geq0\qd \mbox{a.e. on}\qd \Omg,\\
    \calG[\omg]-Wx_2-\gmm\leq0\qd \mbox{a.e. on}\qd \Omg^c.
\end{eqnarray*}
Put $\Psi:=\calG[\omg]-Wx_2-\gmm\q$ and observe $$\{\q\Psi<0\q\}\q\subset\q \Omg^c \q\subset\q \{\q\Psi\leq0\q\}.$$ Since $-\lap\Psi=\omg$ a.e. in $\bbR^2_+$, we have $$\q\{\q\Psi=0\q\}\q\subset\q\{\q\omg=0\q\}\q=\q \Omg^c\q \mbox{ a.e. }$$ and thus $\{\q\Psi\leq0\q\}\q=\q \Omg^c.$ In conclusion, we have 
$$\Omg\q=\q\big\{\q\Psi>0\q\big\}\q=\q\big\{\q\calG[\omg]-W x_2-\gmm>0\q\big\}\qd \mbox{ a.e. }$$
This completes the proof of \textit{(Claim 2)}.\\

\noindent\textit{(Claim 3)} The constants $W,\gmm\in\bbR$ are non-negative. \\

If $\gmm<0$, observe that 
$$\Bigl\{\q\bfx\in\bbR^2_+\q:\q |W|x_2<-\gmm\q\Bigl\}\q\subset\q
\Bigl\{\q\bfx\in\bbR^2_+\q:\q Wx_2<-\gmm\q\Bigl\}\q=\q
\Bigl\{\q\calG[\omg]-Wx_2-\gmm>0\q\Bigl\}\q=\q \Omg.$$ 
It implies that $|\Omg|=\ift$, which contradicts to $|\Omg|\leq1$. Therefore we have $\gmm\geq0$. If $W<0$, we have 
$$\Bigl\{\q\bfx\in\bbR^2_+\q:\q -Wx_2>\gmm \q\Bigl\}\q\subset \q 
\Bigl\{\q\calG[\omg]-Wx_2-\gmm>0\q\Bigl\}\q=\q \Omg,$$
which also contradicts to $|\Omg|\leq1$. It follows that $W\geq0$. This completes the proof of \textit{(Claim 3)} and we finish the proof of Proposition \ref{prop_maximizer is TW}. 
\end{proof}\q

Lemma \ref{lem_lack of mass to gmm=0} below says that, if any maximizer in $K_M$ does not attain the full mass, then the corresponding flux constant $\gmm$ vanishes.
\begin{lem}\label{lem_lack of mass to gmm=0}
For any $M>0$ and for each $\omg\in S_M,$ if we have $\nrm{\omg}_1<1$, then we get $$\gmm=0$$ where $\gmm=\gmm(\omg)\geq0$ is given in Proposition \ref{prop_maximizer is TW}.
\end{lem}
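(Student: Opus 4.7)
The strategy is to re-run the perturbation argument of Proposition \ref{prop_maximizer is TW}, now exploiting the strict slack $\nrm{\omg}_1<1$: since the mass bound is no longer active, its Lagrange multiplier (which is precisely $\gmm$) should vanish. Concretely, for an arbitrary $h\in(L^1\cap L^\infty)(\bbR^2_+)$ with $\nrm{x_2 h}_1<\infty$, $h\geq 0$ on $\Omg^c$, and $h\leq 0$ on $\Omg$, I would recycle only the impulse-fixing auxiliary function $h_{n,2}^\pm$ from the proof of Proposition \ref{prop_maximizer is TW} (and not $h_{n,1}^\pm$), setting
$$\eta_n\q:=\q h-\Bigg(\int_{\bbR^2_+}x_2h\q d\bfx\Bigg)\q h_{n,2},$$
with the sign $h_{n,2}=h_{n,2}^{\pm}$ chosen according to the sign of $\int x_2h$ as before. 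The sign analysis from that proof then gives $\eta_n\leq 0$ on $\Omg$ and $\eta_n\geq 0$ on $\Omg^c$, while $\int x_2\eta_n=0$ by construction.

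For small $\eps>0$, I next verify that $\omg+\eps\eta_n\in\tld{K}_M$: the pointwise bounds $0\leq\omg+\eps\eta_n\leq 1$ and the impulse identity carry over verbatim from the proof of Proposition \ref{prop_maximizer is TW}, while the crucial new observation is that $\nrm{\omg+\eps\eta_n}_1\leq\nrm{\omg}_1+\eps\nrm{\eta_n}_1\leq 1$ still holds for small $\eps$, thanks to the strict inequality $\nrm{\omg}_1<1$ and $\eta_n\in L^1$. Since $\omg$ maximizes $E$ on $\tld{K}_M=K_M\cup(\tld{K}_M\setminus K_M)$ (cf.\ Lemma \ref{lem_two max.set are same}), differentiating $E(\omg+\eps\eta_n)$ at $\eps=0$ and passing to the limit $n\to\infty$ (using $\int\calG[\omg]\q h_{n,2}\q d\bfx\to W$, as in the proof of Proposition \ref{prop_maximizer is TW}) yields
$$\int_{\bbR^2_+}\Big(\calG[\omg](\bfx)-Wx_2\Big)\q h(\bfx)\q d\bfx\q\leq\q 0.$$
Varying $h$ with support in $\Omg^c$ then forces $\calG[\omg]-Wx_2\leq 0$ a.e.\ on $\Omg^c$.

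Suppose for contradiction that $\gmm>0$. Write $u:=\calG[\omg]-Wx_2$, which is continuous by Lemma \ref{lem_gw:C1}. By Proposition \ref{prop_maximizer is TW}, up to null sets $\Omg=\{u>\gmm\}$ and $\Omg^c=\{u\leq\gmm\}$, and $|\Omg|>0$ since $\nrm{x_2\omg}_1=M>0$. Fix any $\bfx_0$ with $u(\bfx_0)>\gmm$; by boundedness of $\calG[\omg]$ together with Lemma \ref{lem_gw:decay}, one has $u(x_{0,1},x_2)\to -\infty$ along the vertical ray $x_2\to\infty$ when $W>0$, and $u\to 0$ along any ray to infinity when $W=0$. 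In either case the intermediate value theorem applied to $u$ on a continuous path from $\bfx_0$ to infinity produces a point where $u$ takes a value in $(0,\gmm)$, and continuity then promotes this to a nonempty open set $\{0<u<\gmm\}$, which necessarily has positive Lebesgue measure. This open set is a.e.\ contained in $\Omg^c$ (since $\Omg=\{u>\gmm\}$ a.e.), yet on it $u>0$, contradicting $u\leq 0$ a.e.\ on $\Omg^c$. Hence $\gmm=0$. The main obstacle is less in setting up the Lagrange-multiplier heuristic than in converting it into a rigorous pointwise contradiction, for which the continuity and decay of $\calG[\omg]$ are essential.
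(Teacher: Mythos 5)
Your proof is correct, and its core idea is exactly the paper's: exploit the strict slack $\nrm{\omg}_1<1$ to drop the mass-fixing function $h_{n,1}$ from the perturbation, keeping only $\hat\eta_n=h-(\int x_2h)\,h_{n,2}$, so that the first-variation inequality comes out as $\int(\calG[\omg]-Wx_2)h\,d\bfx\le 0$ with no $\gmm$-term. Where you diverge is the endgame. The paper uses the full class of test functions $h$ (signed on both $\Omg$ and $\Omg^c$) to conclude $\omg=\bfone_{\{\calG[\omg]-Wx_2>0\}}$ a.e., and then invokes the uniqueness of the pair $(W,\gmm)$ from \textit{(Claim 1)} of Proposition \ref{prop_maximizer is TW} (two representations of $\Omg$, with flux constants $\gmm$ and $0$, force $\gmm=0$); this is shorter and reuses machinery already in place. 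You instead extract only the one-sided bound $\calG[\omg]-Wx_2\le 0$ a.e.\ on $\Omg^c$ and derive a contradiction with $\gmm>0$ via the intermediate value theorem and the decay/boundedness of $\calG[\omg]$, producing a nonempty open subset of $\{0<\calG[\omg]-Wx_2<\gmm\}\subset\Omg^c$ on which the sign condition fails. Your route is slightly longer but self-contained in that it does not need the uniqueness claim, and you are right to treat the case $W=0$ separately, since positivity of $W$ (Lemma \ref{lem_W formula}) is only established after this lemma in the paper's ordering. Both arguments are sound.
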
 
\begin{proof} Let $M>0$. We choose any $\omg=\bfone_\Omg\in S_M$ where we have $$\Omg\q=\q\Big\{\calG[\omg]-Wx_2-\gmm>0\Big\}$$ where the constants $W,\gmm\geq0$ are given in Proposition \ref{prop_maximizer is TW}. We assume that $\nrm{\omg}_1<1$. We now follow the same process in the proof of \textit{(Claim 2)} in Proposition \ref{prop_maximizer is TW} in a way that, instead of using $\eta_n$, we use the following function 
$$\hat{\eta}_n:=h-\Big(\int x_2h\q d\bfx\Big)h_{n,2}$$ with the same definitions for $h, h_{n,2}$ in the proof of \textit{(Claim 2)} in Proposition \ref{prop_maximizer is TW}. As $\nrm{\omg}_1<1$, we have $$\nrm{\omg+\eps\hat{\eta}_n}_1\leq\nrm{\omg}_1+\eps\nrm{\hat{\eta}_n}_1\leq1\qd$$ for small $\eps>0$. Moreover, as $\hat{\eta}_n\in L^\ift$, for small $\eps>0$ we have $0\leq\omg+\eps\hat{\eta}_n\leq1$, and due to $\int x_2\hat{\eta}_n \q d\bfx=0,$ we get $$ \nrm{x_2(\omg+\eps\hat{\eta}_n)}_1
=\int_{\bbR^2_+}x_2\omg\q d\bfx+\eps\int_{\bbR^2_+}x_2\hat{\eta}_n\q d\bfx=M.$$  This implies that there exists a small $\eps_0>0$ such that 
$$\omg+\eps\hat{\eta}_n\q \in \q \tld{K}_M$$ for each $\eps\in (0,\eps_0)$. Then we have 
$$0\q\geq\q \f{d}{d\eps} E(\omg+\eps\hat{\eta}_n)\q=\int \Big(\calG[\omg]-W_n x_2\Big)\q h\q d\bfx$$ with the same definition for $W_n$ with the proof of \textit{(Claim 2)} in Proposition \ref{prop_maximizer is TW}. It follows that  
$$\omg\q=\q \bfone_{\{\q\calG[\omg]-Wx_2>0\q\}}\qd \mbox{ a.e. \q in }\q\bbR^2_+.$$ By the uniqueness of the constant $\gmm\geq0$ given in Proposition \ref{prop_maximizer is TW}, we complete the proof of Lemma \ref{lem_lack of mass to gmm=0}. 
\end{proof}\q

\subsubsection{Positivity of traveling speed }\q

\begin{rmk}\label{rmk_W formula} For any $\omg\in (L^1\cap L^\ift)(\bbR^2_+)$, whose odd-extension $\tld{\omg}$ is a traveling wave solution of the equation \eqref{eq_Euler eq.}, in the form $$\omg(\bfx)=\omg_0(\bfx-(W,0)t)$$ for some $\omg_0\in (L^1\cap L^\ift)(\bbR^2_+)$ having bounded support with some constant $W\in\bbR$, its traveling speed $W$ is determined by $\omg_0$ by the identity in \cite[p.1062]{Tu83}:
\begin{equation}\label{eq_W formula}
    W\q=\q\nrm{\omg_0}_1^{-1}\cdot\int_{\bbR^2_+}\calG[\omg_0]_{x_2}\cdot\omg_0\q d\bfx\q=\q
\nrm{\omg_0}_1^{-1}\cdot\int_{\bbR^2_+}\int_{\bbR^2_+}\f{1}{2\pi}\f{x_2+y_2}{|\bfx-\bfy^*|^2}\q\omg_0(\bfy)\q\omg_0(\bfx)\q d\bfy d\bfx
\end{equation} where $\bfy^*=(y_1,-y_2)$. Indeed, for the first equality, observe that
$$W\int_{\bbR^2_+}\omg_0\q d\bfx=\f{d}{dt}\int_{\bbR^2_+}x_1\omg\q d\bfx=
\int_{\bbR^2_+}x_1\omg_t\q d\bfx=-\int_{\bbR^2_+}x_1(\bfu\cdot\nb\omg)\q d\bfx=\int_{\bbR^2_+}u_1\cdot\omg\q d\bfx$$ where we used integration by parts. The second equality is obtained by considering obvious cancellation.
\end{rmk}\q
\begin{lem}\label{lem_W formula}
 For any $M>0$ and for each $\omg\in S_M$, we have $$W\q>\q0$$ where $\q W=W(\omg)$ is given in Proposition \ref{prop_maximizer is TW}. 
\end{lem}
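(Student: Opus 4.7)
My plan is to argue by contradiction using the explicit formula \eqref{eq_W formula} for $W$ recorded in Remark~\ref{rmk_W formula}. Suppose $\omg \in S_M$ yields $W = 0$, with the corresponding $\gmm \geq 0$ from Proposition~\ref{prop_maximizer is TW}. The idea is that once we apply \eqref{eq_W formula}, strict positivity of its kernel on $\bbR^2_+ \times \bbR^2_+$ will immediately contradict $W = 0$. To invoke \eqref{eq_W formula} rigorously, however, we must first check that $\omg$ has bounded support, which requires splitting into sub-cases according to the value of $\gmm$.

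I would first rule out $W = \gmm = 0$. In this sub-case Proposition~\ref{prop_maximizer is TW} gives $\omg = \bfone_{\{\calG[\omg] > 0\}}$ almost everywhere. Since $\nrm{x_2 \omg}_1 = M > 0$ forces $\omg \not\equiv 0$, and $G(\bfx, \bfy) > 0$ everywhere on $\bbR^2_+ \times \bbR^2_+$, we conclude $\calG[\omg](\bfx) > 0$ for every $\bfx \in \bbR^2_+$. Hence $\omg \equiv 1$ a.e.\ in $\bbR^2_+$, which is absurd since $\nrm{\omg}_1 \leq 1$ while $|\bbR^2_+| = \infty$.

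Thus the remaining case is $W = 0$, $\gmm > 0$. Lemma~\ref{lem_gw:decay} gives $\calG[\omg] \to 0$ at infinity, so $\Omg := \{\calG[\omg] - \gmm > 0\}$ is bounded, i.e.\ $\omg$ has compact support. Writing $\Psi := \calG[\omg] - \gmm$ and extending oddly, the divergence-free field $\bfU = \nabla^\perp \Psi$ is tangent to the level set $\{\Psi = 0\} = \partial \Omg$, so $\bfU \cdot \nabla \tld{\omg} = 0$ in the distributional sense, and $\tld{\omg}$ is a stationary (hence traveling-wave-with-speed-$0$) solution of \eqref{eq_Euler eq.}. The hypotheses of Remark~\ref{rmk_W formula} are now met, and its formula becomes
$$0 \;=\; W \,\nrm{\omg}_1 \;=\; \int_{\bbR^2_+}\int_{\bbR^2_+} \frac{1}{2\pi}\, \frac{x_2 + y_2}{|\bfx - \bfy^*|^2}\, \omg(\bfy)\, \omg(\bfx)\, d\bfy\, d\bfx,$$
where $\bfy^* = (y_1, -y_2)$. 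The kernel is strictly positive on $\bbR^2_+ \times \bbR^2_+$, and $\omg = \bfone_\Omg$ with $|\Omg| > 0$, so the right-hand side is strictly positive---contradiction.

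The only genuinely delicate step is the first one: ruling out the degenerate configuration $W = \gmm = 0$ so that we can secure compact support of $\omg$. Once compact support is in hand, the Pohozaev-type identity \eqref{eq_W formula} applies essentially for free, and the strict positivity of its kernel on $\bbR^2_+ \times \bbR^2_+$ closes the argument.
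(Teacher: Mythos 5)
Your proof is correct and follows essentially the same route as the paper's: assume $W=0$, deduce $\gmm>0$ from the mass bound $\nrm{\omg}_1\le 1$, use Lemma \ref{lem_gw:decay} to get bounded support, and then contradict $W=0$ via the strictly positive kernel in the identity \eqref{eq_W formula} of Remark \ref{rmk_W formula}. You simply spell out the details (the exclusion of $W=\gmm=0$ and the verification that the stationary patch satisfies the hypotheses of the remark) that the paper leaves implicit.
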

\begin{proof}
    Let $M>0$ and choose any $\omg\in S_M$. We suppose $W=0$. Then $\gmm>0$ due to the assumption $\nrm{\omg}_{1}\leq1$. By Lemma \ref{lem_gw:decay}, it is clear that the set $\set{\q\calG[\omg]-\gmm>0\q}$ is bounded. Then we can derive $W>0$ by the identity \eqref{eq_W formula} in Remark \ref{rmk_W formula}, which contradicts our assumption that $W=0$. 
\end{proof}

\subsubsection{Bounded support of maximizers}\q

In the following lemma, we show that each maximizer $\omg$ has a bounded support. 
\begin{lem}\label{lem_w:cpt_supp}
    For any $M>0$ and for each $\omg$ in $S_M$, the set $\big\{\calG[\omg]-Wx_2-\gmm>0\big\}$ is bounded, for the corresponding constant $W,\gmm$ given in Proposition \ref{prop_maximizer is TW}. Moreover, there exists a constant $C>0$ such that, for any $M>0$ and for each $\omg\in S_M$, we have  
\begin{equation}\label{eq_est of gw/x_2}
    \Bigg|\f{\calG[\omg](\bfx)}{x_2}\Bigg|\q\leq\q C\q\Big( \q\eps^{-1}\nrm{\omg}_1
+\eps^{1/3}\nrm{\omg}_{L^1\big([x_1-\eps, x_1+\eps]\times(0,\ift)\big)}^{1/3}\q\Big),
\end{equation}
for any $\eps>0$ and $\bfx=(x_1,x_2)\in\bbR^2_+$.
\end{lem}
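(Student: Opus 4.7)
The plan is to establish the pointwise estimate \eqref{eq_est of gw/x_2} first, and then to use it (together with the decay at infinity from Lemma \ref{lem_gw:decay}) to deduce boundedness of $\set{\calG[\omg]-Wx_2-\gmm>0}$. For the estimate, we exploit that the odd extension $\tld{\calG[\omg]}$ vanishes on $\set{x_2=0}$ and is $C^{1,r}$ by Lemma \ref{lem_gw:C1}, so the fundamental theorem of calculus gives $\calG[\omg](x_1, x_2)/x_2 = x_2^{-1}\int_0^{x_2} u_1(x_1, t)\, dt$, where $u_1:=\rd_{x_2}\tld{\calG[\omg]}$ is the first component of the Biot--Savart velocity $\bfu=k*\tld{\omg}$. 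It thus suffices to bound $|u_1(x_1,t)|$ uniformly for $t\in[0, x_2]$. Using the kernel $k$ and the reflection inequality $|(x_1, t)-(y_1,-y_2)|\geq|(x_1, t)-(y_1,y_2)|$ (valid for $t, y_2>0$), we obtain $|u_1(x_1,t)|\leq\pi^{-1}\int_{\bbR^2_+} \omg(\bfy)/|(x_1,t)-\bfy|\, d\bfy$. Splitting this integral at radius $\eps$, the far piece is at most $\pi^{-1}\eps^{-1}\nrm{\omg}_1$; the near piece is handled by H\"older with exponents $(3,3/2)$, where $\omg\leq1$ yields $\nrm{\omg}_{L^3(B_\eps)}^3\leq\nrm{\omg}_{L^1(B_\eps)}$ while a direct polar computation gives $\nrm{1/|(x_1,t)-\cdot|}_{L^{3/2}(B_\eps)}\leq C\eps^{1/3}$. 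Since the projection of $B_\eps((x_1,t))\cap\bbR^2_+$ onto the $x_1$-axis is contained in $[x_1-\eps, x_1+\eps]$, this yields the desired near-field bound $C\eps^{1/3}\nrm{\omg}_{L^1([x_1-\eps,x_1+\eps]\times(0,\ift))}^{1/3}$.

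For boundedness, Proposition \ref{prop_maximizer is TW} and Lemma \ref{lem_W formula} give $\omg=\bfone_{\set{\calG[\omg]>Wx_2+\gmm}}$ with $W>0$ and $\gmm\geq0$. In the case $\gmm>0$, the condition $\calG[\omg]>\gmm$ on the support combined with Lemma \ref{lem_gw:decay} immediately yields boundedness. For $\gmm=0$, the inequality $Wx_2<\calG[\omg]$ together with the uniform bound $\calG[\omg]\leq C(M)$ coming from \eqref{eq_gw:bdd} (valid since $\nrm{\omg}_1,\nrm{\omg}_2\leq1$ and $\nrm{x_2\omg}_1=M$) confines the support to the strip $\set{x_2<C(M)/W}$. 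To bound the $x_1$-direction, we then apply \eqref{eq_est of gw/x_2} with $\eps:=3C\nrm{\omg}_1/W$ (so the first term on the right-hand side equals $W/3$) and note that since $\omg\in L^1(\bbR^2_+)$ the slab mass $\nrm{\omg}_{L^1([x_1-\eps,x_1+\eps]\times(0,\ift))}$ tends to $0$ as $|x_1|\to\ift$, forcing the right-hand side of \eqref{eq_est of gw/x_2} strictly below $W$ and hence excluding such $\bfx$ from the support.

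The main technical point is the choice of the $L^3\times L^{3/2}$ duality in the near-field H\"older step: it is precisely the patch interpolation $\nrm{\omg}_{L^3}^3\leq\nrm{\omg}_{L^1}$ (available because $\omg\leq1$) that produces the sharp $\eps^{1/3}\nrm{\omg}_{L^1(\text{slab})}^{1/3}$ scaling in \eqref{eq_est of gw/x_2}, which in turn will be essential for the small-impulse analysis in Section \ref{sec:small-impulse}.
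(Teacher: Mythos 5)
Your proof is correct and follows essentially the same route as the paper's: confinement to a horizontal strip via the $L^\infty$ bound on $\calG[\omg]$ and $W>0$, then the $\eps$-split of the Riesz-type integral with the patch interpolation $\nrm{\omg}_{L^3}^3\le\nrm{\omg}_{L^1}$ and the choice $\eps\sim\nrm{\omg}_1/W$ so that the slab mass forces decay in $x_1$. The only (harmless) differences are that you average via the fundamental theorem of calculus where the paper invokes the mean value theorem, and you split the cases $\gmm>0$ and $\gmm=0$ where the paper treats both at once through the inclusion $\set{\calG[\omg]-Wx_2-\gmm>0}\subset\set{\calG[\omg]-Wx_2>0}$.
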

\begin{rmk}\label{rmk_Gw/x_2 decays}
    We can deduce from the estimate \eqref{eq_est of gw/x_2} in Lemma \ref{lem_w:cpt_supp} below that $\big|\calG[\omg](\bfx)/x_2\big|\to0$ as $|x_1|\to\ift$ where the convergence is uniform in $x_2$. This estimate will be used later in the proofs of Theorem \ref{thm_boundry is continuous} and Lemma \ref{lem_size of w}. 
\end{rmk}

\begin{proof}[Proof of Lemma \ref{lem_w:cpt_supp}]\q Let $M>0$. As $S_M$ is nonempty, we choose any  $\omg\in S_M$. By Proposition \ref{prop_maximizer is TW}, we have
    $$\omg=\bfone_{\big\{\calG[\omg]-Wx_2-\gmm>0\big\}}
    \qd\mbox{ in }\bbR^2_+$$ for some unique constants $W=W(\omg)>0$ and $\gmm=\gmm(\omg)\geq0$. By \eqref{eq_gw:bdd} in Lemma \ref{lem_gw:estiate}, we have $\calG[\omg]\in L^\ift(\bbR^2_+)$. Therefore we obtain 
    $$\Big\{\q\calG[\omg]-Wx_2-\gmm>0\q\Big\}\q\subset\q 
    \Big\{\q\calG[\omg]-Wx_2>0\q\Big\}\q\subset\q
    \Big\{\q x_2<\f{\nrm{\calG[\omg]}_{L^\ift}}{W}\q\Big\}.$$
    It remains to prove that there exists a constant $L>0$ such that 
    $$\Big\{\q\calG[\omg]-Wx_2-\gmm>0\q\Big\}\q\subset\q
    \Big\{\q|x_1|\leq L\q\Big\}.$$ 
    Observe that 
    $$\Big\{\q\calG[\omg]-Wx_2-\gmm>0\q\Big\}\q\subset\q
    \Big\{\q\calG[\omg]/x_2>W\q\Big\}.$$ By the mean value theorem, for each $\bfx\in\bbR^2_+$, there exists a constant $c=c(\bfx)\in(0,x_2)$ such that 
    $$\f{\calG[\omg](\bfx)}{x_2}=
    \f{\calG[\omg](\bfx)-\calG[\omg](x_1,0)}{x_2}=\calG[\omg]_{x_2}(x_1,c).$$ Here we have 
    $$\calG[\omg]_{x_2}(x_1,c)\q=\q\f{1}{2\pi}\int_{\bbR^2}\f{(y_2-c)}{(y_1-x_1)^2+(y_2-c)^2}\q\tld{\omg}(\bfy)\q d\bfy$$ where $\tld{\omg}$ is the odd-extension of $\omg$ to $\bbR^2$. Note that, for each $\eps>0$, we have 
    $$\Big|\calG[\omg]_{x_2}(x_1,c)\Big|\q\lesssim 
    \left[\int_{B^c}+\int_B\right] \q  \f{1}{|(y_1,y_2)-(x_1,c)|}\q|\tld{\omg}(\bfy)|\q d\bfy $$ where $B:=B_\eps(x_1,c)$. The first integral satisfies 
    $$\int_{B^c}\f{|\tld{\omg}(\bfy)|}{|(y_1,y_2)-(x_1,c)|}\q d\bfy\q\leq\q\f{2}{\eps}\q\nrm{\omg}_1.$$ For the second integral, by H\"older's inequality, we have 
    $$\int_B\f{|\tld{\omg}(\bfy)|}{|(y_1,y_2)-(x_1,c)|}\q d\bfy\q\lesssim\q\Bigg(\int_0^\eps r^{-1/2}dr\Bigg)
^{2/3}\nrm{\omg}_{L^3(B)}\q=\q\Bigg(\int_0^\eps r^{-1/2}dr\Bigg)
^{2/3}\nrm{\omg}_{L^1(B)}^{1/3}\q\lesssim\q \eps^{1/3}\nrm{\omg}_{L^1\big([x_1-\eps, x_1+\eps]\q\times\q(0,\ift)\big)}^{1/3}. $$
In sum, we obtain \eqref{eq_est of gw/x_2} as we have 
$$\Bigg|\f{\calG[\omg](\bfx)}{x_2}\Bigg|\q\leq\q C_0\q\Big( \q\eps^{-1}\nrm{\omg}_1
+\eps^{1/3}\nrm{\omg}_{L^1\big([x_1-\eps, x_1+\eps]\times(0,\ift)\big)}^{1/3}\q\Big)$$
for some absolute constant $C_0>0$. Moreover, note that the right-hand side does not depend on $x_2$. Here we put $$\eps=\f{3C_0\nrm{\omg}_1}{W}$$ to have 
$$\Big|\f{\calG[\omg](\bfx)}{x_2}\Big|\q\leq\q \f{W}{3}+C_0\eps^{1/3}\nrm{\omg}_{L^1([x_1-\eps, x_1+\eps]\times(0,\ift))}^{1/3}.$$ Due to $\omg\in L^1(\bbR^2_+)$, the last term tends to 0 as $|x_1|\to\ift$, uniformly in $x_2$. So there exists a constant $L=L(\omg)>0$ such that, for any $\bfx\in\bbR^2_+$,  
$$|x_1|>L\q\Rightarrow\q \Bigg|\f{\calG[\omg](\bfx)}{x_2}\Bigg|\q\leq\q \f{W}{3}+\f{W}{3}<W,
$$ and thus 
$$\Big\{\q\calG[\omg]-Wx_2-\gmm>0\q\Big\}\q\subset\q
    \Big\{\q\calG[\omg]/x_2>W\q\Big\}\q\subset\q\Big\{\q|x_1|\leq L\q \Big\}.$$
    This completes the proof.
\end{proof}\q

\begin{rmk}
    In the proof above, we essentially showed that, for any $0\leq f\in (L^1\cap L^\ift)(\bbR^2_+)$ with $\nrm{x_2f}_1<\ift$ and for each $W>0$, the set $\set{\q\bfx\in\bbR^2_+\q:\q\calG[f](\bfx)-Wx_2>0\q}$ is bounded in $\bbR^2_+$.
\end{rmk}\q


\begin{proof}[Proof of Theorem \ref{thm_maximizer}]\q Let $M>0$. By Lemma \ref{lem_existence of maximizer in larger set} and Lemma \ref{lem_two max.set are same}, the set $S_M$ is nonempty. Then for any $\omg\in S_M$, by Proposition \ref{prop_maximizer is TW}, we have $$\omg\q=\q1_{\big\{\calG[\omg]-Wx_2-\gmm>0\big\}}\qd\q \mbox{a.e. }\qd { in} \qd \bbR^2_+$$ for some constants $W,\q\gmm\geq0$ which are uniquely determined by $\omg$, and by Lemma \ref{lem_W formula}, we have $W>0$. Since $E(\omg)$ is the absolute maximum of $E(\cdot)$, we may assume that $\omg$ satisfies the Steiner symmetry condition in Definition \ref{def_Steiner sym} up to translation. Lemma \ref{lem_w:cpt_supp} implies that $\omg$ has a bounded support, up to measure zero set. Lastly, by Lemma \ref{lem_lack of mass to gmm=0}, if we have $\nrm{\omg}_1<1$, then we get $\gmm=0$.
\end{proof}\q
	
\subsection{Estimate on maximal energy}\q

  Using the specific form of the maximizers given in Theorem \ref{thm_maximizer}, we obtain the following crucial identity concerning the maximal energy, traveling speed, and impulse. 
\begin{lem}\label{lem_WM=energy} 
    For any $M>0$, we have 
    \begin{equation}\label{eq_energy and W}
        I_M=\f{3}{4}WM+\f{1}{2}\gmm\nrm{\omg}_1,
    \end{equation}
    for each $\omg\in S_M$ with the constants $W=W(\omg)>0,\q \gmm=\gmm(\omg)\geq0$ in Theorem \ref{thm_maximizer}. 
\end{lem}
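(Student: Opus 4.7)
Since Theorem \ref{thm_maximizer} gives the explicit representation $\omg = \bfone_\Omg$ with $\Omg = \{\calG[\omg] - Wx_2 - \gmm > 0\}$, I set $\Psi := \calG[\omg] - Wx_2 - \gmm$ and substitute $\calG[\omg] = \Psi + Wx_2 + \gmm$ inside the energy functional to obtain
\[
I_M \q=\q E(\omg) \q=\q \f{1}{2}\int_\Omg \calG[\omg]\,d\bfx \q=\q \f{1}{2}\int_\Omg \Psi\,d\bfx \q+\q \f{W}{2}\int_\Omg x_2\,d\bfx \q+\q \f{\gmm}{2}|\Omg| \q=\q \f{1}{2}\int_\Omg \Psi\,d\bfx \q+\q \f{WM}{2} \q+\q \f{\gmm}{2}\nrm{\omg}_1,
\]
using $\int_\Omg x_2\,d\bfx = M$ and $|\Omg| = \nrm{\omg}_1$. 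The statement thus reduces to the Pohozaev-type identity $\int_\Omg \Psi\,d\bfx = WM/2$.

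The plan is to derive this via the classical Pohozaev trick applied to the half-plane Dirichlet problem. Let $\psi$ and $\tld{\omg}$ denote the odd extensions of $\calG[\omg]$ and $\omg$ to $\bbR^2$; by Lemma \ref{lem_gw:C1} we have $\psi\in C^{1,r}(\bbR^2)\cap H^2_{loc}(\bbR^2)$ with $-\lap\psi = \tld{\omg}$ a.e.\ in $\bbR^2$, and by Lemma \ref{lem_w:cpt_supp} the function $\tld{\omg}$ has compact support. Since $\tld{\omg}$ has vanishing integral (odd symmetry), a standard multipole expansion shows $\psi = O(|\bfx|^{-1})$ and $\nb\psi = O(|\bfx|^{-2})$ at infinity. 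Multiplying $-\lap\psi = \tld{\omg}$ by $\bfx\cdot\nb\psi$ and using the pointwise divergence identity
\[
(-\lap f)(\bfx\cdot\nb f)\q=\q-\nb\cdot\Bigl(\tfrac{1}{2}x_1(f_{x_1}^2 - f_{x_2}^2) + x_2 f_{x_1}f_{x_2},\q \tfrac{1}{2}x_2(-f_{x_1}^2 + f_{x_2}^2) + x_1 f_{x_1}f_{x_2}\Bigr)
\]
valid for $f\in C^2$, combined with a standard approximation of $\psi$ by smooth $\psi_n$ on a large ball $B_{3R}(0)$ and a cutoff $\theta_R\in C^\ift_c(\bbR^2)$, and letting $n\to\ift$ then $R\to\ift$, will yield $\int_{\bbR^2}\tld{\omg}(\bfx\cdot\nb\psi)\,d\bfx = 0$. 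By oddness in $x_2$ of both $\tld{\omg}$ and $\psi$, this folds to $\int_{\bbR^2_+}\omg\,(\bfx\cdot\nb\calG[\omg])\,d\bfx = 0$.

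On $\Omg$ we have $\nb\calG[\omg] = \nb\Psi + W(0,1)$, so the previous identity becomes
\[
0 \q=\q \int_\Omg (\bfx\cdot\nb\Psi)\,d\bfx \q+\q W\int_\Omg x_2\,d\bfx \q=\q \int_\Omg (\bfx\cdot\nb\Psi)\,d\bfx \q+\q WM.
\]
To evaluate the remaining integral, I introduce $K := \max(\Psi,0)$, which is Lipschitz with bounded support (coinciding with $\Omg$) and satisfies $\nb K = \bfone_\Omg\,\nb\Psi$ a.e. The divergence theorem on $\bbR^2_+$, together with the fact that the outward normal $\bfn = (0,-1)$ on $\rd\bbR^2_+$ makes $\bfx\cdot\bfn\equiv 0$, gives
\[
\int_\Omg(\bfx\cdot\nb\Psi)\,d\bfx \q=\q \int_{\bbR^2_+}\bfx\cdot\nb K\,d\bfx \q=\q -\int_{\bbR^2_+}(\nb\cdot\bfx)\,K\,d\bfx \q=\q -2\int_\Omg \Psi\,d\bfx.
\]
Combining the last two displays produces $\int_\Omg \Psi\,d\bfx = WM/2$, and substituting into the energy decomposition yields $I_M = \tfrac{3}{4}WM + \tfrac{1}{2}\gmm\nrm{\omg}_1$.

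The main technical delicacy is justifying the Pohozaev step with merely $H^2_{loc}\cap C^{1,r}$ regularity of $\psi$ together with only algebraic (not integrable-at-infinity) decay of $|\nb\psi|^2$; this is handled by the smoothing-plus-cutoff procedure described above, where the boundary flux through $|\bfx|=R$ is controlled by $\nrm{\nb\psi}_{L^\ift(\rd B_R)}^2\cdot R^{-1}\cdot R \to 0$ thanks to the $O(|\bfx|^{-2})$ decay. All other steps are elementary algebra and the divergence theorem.
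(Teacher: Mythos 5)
Your proof is correct and follows essentially the same route as the paper's: both rest on the Pohozaev-type identity $\int_{\bbR^2}\tld{\omg}\,(\bfx\cdot\nb\psi)\,d\bfx=0$ (obtained from the same pointwise divergence identity with cutoff and smoothing), the fold to $\bbR^2_+$ by symmetry, and the truncation $K=(\calG[\omg]-Wx_2-\gmm)_+$ with $\nb K=\omg\cdot(\nb\calG[\omg]-(0,W))$ and $\int\bfx\cdot\nb K=-2\int K$. The only cosmetic difference is that you first decompose the energy to reduce the claim to $\int_\Omg\Psi\,d\bfx=WM/2$, whereas the paper substitutes $\int_{\bbR^2_+}K\,d\bfx=2I_M-WM-\gmm\nrm{\omg}_1$ at the end; the content is identical.
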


\begin{rmk}
    The identity in the above lemma can be essentially found in Lemma 9 of \cite{Burton.1988}. It is a so-called Poho\v{z}aev-type identity (see \cite{Pohozaev.1965}). We present the proof below for completeness. For vortex rings, we refer to Lemma 3.2 of \cite{FT81} (also see Proposition 5.13 of \cite{Stability.of.Hill.vortex}).
\end{rmk}

\begin{proof}[Proof of Lemma \ref{lem_WM=energy}]\q Let $M>0$. As $S_M$ is nonempty, we take any $\omg\in S_M$. Put $\omg=\bfone_{\big\{ \calG[\omg]-Wx_2-\gmm>0\big\}}$ for the constants $W=W(\omg)>0$ and $\gmm=\gmm(\omg)\geq0$ in Theorem \ref{thm_maximizer}.
Let $\tld{\omg}$ be the odd extension of $\omg$ to $\bbR^2$ and denote $\psi:=\tld{\calG[\omg]}$, the odd extension of $\calG[\omg]$. We first make the following claim.\\

\noindent\textit{(Claim)} We have  $\int_{\bbR^2}\tld{\omg}\q(\bfx\cdot\nb\psi)\q d\bfx\q=\q 0.$\\

We first observe that, for any scalar function $f\in C^2(\bbR^2)$, we have 
$$\lap f\cdot(\bfx\cdot\nb f) \q=\q
\nb\cdot\Big(\q\f{1}{2}x_1\big(f_{x_1}^2-f_{x_2}^2\big)+
x_2f_{x_1}f_{x_2},\q\q 
\f{1}{2}x_2\big(-f_{x_1}^2+f_{x_2}^2\big)+
x_1f_{x_1}f_{x_2}\q\Big).$$ Knowing that $\tld{\omg}$ is compactly supported by Lemma \ref{lem_w:cpt_supp} and $\psi\in H^2_{loc}(\bbR^2)$ with the relation $-\lap\psi=\tld{\omg}$ by Lemma \ref{lem_gw:C1}, the approximation to smooth function in the bounded domain in $\bbR^2$ gives  $$\int_{\bbR^2}\tld{\omg}\q(\bfx\cdot\nb\psi)\q d\bfx\q=\q 0.$$  This completes the proof of \textit{(Claim)}.\\

\noindent Since the function $\tld{\omg}\q(\bfx\cdot\nb\psi)$ is an even function, we get 
$$\int_{\bbR^2_+}\omg\q(\bfx\cdot\nb\calG[\omg])\q d\bfx=0.$$ Here, by the relation $\omg=\bfone_{\set{\calG[\omg]-Wx_2-\gmm>0}},$ observe that $$\omg\cdot(\calG[\omg]-Wx_2-\gmm)\q=\q(\calG[\omg]-Wx_2-\gmm)_+$$ where $s_+:=\max\set{s,0},\q s\in\bbR.\q$ We define $K:=(\calG[\omg]-Wx_2-\gmm)_+$, and using the chain rule (see \cite[Theorem 7.8]{G-B.elliptic.pde}), we observe that $K\in H^1_{loc}(\bbR^2_+)$  with the relation $$\nb K=\omg\cdot(\nb\calG[\omg]-(0,W)).$$ Note that we have 
    $$0\q=\q \int_{\bbR^2_+}\omg\q(\bfx\cdot\nb\calG[\omg])\q d\bfx\q=\q
    \int_{\bbR^2_+} \bfx\cdot\nb K\q d\bfx+
    \int_{\bbR^2_+}Wx_2\omg\q d\bfx$$ where 
    $$ \int_{\bbR^2_+} \bfx\cdot\nb K\q d\bfx=-2 \int_{\bbR^2_+}K\q d\bfx$$ using approximation to smooth function and integration by parts. As we know that  
    $$\int_{\bbR^2_+} K\q d\bfx=\int_{\bbR^2_+}\omg\cdot \Big(\calG[\omg]-Wx_2-\gmm\Big)\q d\bfx=2I_M-WM-\gmm\nrm{\omg}_1 \qd\mbox{ and }\qd \int_{\bbR^2_+}Wx_2\omg\q d\bfx=WM,$$ we get \eqref{eq_energy and W}.
\end{proof}\
By the scaling given in Remark \ref{rmk_prop.of.G}-(v) and Remark \ref{rmk_prob.congruent}, we have the following corollary.
\begin{cor}\label{cor_WM=energy}
    For any $\mu,\nu,\lmb>0$, we have 
    \begin{equation}\label{eq_(general)energy and W}
        I_{\mu,\nu,\lmb}=\f{3}{4}W\mu+\f{1}{2}\gmm\nrm{\omg}_1,
    \end{equation}
    for each $\omg\in S_{\mu,\nu,\lmb}$ with the constants $W=W(\omg)>0,\q \gmm=\gmm(\omg)\geq0$ in Corollary \ref{cor_maximizer}. 
\end{cor}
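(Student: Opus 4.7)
The plan is to reduce the general three-parameter statement to the single-parameter identity already proved in Lemma \ref{lem_WM=energy} via the explicit scaling map recorded in Remark \ref{rmk_prop.of.G}-(v) and Remark \ref{rmk_prob.congruent}. Concretely, I fix $\mu,\nu,\lmb>0$ and an arbitrary $\omg\in S_{\mu,\nu,\lmb}$, set $M:=\lmb^{1/2}\nu^{-3/2}\mu$, and consider the rescaled function $\hat\omg(\bfx):=\lmb^{-1}\omg(\lmb^{-1/2}\nu^{1/2}\bfx)$. By Remark \ref{rmk_prop.of.G}-(v), the map $\omg\mapsto\hat\omg$ carries $S_{\mu,\nu,\lmb}$ bijectively onto $S_M$, and $E(\hat\omg)=\nu^{-2}E(\omg)$, so in particular
\[
I_M \;=\; \nu^{-2}\,I_{\mu,\nu,\lmb}.
\]

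Next I would apply Lemma \ref{lem_WM=energy} to $\hat\omg\in S_M$ to obtain
\[
I_M \;=\; \tfrac{3}{4}\,\hat W\,M \;+\; \tfrac{1}{2}\,\hat\gmm\,\nrm{\hat\omg}_1,
\]
where $\hat W=W(\hat\omg)$ and $\hat\gmm=\gmm(\hat\omg)$ are the constants associated to $\hat\omg$ by Theorem \ref{thm_maximizer}. The relations recorded in Remark \ref{rmk_prob.congruent} give
\[
\hat W \;=\; W\,\lmb^{-1/2}\nu^{-1/2}, \qquad \hat\gmm \;=\; \gmm\,\nu^{-1},
\]
and a direct change of variables in the integral defining the $L^1$-norm yields $\nrm{\hat\omg}_1=\nu^{-1}\nrm{\omg}_1$.

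Substituting these four identities into the displayed equation for $I_M$ and simplifying, each term acquires a factor $\nu^{-2}$:
\[
\nu^{-2}\,I_{\mu,\nu,\lmb} \;=\; \tfrac{3}{4}\,W\,\mu\,\nu^{-2} \;+\; \tfrac{1}{2}\,\gmm\,\nrm{\omg}_1\,\nu^{-2}.
\]
Multiplying through by $\nu^{2}$ gives \eqref{eq_(general)energy and W}. There is no real obstacle here; the only thing that needs mild care is bookkeeping the exponents of $\lmb$ and $\nu$ under the scaling, and verifying that $\hat W,\hat\gmm$ are indeed the constants attached to $\hat\omg$ by Theorem \ref{thm_maximizer} (which is exactly the content of Remark \ref{rmk_prob.congruent}, derived from the uniqueness clause of Corollary \ref{cor_maximizer}).
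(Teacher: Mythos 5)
Your proposal is correct and is exactly the argument the paper intends: the corollary is deduced from Lemma \ref{lem_WM=energy} by the scaling $\hat\omg(\bfx)=\lmb^{-1}\omg(\lmb^{-1/2}\nu^{1/2}\bfx)$, using $I_M=\nu^{-2}I_{\mu,\nu,\lmb}$, $\nrm{\hat\omg}_1=\nu^{-1}\nrm{\omg}_1$, and the relations $\hat W=W\lmb^{-1/2}\nu^{-1/2}$, $\hat\gmm=\gmm\nu^{-1}$ from Remark \ref{rmk_prob.congruent}. The exponent bookkeeping checks out, so nothing is missing.
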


\begin{rmk}\label{rmk_dismiss measure 0 }
    By Theorem \ref{thm_maximizer}, for any $M>0$,  the set of maximizers $S_M$ is nonempty, and that each $\omg\in S_M$ satisfies 
    $$\omg\q=\q \bfone_{\big\{\q\calG[\omg]-Wx_2-\gmm>0\q\big\}}\qd { a.e. }\qd \mbox{ in }\q\bbR^2_+ $$ for some constants $W=W(\omg)>0,\q\gmm=\gmm(\omg)\geq0$ that are uniquely determined by $\omg$. Hereafter, if there is no confusion, we will simply assume that 
    $$\omg\q=\q \bfone_{\big\{\q\calG[\omg]-Wx_2-\gmm>0\q\big\}}\qd \mbox{ in }\q\bbR^2_+. $$ That is, we dismiss measure zero sets and assume that each maximizer $\omg$ is a characteristic function on the open bounded set $\set{\calG[\omg]-Wx_2-\gmm>0}$. 
\end{rmk}

\section{Maximizers under small impulse with unit mass bound}\label{sec:small-impulse}\q

In this section, we will prove that, if the constraint on the impulse of the variational problem is sufficiently small, then the flux constant $\gmm$ of each maximizer vanishes. Accordingly, each traveling vortex patch, generated from any maximizer, touches the horizontal line from which a continuous boundary streamline emerges. The main results of this section are given in Theorem \ref{thm_touching of dipole} and Theorem \ref{thm_boundry is continuous} below. In addition, as we obtain $\gmm=0$ due to a small impulse $M$, we have the relation 
$$I_M=\f{3}{4}WM$$ which is deduced from the identity \eqref{eq_energy and W} in Lemma \ref{lem_WM=energy}, and the relation says that $W=W(\omg)$ is completely determined by the choice of $M$. It leads to various estimates for quantities concluding mass and traveling speed, using the $M$ only. These results are given in Lemma \ref{lem_ests for small impulse} and Lemma \ref{lem_size of w}, which will be used in Section \ref{sec_ no mass bound}. 

\begin{thm}\label{thm_touching of dipole}\q  There exists an absolute constant $M_1>0$ such that,  for any $M\in(0,M_1)$ and for each maximizer $\omg=\bfone_\Omg\in S_M$ satisfying the Steiner symmetry condition in Definition \ref{def_Steiner sym}, there exists $\eps>0$ such that 
$$B_\eps(0,0)\q\subset\q \overline{\Omg\cup\Omg_-}.$$
\end{thm}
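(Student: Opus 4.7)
The plan is to combine three ingredients: (i) for sufficiently small $M$ the mass constraint in $K_M$ is not saturated, so by Theorem \ref{thm_maximizer}(iv) the flux constant $\gmm$ vanishes; (ii) once $\gmm=0$, Theorem \ref{Main thm: central speed} guarantees that the horizontal velocity at the origin exceeds $2W$; (iii) $C^{1,r}$ regularity from Lemma \ref{lem_gw:C1} propagates this strict inequality into a ball, and integration from the $x_1$-axis then places $B_\eps(0,0)\cap\bbR^2_+$ inside $\Omg$.

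\textbf{Step 1 (vanishing flux).} I invoke the small-impulse estimates scheduled in the introduction (Lemma \ref{lem_small impulse to gmm 0}, together with estimate \eqref{eq_small impulse-mass} of Lemma \ref{lem_ests for small impulse}) to show $\nrm{\omg}_1 < 1$ for every $M$ below an absolute threshold $M_1 > 0$; Theorem \ref{thm_maximizer}(iv) then gives $\gmm = 0$. The heuristic is that a hypothetical full-mass maximizer would be a unit-area patch hugging the axis (to keep the impulse small), and its self-energy, controlled pointwise by $G(\bfx,\bfy) \lesssim x_2 y_2 / |\bfx-\bfy|^2$, could not match the benchmark $I_M \gtrsim M^{4/3}$ obtained by testing against a fixed admissible profile rescaled as in Remark \ref{rmk_prop.of.G}(iv)--(v). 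Making this quantitative with absolute constants that are uniform over $S_M$ is what I see as the main technical obstacle; once it is in hand, the Poho\v{z}aev identity \eqref{eq_energy and W} collapses to $I_M = \tfrac{3}{4} W M$ and implies $W \gtrsim M^{1/3}$, which is a useful byproduct for later sections.

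\textbf{Step 2 (central speed).} With $\gmm = 0$, Theorem \ref{thm_maximizer} gives $\Omg = \set{\calG[\omg] - W x_2 > 0}$, and the Steiner hypothesis in the statement places $\Omg$ in exactly the class required by Theorem \ref{Main thm: central speed}. The function $\bfone_\Omg - \bfone_{\Omg_-}$ generates a traveling wave of \eqref{eq_Euler eq.} with speed $W$ by the traveling-wave conclusion of Theorem \ref{thm_maximizer}, so Theorem \ref{Main thm: central speed} applies and yields
$$\rd_{x_2}\tld{\calG[\omg]}(0,0) \q=\q u^1\bigl(0,(0,0)\bigr) \q>\q 2W.$$

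\textbf{Step 3 (continuity and integration).} By Lemma \ref{lem_gw:C1}, $\tld{\calG[\omg]} \in C^{1,r}(\bbR^2)$, so $\rd_{x_2}\tld{\calG[\omg]}$ is continuous across the axis and there is $\eps > 0$ with $\rd_{x_2}\tld{\calG[\omg]} > W$ on all of $B_\eps(0,0)$. Since $\tld{\calG[\omg]}$ vanishes on $\set{x_2 = 0}$ by odd symmetry, for each $\bfx = (x_1, x_2) \in B_\eps(0,0)$ with $x_2 > 0$,
$$\tld{\calG[\omg]}(\bfx) \q=\q \ii{0}{x_2} \rd_{x_2}\tld{\calG[\omg]}(x_1, s)\, ds \q>\q W x_2,$$
so $B_\eps(0,0) \cap \bbR^2_+ \subset \Omg$. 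Odd symmetry of $\omg$ gives the reflected inclusion $B_\eps(0,0) \cap \set{x_2 < 0} \subset \Omg_-$, while the axial slice $B_\eps(0,0) \cap \set{x_2 = 0}$ belongs to the closure of either set. Hence $B_\eps(0,0) \subset \overline{\Omg \cup \Omg_-}$, completing the proof.
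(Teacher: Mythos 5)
Your proposal is correct and follows essentially the same route as the paper: vanishing of the flux constant for small impulse via Lemma \ref{lem_small impulse to gmm 0}, the central-speed bound $\rd_{x_2}\tld{\calG[\omg]}(0,0)>2W$ (the paper invokes its technical version, Lemma \ref{lem_ central speed > 2W }, rather than Theorem \ref{Main thm: central speed}, but these are the same ingredient), and then continuity of $\nb\tld{\calG[\omg]}$ plus integration from the axis to place a half-ball inside $\set{\calG[\omg]-Wx_2>0}$. The only cosmetic difference is that the paper keeps the strict bound $>2W$ on the whole ball while you relax to $>W$, which works just as well.
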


  Theorem \ref{thm_boundry is continuous} below says that the boundary streamline of the touching vortex patch continuously runs to its end-point on $x_1$-axis. 

\begin{thm}\label{thm_boundry is continuous}\q    Let $M_1>0$ be the constant in Theorem \ref{thm_touching of dipole}. For any $M\in(0,M_1)$ and for each $\omg\in S_M$ satisfying the Steiner symmetry condition in Definition \ref{def_Steiner sym}, with the constant $W=W(\omg)>0$ in Theorem \ref{thm_maximizer}, the following holds: \\

    \noindent(i) Define $A:=\set{\q y_2>0\q:\q \omg(0,y_2)=1\q}$. Then the set $A$ is a nonempty, bounded, and open subset of $(0,\ift)$, and there exists $\eps>0$ such that $(0,\eps)\subset A$. In addition, we can define a function $l: (0,\ift)\to [0,\ift)$ such that $$\Big\{\q y_1\in\bbR\q:\q \omg(y_1,y_2)=1\q\Big\}\q=\q \Big(-l(y_2),\q l(y_2)\Big)\qd \mbox{for }\q y_2\in A,\qd \mbox{ and }\qd l(y_2)=0\qd \mbox{ for }\q y_2\in (0,\ift)\backslash A.$$ 
    
    \noindent (ii) The function $l$ is continuous in $(0,\ift)$ and lies in $C^{1,r}$ locally in $A$, for each $r\in(0,1)$. Furthermore, for each $y_2\in A$, we have $$\calG[\omg]\big(\q l(y_2),\q y_2\big)=Wy_2.$$

    \noindent(iii) There exists a unique constant $a\in(0,\ift)$ such that $$\tld{\calG[\omg]}_{x_2}(a,0)=W.$$ Moreover the limit  $\lim_{y_2\to 0^+}l(y_2)$ exists and is equal to the number $a\in(0,\ift)$. 
\end{thm}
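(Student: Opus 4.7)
The plan is to prove the three parts of Theorem \ref{thm_boundry is continuous} in order, using throughout the representation $\omg=\bfone_{\set{\Psi>0}}$ with $\Psi(\bfx):=\calG[\omg](\bfx)-Wx_2$ (note that $\gmm=0$ by Theorem \ref{thm_touching of dipole}, applicable for $M<M_1$).

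\emph{Part (i).} First I would use Theorem \ref{thm_touching of dipole} to produce a small $\eps_0>0$ with $B_{\eps_0}(0,0)\subset\overline{\Omg\cup\Omg_-}$, and combine this with the openness of $\Omg=\set{\Psi>0}$ (from continuity of $\calG[\omg]$, via Lemma \ref{lem_gw:C1}) to conclude $(0,\eps)\subset A$ for some $\eps>0$. Boundedness of $A$ comes from Lemma \ref{lem_w:cpt_supp}; openness of $A$ from openness of $\Omg$. Steiner symmetry forces each horizontal cross-section of $\Omg$ to be a symmetric open interval about $x_1=0$, which defines $l(y_2)$ as its half-length; extend by $l\equiv 0$ on $(0,\infty)\setminus A$.

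\emph{Part (ii).} For $y_2\in A$, continuity of $\calG[\omg]$ together with the fact that $\omg$ vanishes on $\set{\Psi=0}$ (since $-\Delta\Psi=\omg$ and $\nb\Psi=0$ a.e.~on $\set{\Psi=0}$) gives $\calG[\omg](l(y_2),y_2)=Wy_2$. For continuity of $l$: upper semicontinuity follows from continuity of $\Psi$, since any subsequential limit $L$ of $l(y_2^n)$ with $y_2^n\to y_2^*$ satisfies $\Psi(L,y_2^*)\geq 0$, hence $L\leq l(y_2^*)$ when $y_2^*\in A$, while Steiner monotonicity forces $L=0$ when $y_2^*\notin A$; lower semicontinuity on $A$ follows because $\Psi(l(y_2^*)-\dlt,y_2)>0$ persists on a neighborhood. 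For $C^{1,r}$ regularity on $[\eps,L]\subset A$, I would apply the implicit function theorem to $\Psi(x_1,y_2)=0$ at $x_1=l(y_2)>0$. The key non-degeneracy input $\calG[\omg]_{x_1}(l(y_2),y_2)<0$ I would obtain by a Hopf-type argument for $\Psi$ on the open set $\Omg$, using $\Psi\in C^{1,r}(\bbR^2)$ and $-\Delta\Psi=1$ in $\Omg$, together with the strict sign of $\calG[\omg]_{x_1}$ at $x_1=l(y_2)$ coming from Steiner symmetry plus the positivity of $\omg=1$ on a symmetric open interval of positive length in each relevant cross-section.

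\emph{Part (iii).} Define $h(x_1):=\tld{\calG[\omg]}_{x_2}(x_1,0)$, which lies in $C^{0,r}(\bbR)$ by Lemma \ref{lem_gw:C1}. The proof of Theorem \ref{Main thm: central speed} (via Lemma \ref{lem_ central speed > 2W }) gives $h(0)>2W$. The bound from Lemma \ref{lem_w:cpt_supp} (see Remark \ref{rmk_Gw/x_2 decays}), together with $\calG[\omg](x_1,0)=0$ and the identity $\calG[\omg](x_1,x_2)/x_2\to h(x_1)$ as $x_2\to 0^+$, yields $h(x_1)\to 0$ as $|x_1|\to\infty$. The intermediate value theorem produces $a>0$ with $h(a)=W$. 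Uniqueness follows from strict monotonicity of $h$ on the $x_1$-support of $\omg$ and beyond, obtained by direct sign analysis of the Biot--Savart representation of $\tld{\calG[\omg]}_{x_2}(\cdot,0)$ against the Steiner-symmetric extension $\tld\omg$. Finally, for the limit, I would write, for $y_2\in A$,
\begin{equation*}
W=\frac{\calG[\omg](l(y_2),y_2)}{y_2}=\frac{1}{y_2}\int_0^{y_2}\calG[\omg]_{x_2}(l(y_2),s)\,ds;
\end{equation*}
boundedness of $l$ and uniform continuity of $\calG[\omg]_{x_2}$ near $\bbR\times\set{0}$ imply that any subsequential limit $L$ of $l(y_2^n)$ as $y_2^n\to 0^+$ satisfies $h(L)=W$, hence $L=a$ by uniqueness.

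\emph{Main obstacle.} The most delicate step is establishing the strict monotonicity of $\calG[\omg](\cdot,y_2)$ at the free boundary $x_1=l(y_2)$, and of $h$ at $x_1=a$, since Steiner symmetrization only supplies non-strict monotonicity of the underlying integrals. Both uses require a careful Hopf-type argument exploiting $\Psi\in H^2_{loc}\cap C^{1,r}$ and $-\Delta\Psi=1$ on $\Omg$, together with a direct kernel-sign analysis on the Biot--Savart integral for $\omg=\bfone_\Omg$ with $\Omg$ Steiner-symmetric. A secondary subtle point is handling the continuity of $l$ at the top endpoint of $A$, where the cross-section collapses to a point; this is the only place where the naive implicit-function argument breaks down and where the a priori monotonicity-based squeeze is essential.
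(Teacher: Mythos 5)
Your overall architecture coincides with the paper's: part (i) from touching plus Steiner symmetry, part (ii) from continuity of $\calG[\omg]$ at the free boundary plus the implicit function theorem, part (iii) from the central speed bound $\tld{\calG[\omg]}_{x_2}(0,0)>2W$, the decay of $\calG[\omg](\bfx)/x_2$ as $|x_1|\to\ift$ (Lemma \ref{lem_w:cpt_supp}), the intermediate value theorem, and a mean-value/averaging identification of $\lim_{s\to0^+}l(s)$ with $a$. You have also correctly isolated the crux: everything (the IFT non-degeneracy, continuity of $l$ at $\rd A$, and uniqueness of $a$) reduces to \emph{strict} monotonicity statements that non-strict Steiner monotonicity alone does not provide.

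The one genuine soft spot is your primary mechanism for the key input $\calG[\omg]_{x_1}(l(y_2),y_2)<0$, namely a ``Hopf-type argument for $\Psi$ on $\Omg$.'' This does not work as stated, for two reasons. First, Hopf's lemma requires an interior ball (or comparable regularity) condition at the boundary point, which is exactly the free-boundary regularity you are in the middle of proving; invoking it here is circular. Second, even granting it, Hopf controls the derivative in the \emph{outward normal} direction, whose orientation at $(l(y_2),y_2)$ is unknown a priori, whereas the IFT needs specifically $\Psi_{x_1}\neq0$. The paper instead proves the stronger statement $\calG[\omg]_{x_1}(\bfx)<0$ for \emph{every} $\bfx\in\bbR^2_+$ with $x_1>0$ (not just boundary points) by writing $2\pi\,\calG[\omg]_{x_1}(\bfx)=\int_A\bigl[F(l(y_2)-x_1)-F(-l(y_2)-x_1)\bigr]\,dy_2$ with $F$ even and strictly increasing in $|s|$, so that $|l(y_2)-x_1|<|-l(y_2)-x_1|$ for $l(y_2)>0$ yields a strictly negative integrand on the positive-measure set $A$; the same device gives the strict decrease of $\tld{\calG[\omg]}_{x_2}(\cdot,0)$ on $[0,\ift)$ needed for uniqueness of $a$ in part (iii). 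Since you mention ``direct kernel-sign analysis'' as an alternative in your closing paragraph, the fix is simply to promote that to the main argument and drop the Hopf route; with that substitution your proof is essentially the paper's.
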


\subsection{Vanishing of flux constant}\q

In Lemma \ref{lem_small impulse to gmm 0} we will prove $\gmm=0$ given that the impulse of the maximizer is small enough. Lemma \ref{lem_lower bound of energy} below will be used in the proof of Lemma \ref{lem_small impulse to gmm 0}. 

\begin{lem}\label{lem_lower bound of energy}
    For any $M\in(0,1)$, we have 
    $$I_M\q\geq\q M^{4/3}\cdot I_1.$$
\end{lem}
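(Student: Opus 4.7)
The plan is to exploit the scaling invariance of the Green's function $G(\bfx,\bfy)=G(r\bfx,r\bfy)$ recorded in Remark \ref{rmk_prop.of.G}(iv) to pull back a maximizer at impulse $1$ to a competitor at impulse $M$. Since $S_1$ is nonempty by Theorem \ref{thm_maximizer}(i), I would first fix some $\omg\in S_1$, so that $\omg=\bfone_A$ for a measurable $A\subset\bbR^2_+$ with $\nrm{\omg}_1\le 1$ and $\nrm{x_2\omg}_1=1$, and $E(\omg)=I_1$.

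Next, for $M\in(0,1)$, set $r:=M^{-1/3}>1$ and define $\hat{\omg}(\bfx):=\omg(r\bfx)$. Then $\hat{\omg}=\bfone_{r^{-1}A}$ is still a patch of strength $1$. A direct change of variables gives
\[
\nrm{\hat{\omg}}_1=r^{-2}\nrm{\omg}_1=M^{2/3}\nrm{\omg}_1\le M^{2/3}\le 1,\qquad \nrm{x_2\hat{\omg}}_1=r^{-3}\nrm{x_2\omg}_1=M\cdot 1=M,
\]
so $\hat{\omg}\in K_M$. Moreover, by Remark \ref{rmk_prop.of.G}(iv), $E(\hat{\omg})=r^{-4}E(\omg)=M^{4/3}\,I_1$.

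Finally, since $\hat{\omg}$ is an admissible competitor in the variational problem defining $I_M$, we conclude
\[
I_M\;\ge\;E(\hat{\omg})\;=\;M^{4/3}\,I_1,
\]
which is the claim. There is no serious obstacle here: the only thing to verify carefully is that the scaled patch still respects the mass bound $\nrm{\hat{\omg}}_1\le 1$, which is precisely where the hypothesis $M<1$ is used (since $M^{2/3}\le 1$). Note also that equality with the $4/3$ exponent reflects the dimensional scaling $E\sim r^{-4}$ against $\nrm{x_2\cdot}_1\sim r^{-3}$, which is the standard impulse-energy scaling appearing elsewhere in the paper (e.g.\ in the a priori estimate \eqref{eq_priori.est.} in the introduction).
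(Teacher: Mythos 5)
Your proof is correct and is essentially identical to the paper's own argument: both take a maximizer $f\in S_1$, rescale by $M^{-1/3}$ to obtain a competitor in $K_M$, and use the scalings $\nrm{\cdot}_1\sim M^{2/3}$, $\nrm{x_2\cdot}_1\sim M$, $E\sim M^{4/3}$. Nothing further is needed.
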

\begin{proof}\q By Theorem \ref{thm_maximizer}, the set $S_1$ of maximizers is nonempty. Choose any $f\in S_1$ so that $$\nrm{f}_1\leq1,\qd\nrm{x_2f}_1=1.$$

After the scaling $g(\bfx)=f(M^{-1/3}\bfx)$, we have 
        $$\nrm{g}_1=M^{2/3}\nrm{f}_1\leq M^{2/3}<1,
        \qd\nrm{x_2g}_1=M\nrm{x_2f}_1=M,\qd\mbox{and }\q E(g)=M^{4/3}E(f)=M^{4/3}I_1,$$ so we have $g\in K_M$ and $$I_M\geq E(g)=M^{4/3}I_1.$$
\end{proof}

\begin{lem}\label{lem_small impulse to gmm 0}
There exists a constant $M_1>0$ such that, for any $M\in(0,M_1)$ and for each $\omg\in S_M$, we have  $$\gmm=0,$$ where $\gmm=\gmm(\omg)\geq0$ is given in Theorem \ref{thm_maximizer}.
\end{lem}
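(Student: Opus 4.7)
The plan is to argue by contradiction, using the contrapositive of Theorem \ref{thm_maximizer}(iv): $\nrm{\omg}_1 < 1$ implies $\gmm = 0$. Hence it suffices to show that no $\omg \in S_M$ saturates the mass bound when $M$ is sufficiently small. Suppose for contradiction that $\omg \in S_M$ has $\gmm(\omg) > 0$, forcing $\nrm{\omg}_1 = 1$; by Theorem \ref{thm_maximizer}(iii) I may assume $\omg$ is Steiner symmetric about the $x_2$-axis.

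The first step is a pointwise upper bound on $\calG[\omg]$. Split $\omg = \omg\bfone_{y_2 \le \alpha} + \omg\bfone_{y_2 > \alpha}$; apply Lemma \ref{lem_est.of.G} to the low part (which gives $\int_{y_2 \le \alpha} G(\bfx,\bfy)\,d\bfy \le C x_2^{1/2} \alpha^{3/2}$) and Lemma \ref{lem_gw:estiate} to the high part (whose mass is at most $M/\alpha$ by Chebyshev, since $\nrm{x_2\omg}_1 = M$). Optimizing over $\alpha \sim M^{1/3}$ yields $\calG[\omg](\bfx) \le C_\ast x_2^{1/2} M^{1/2}$ uniformly in $\bfx \in \bbR^2_+$. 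Since $\omg(\bfx) = 1$ forces $\calG[\omg](\bfx) > Wx_2 + \gmm \ge \gmm$, the support lies in $\{x_2 \ge \gmm^2/(C_\ast^2 M)\}$; integrating this against the impulse with $\nrm{\omg}_1 = 1$ gives the a priori bound $\gmm \le C_\ast M$.

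The contradiction is extracted by combining the Poho\v{z}aev identity from Lemma \ref{lem_WM=energy}, $I_M = \tfrac{3}{4}WM + \tfrac{1}{2}\gmm$ (using $\nrm{\omg}_1 = 1$), with the lower bound $I_M \ge M^{4/3} I_1$ from Lemma \ref{lem_lower bound of energy}. On the support one further has $Wx_2 < \calG[\omg] \le C_\ast x_2^{1/2} M^{1/2}$, which combined with $x_2 \ge \gmm^2/(C_\ast^2 M)$ yields $W \le C_\ast^2 M/\gmm$. Together with the localized estimate \eqref{eq_est of gw/x_2} from Lemma \ref{lem_w:cpt_supp} on $|\calG[\omg]/x_2|$ and the Steiner symmetric concentration of mass near $x_1 = 0$, one controls the measure of the level set $\{\bfx \in \bbR^2_+ : \calG[\omg](\bfx) > Wx_2 + \gmm\}$---which is precisely $\nrm{\omg}_1$---strictly below $1$ for all $M$ below a universal threshold $M_1$, contradicting the assumed saturation $\nrm{\omg}_1 = 1$.

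The main obstacle is closing the final step without circular appeal to Lemma \ref{lem_ests for small impulse}, which in the sequel derives the sharper mass bound $\nrm{\omg}_1 \lesssim M^{2/3}$ using as input the very conclusion $\gmm = 0$ of the present lemma; here I must work with only the weaker a priori estimate $\gmm \le C_\ast M$, which is nonetheless sufficient because the present lemma requires only the qualitative statement $\nrm{\omg}_1 < 1$, not its precise asymptotic rate.
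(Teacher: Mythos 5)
Your opening moves are sound and track the paper's strategy: you reduce to showing $\nrm{\omg}_1<1$ via the contrapositive of Theorem \ref{thm_maximizer}(iv) (the paper invokes Lemma \ref{lem_lack of mass to gmm=0} in exactly this way), and your pointwise bound $\calG[\omg](\bfx)\le C_* x_2^{1/2}M^{1/2}$ (splitting at height $\alp\sim M^{1/3}$ and using Lemma \ref{lem_est.of.G} below, \eqref{eq_gw<theta} plus Chebyshev above) is correct, as are its consequences $\gmm\le C_*M$ and $W\le C_*^2M/\gmm$. You are also right that citing Lemma \ref{lem_ests for small impulse} here would be circular.

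The gap is the final step, which is asserted rather than carried out, and the ingredients you list cannot close it. Your pointwise bound only confines $\supp(\omg)$ to the horizontal strip $\gmm^2/(C_*^2M)\le x_2\le C_*^2M/W^2$; to bound its area one also needs the horizontal extent, and the best bound available from \eqref{eq_est of gw/x_2} together with Steiner symmetry and $\nrm{\omg}_1=1$ is $|x_1|\lesssim\max\{W^{-1},W^{-5}\}$ (as in the proof of Lemma \ref{lem_size of w}). This gives $1=\nrm{\omg}_1\lesssim MW^{-7}$, which would require $W\gtrsim M^{1/7}$; but the only lower bound extractable from Poho\v{z}aev and Lemma \ref{lem_lower bound of energy} is $W\gtrsim M^{1/3}$ (and only when $\gmm\lesssim M^{4/3}$), which is far too weak for small $M$. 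Likewise, inserting $W\le C_*^2M/\gmm$ into $\tfrac34WM+\tfrac12\gmm\ge I_1M^{4/3}$ gives a left-hand side whose minimum over $\gmm$ is of order $M\gg M^{4/3}$, so no value of $\gmm$ is excluded. The missing idea in the paper is a \emph{layer-wise} mass estimate: by Fubini and Lemma \ref{lem_est.of.G},
$$\int_{\alp<x_2\le2\alp}\calG[\omg]\,\omg\q d\bfx\q\lesssim\q\alp^{3/2}M^{1/2}\nrm{\omg}_1^{1/2},$$
an absolute bound on the layer, not one proportional to the layer's own mass as a pointwise bound would yield. Dividing by $W\alp$ (respectively by $\gmm$), summing dyadically up to a height $L$ and adding the Chebyshev tail $M/L$ gives first $\nrm{\omg}_1\lesssim(M/W)^{2/3}$, hence $W\lesssim M$ when $\nrm{\omg}_1=1$, hence $\gmm\gtrsim M^{4/3}$ by Poho\v{z}aev and Lemma \ref{lem_lower bound of energy}; rerunning the layer estimate with $\gmm$ in place of $Wx_2$ then forces $1=\nrm{\omg}_1\lesssim M^{4/15}$, the desired contradiction. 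Without a substitute for this layer decomposition your argument does not reach a contradiction.
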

\begin{proof}\q Let $M>0$. As $S_M$ is a nonempty set, we take any $\omg\in S_M$ with the constants $W=W(\omg)>0,\q \gmm=\gmm(\omg)\geq0$ given in Theorem \ref{thm_maximizer}. Then for any $L>0$, we split the integral as 
    $$\nrm{\omg}_1\q=\q \int_{0<x_2\leq L}\omg(\bfx)\q d\bfx+\int_{x_2>L}\omg(\bfx)\q d\bfx$$ and observe that 
    $$M\q=\q\nrm{x_2\omg}_1\q\geq\q \int_{x_2>L}x_2\omg(\bfx)\q d\bfx \q\geq\q L\int_{x_2> L}\omg(\bfx)\q d\bfx$$ and so 
    $$\nrm{\omg}_1\q\leq\q \int_{0<x_2\leq L}\omg(\bfx)\q d\bfx+ML^{-1}.$$ 
    On the other hand, for any $\alp>0$, we have 
\begin{equation}\label{eq_sum to mass}
    \int_{\alpha<x_2\leq2\alpha}\omg\q d\bfx\q\leq\q
\int_{\alpha<x_2\leq2\alpha}\omg\cdot\f{\calG[\omg]}{Wx_2}\q d\bfx\q\leq\q
\f{1}{W\alp}\int_{\alpha<x_2\leq2\alpha}\calG[\omg]\omg\q d\bfx.
\end{equation} 
Using the symmetry of $G$ and Lemma \ref{lem_est.of.G}, we obtain 
$$\int_{\alpha<x_2\leq2\alpha}\calG[\omg]\omg\q d\bfx=
\int_{\bbR^2_+}\Bigg[\int_{\alpha<x_2\leq2\alpha}G(\bfy,\bfx)\omg(\bfx)\q d\bfx\Bigg]\omg(\bfy)\q  d\bfy\q\lesssim\q
\alp^{3/2}\int_{\bbR^2_+}y_2^{1/2}\omg(\bfy)\q d\bfy
\q\leq\q \alp^{3/2}\nrm{x_2\omg}^{1/2}_1\nrm{\omg}_1^{1/2}\leq\alp^{3/2}M^{1/2},$$ and so we get 
$$\int_{\alpha<x_2\leq2\alpha}\omg\q d\bfx\q\lesssim\q 
\f{M^{1/2}}{W}\alp^{1/2}.$$ Therefore we have 
\begin{equation}\label{eq_pure bound of mass}
    \nrm{\omg}_1\q\leq\q \int_{0<x_2\leq L}\omg(\bfx)\q d\bfx+ML^{-1}\q=\q 
\sum_{n\geq0}\int_{L/2^{n+1}<\q  x_2\q\leq\q L/2^n}\omg(\bfx)\q d\bfx+ML^{-1}\q\lesssim\q \f{M^{1/2}}{W}L^{1/2}+ML^{-1}.
\end{equation}
As the choice of $L>0$ is arbitrary, we put $L=W^{2/3}M^{1/3}$ to obtain 
$$\nrm{\omg}_1\q\lesssim\q \Bigg(\f{M}{W}\Bigg)^{2/3}.$$

\noindent Knowing that $\nrm{\omg}_1\lesssim (M/W)^{2/3}$, we will show that the following never holds: \textit{there exists a sequence of impulse $\set{M_n}\searrow0$, and a sequence ${\omg_n}\subset S_{M_n}$, together with a sequence of positive flux constant $\gmm_n>0$. }\\

\noindent To prove it, we assume that such sequences $\set{M_n},\q \set{\omg_n}$ and $\set{\gmm_n}$ exist. We may assume that $M_n<1$ for each $n\geq1$, and we fix any $n\geq1$. By the assumption $\gmm_n>0$, we have $\nrm{\omg_n}_1=1$ by Lemma \ref{lem_lack of mass to gmm=0}. Therefore, by the relation $\nrm{\omg}_1\lesssim (M/W)^{2/3}$ and Lemma \ref{lem_WM=energy},  we get $$W_n\lesssim M_n,\qd\mbox{ and  }\qd I_{M_n}\lesssim {M_n}^2+\gmm_n.$$  Moreover, as $M_n\in(0,1)$, by Lemma \ref{lem_lower bound of energy} we have the relation  
$$M_n^{4/3}\q\lesssim\q M_n^2+\gmm_n.$$ Take $N>1$ large enough so that we have $M_n^{4/3}\lesssim \gmm_n$ for each $n\geq N$. Recall that we used the relation $\q\omg\q\leq\q \omg\cdot\calG[\omg]/Wx_2$ in the inequality \eqref{eq_sum to mass}. As $M_n^{4/3}\lesssim\gmm_n$, we can instead use the relation $\q\omg_n\q\leq\q \omg_n\cdot \calG[\omg_n]/\gmm_n$ to obtain that

$$\int_{\alpha<x_2\leq2\alpha}\omg_n\q d\bfx\q\lesssim\q 
M_n^{-5/6}\alp^{3/2},\qd\mbox{ for any }\q \alp>0.$$
Similarly with \eqref{eq_pure bound of mass}, we then obtain the following relation:
$$1\q=\q\nrm{\omg_n}_1\q\leq\q  
\sum_{k\geq0}\int_{L/2^{k+1}<\q  x_2\q\leq\q L/2^k}\omg_n(\bfx)\q d\bfx+M_nL^{-1}\q\lesssim\q M_n^{-5/6}L^{3/2}+M_nL^{-1},\qd\mbox{ for any }\q L>0.$$ Then by choosing $L=M_n^{11/15}$, we have 
$$1\q\lesssim\q M_n^{4/15},$$ which contradicts to our assumption that $\set{M_n}\searrow0$. This completes the proof.
\end{proof}\q

\subsection{Touching of pair of uniform vortices }\q

By Lemma \ref{lem_small impulse to gmm 0}, for small $M>0$ and for each maximizer $\omg\in S_M$, the corresponding flux constant $\gmm$ vanishes such that we have $$\omg=\bfone_{\set{\q\calG[\omg]-Wx_2>0\q}}$$ for the traveling speed $W>0$. 
We now assume that the set $\Omg:=\set{\q\calG[\omg]-Wx_2>0\q}\subset\bbR^2_+$ satisfies the Steiner symmetry condition, i.e. $\Omg$ is concentrated along $x_2$-axis. In order to prove that the set $\Omg$ touches $x_1$-axis at the origin, we have to prove
$$\f{\rd}{\rd x_2}\Big(\calG[\omg](x_1,x_2)-Wx_2\Big)\q>\q0\qd\mbox{ for any small }\q |\bfx|>0.$$ 
In other words, we have to estimate the horizontal velocity $u^1$ of the fluid near the center of the dipolar vortex. Lemma \ref{lem_ central speed > 2W } below gives us a strong estimate, which would guarantee that vanishing of $\gmm$ is an equivalent condition to touching of a vortex dipole patch.

\subsubsection{Central speed estimation}\q

\begin{lem}\label{lem_ central speed > 2W }
For some constants $W>0$ and $\gmm\geq0$, if a function $\omg$ is given by the relation $\omg=\bfone_\Omg\q$ for some open bounded set $\Omg\subset\bbR^2_+$ satisfying the Steiner symmetry condition in Definition \ref{def_Steiner sym} and the identity
$$\Omg=\set{\q\bfx\in\bbR^2_+\q:\q\calG[\omg](\bfx)-Wx_2-\gmm>0\q},$$ we have 
    $$ \tld{\calG[\omg]}_{x_2}(0,0)>2W $$ where $\tld{\calG[\omg]}$ is the odd extention of $\calG[\omg]$ to $\bbR^2$. 
\end{lem}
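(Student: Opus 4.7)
The plan is to identify $\tld{\calG[\omg]}_{x_2}(0,0)$ with twice the value at the origin of a harmonic function $F$ on $\bbR^2_+$ that represents the horizontal velocity induced by the reflected vorticity, and to compare it to $W$ by showing that $W$ is the mean of $F$ over $\Omg$ while $F$ attains its strict supremum at $(0,0)$. Write $\tld{\omg}=\omg_+ + \omg_-$ with $\omg_+=\bfone_\Omg$ supported in $\bbR^2_+$ and $\omg_-$ its odd reflection across $\{x_2=0\}$, and set
\[
F(\bfx) := \partial_{x_2}\psi[\omg_-](\bfx) = \int_{\bbR^2_+}\f{1}{2\pi}\f{x_2+y_2}{|\bfx-\bfy^*|^2}\,\omg_+(\bfy)\,d\bfy, \qquad \bfy^*=(y_1,-y_2).
\]
Since $\omg_-\equiv 0$ on $\bbR^2_+$, $F$ is harmonic there; continuity up to $\partial\bbR^2_+$ and decay at infinity follow from the boundedness of $\Omg$, and $F(0,0)>0$ as $\omg_+\geq 0$ is nontrivial. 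A direct computation at $x_2=0$ gives $\partial_{x_2}\psi[\omg_+](x_1,0)=F(x_1,0)$, so $\tld{\calG[\omg]}_{x_2}(x_1,0)=2F(x_1,0)$; in particular the quantity to be bounded from below equals $2F(0,0)$.

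Because $\omg$ has the form $\bfone_{\{\calG[\omg]-Wx_2-\gmm>0\}}$, the odd extension $\tld{\omg}$ is a traveling wave of speed $W$, so Remark \ref{rmk_W formula} applies. On $\bbR^2_+$ one has the decomposition $\partial_{x_2}\calG[\omg_+]=\partial_{x_2}\psi[\omg_+]+F$, and the self-interaction $\int \partial_{x_2}\psi[\omg_+]\,\omg_+\,d\bfx$ vanishes by antisymmetry of $(x_2-y_2)/|\bfx-\bfy|^2$ under $\bfx\leftrightarrow\bfy$, so the formula of Remark \ref{rmk_W formula} reduces to
\[
W = \f{1}{\nrm{\omg_+}_1}\int_{\bbR^2_+}F(\bfx)\,\omg_+(\bfx)\,d\bfx = \f{1}{|\Omg|}\int_\Omg F\,d\bfx.
\]
Hence $\tld{\calG[\omg]}_{x_2}(0,0)>2W$ is equivalent to $F(0,0)>\f{1}{|\Omg|}\int_\Omg F\,d\bfx$, and it suffices to show that $F$ attains its strict supremum on $\overline{\bbR^2_+}$ uniquely at the origin.

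On the boundary $\{x_2=0\}$,
\[
F(x_1,0) = \int_{\bbR^2_+}\f{y_2}{2\pi((x_1-y_1)^2+y_2^2)}\bfone_\Omg(\bfy)\,d\bfy,
\]
and for each $y_2$ with $l(y_2)>0$ the $y_1$-integral evaluates to $\f{1}{2\pi}\bigl[\arctan\f{l(y_2)-x_1}{y_2}+\arctan\f{l(y_2)+x_1}{y_2}\bigr]$, whose derivative in $x_1$ is strictly negative for $x_1>0$. Integrating in $y_2$ over the positive-measure set $\{l(y_2)>0\}$ shows $F(\cdot,0)$ is even in $x_1$ and strictly decreasing in $|x_1|$, so $F(x_1,0)<F(0,0)$ for $x_1\ne 0$. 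Combined with $F\to 0$ at infinity, the maximum principle pins $\sup_{\overline{\bbR^2_+}}F=F(0,0)$, and the strong maximum principle (applicable because $F$ is non-constant---it vanishes at infinity yet is positive at the origin) upgrades this to $F(\bfx)<F(0,0)$ for every $\bfx\in\bbR^2_+$. Averaging this strict inequality over $\Omg$ yields $W<F(0,0)$, hence $\tld{\calG[\omg]}_{x_2}(0,0)=2F(0,0)>2W$. The main technical point is the strict boundary inequality $F(x_1,0)<F(0,0)$ for $x_1\ne 0$: a generic symmetric-decreasing rearrangement argument would only give a non-strict bound, so one must exploit the explicit indicator structure $\omg_+=\bfone_\Omg$ through the slicewise $\arctan$ formula on the positive-measure set $\{l(y_2)>0\}$; the remaining ingredients---harmonicity and decay of $F$, the traveling-speed identity, and the diagonal cancellation---are standard or supplied by Remark \ref{rmk_W formula} and the preliminary lemmas.
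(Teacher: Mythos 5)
Your proposal is correct, and it reaches the key inequality by a genuinely different route in its second half. The setup is identical to the paper's: you split $\tld{\calG[\omg]}_{x_2}$ into the self-induced part and the image-induced part $F=u^1_{nl}$, observe $\tld{\calG[\omg]}_{x_2}(0,0)=2F(0,0)$, and use the identity of Remark \ref{rmk_W formula} (with the diagonal antisymmetry cancellation) to write $W$ as the average of $F$ over $\Omg$, so that everything reduces to showing $F$ has a strict global maximum at the origin on $\overline{\bbR^2_+}$. Where you diverge is in how that strict maximum is obtained. The paper proves it by direct monotonicity: using the Steiner slices $(-l(y_2),l(y_2))$ it shows $F(x_1,x_2)$ is even and decreasing in $x_1$ for \emph{every} fixed $x_2\ge 0$ (Claim 1), and that $F(0,x_2)$ is strictly decreasing in $x_2\ge 0$ via the $\arctan$ formula (Claim 2), then chains the two. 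You instead establish only the boundary statement --- $F(\cdot,0)$ even and strictly decreasing in $|x_1|$ via the same slicewise $\arctan$ computation --- and then exploit that $F$ is harmonic in $\bbR^2_+$ (the image vorticity vanishes there), continuous up to the boundary, and decays at infinity, so a Phragm\'en--Lindel\"of-type maximum principle plus the strong maximum principle force $F<F(0,0)$ throughout the open half-plane. Both arguments are sound; yours trades the interior monotonicity computations for a standard (but unbounded-domain) maximum principle, and in doing so shows that the Steiner symmetry is really only needed to control the boundary trace of $F$, while the paper's version is more elementary and self-contained. Your remark that a generic rearrangement argument would only give non-strict inequalities, so that the explicit indicator/slice structure must be used to get strictness, correctly identifies the same crux the paper addresses.
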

\begin{proof}\q Let $\omg\q=\q\bfone_{\set{\bfx\in\bbR^2_+\q:\q\calG[\omg](\bfx)-Wx_2-\gmm>0}}$ with some constants $W>0$ and $\gmm\geq0$. For each $\bfx\in\bbR^2$, we put
    $$\tld{\calG[\omg]}_{x_2}(\bfx)\q=\q \f{1}{2\pi}\int_{\bbR^2}\f{-x_2+y_2}{|\bfx-\bfy|^2}\q\tld{\omg}(\bfy)\q d\bfy\q=\q
\f{1}{2\pi}\int_{\bbR^2_+}\f{-x_2+y_2}{|\bfx-\bfy|^2}\q\omg(\bfy)\q d\bfy\q+\q 
\f{1}{2\pi}\int_{\bbR^2_+}\f{x_2+y_2}{|\bfx-\bfy^*|^2}\q\omg(\bfy)\q d\bfy\q:=\q u^1_l(\bfx)+u^1_{nl}(\bfx)
$$ where $\bfy^*=(y_1,-y_2)$. By the relation $\tld{\calG[\omg]}_{x_2}(0,0)=2 u^1_{nl}(0,0)$, we need to show that $$u^1_{nl}(0,0)> W.$$ 
Note that the identity \eqref{eq_W formula} in Remark \ref{rmk_W formula} implies that $$W=\nrm{\omg}_1^{-1}\cdot\int_{\bbR^2_+}u^1_{nl}\cdot\omg\q d\bfx,$$ which means that $W$ is the average of $u^1_{nl}$ in 
 the set $\supp{(\omg)}$. We will claim that, assuming the Steiner symmetry condition of $\omg$, the function $u^1_{nl}$ attains its unique maximum at $\bfx=(0,0)$ among all points in $\overline{\bbR^2_+}$, which would imply that $$u^1_{nl}(0,0)>u^1_{nl}(\bfx)\qd\mbox{ for all }\bfx\in\supp{(\omg)}\backslash\set{(0,0)}$$ and therefore $u^1_{nl}(0,0)> W.$\\
 
 \noindent\textit{(Claim 1)\q} For any $(x_1,x_2)\in\overline{\bbR^2_+}$, we have $ u^1_{nl}(x_1,x_2)=u^1_{nl}(-x_1,x_2)$. Moreover, for fixed $x_2\geq0$, the function $u^1_{nl}(x_1,x_2)$ decreases in $x_1\geq0$.\\
 
Let $(x_1,x_2)\in\overline{\bbR^2_+}$ and assume that $x_1\neq 0$. We have  
$$u^1_{nl}(x_1,x_2)\q=\q
\f{1}{2\pi}\int_{\bbR^2_+}\f{y_2+x_2}{y_1^2+(y_2+x_2)^2}\omg(y_1+x_1,y_2)\q d\bfy.$$
For any fixed $y_2>0$ satisfying that $\omg(0,y_2)=1$, observe that there exists a constant $l=l(y_2)>0$ such that 
$$\Big\{\q y_1\in\bbR\q:\q \omg(y_1,y_2)=1\q\Big\}\q=\q\Big(-l(y_2),\q l(y_2)\Big).$$ Put $A:=\big\{\q y_2>0\q:\q\omg(0,y_2)=1\q\big\}$, which is a nonempty open set, and we get 
$$u^1_{nl}(x_1,x_2)\q=\q
\f{1}{2\pi}\int_A\int_{-l(y_2)-x_1}^{l(y_2)-x_1}\f{y_2+x_2}{y_1^2+(y_2+x_2)^2}dy_1\q dy_2.$$ Observe that, for fixed $y_2\in A$, the following function 
$$f(x_1)\q:=\int_{-l(y_2)-x_1}^{l(y_2)-x_1}\f{y_2+x_2}{y_1^2+(y_2+x_2)^2}dy_1,\qd x_1\in \bbR$$ satisfies $f(x_1)=f(-x_1)$ and decreases in $x_1\geq0$. This completes the proof of \textit{(Claim 1)}.\\

\noindent\textit{(Claim 2)\q} The function $g(x_2):=u^1_{nl}(0,x_2)$ decreases in $x_2\geq0$. \\

For each $x_2\geq0$, 
$$g(x_2)\q=\q\f{1}{2\pi}\int_{\bbR^2_+}\f{y_2+x_2}{y_1^2+(y_2+x_2)^2}\omg(\bfy)\q d\bfy.$$ 
As in the proof of \textit{(Claim 1)}, we have 
$$g(x_2)\q=\q\f{1}{2\pi}\int_A\int_{-l(y_2)}^{l(y_2)}\f{y_2+x_2}{y_1^2+(y_2+x_2)^2}\q dy_1\q dy_2 
\q=\q \f{1}{\pi}\int_A \arctan{\Bigg(\f{l(y_2)}{y_2+x_2}\Bigg)}\q dy_2.$$ It implies that $g(x_2)$ decreases strictly in $x_2\geq0$. This completes the proof of \textit{(Claim 2)}.\\

By \textit{(Claim 1)} and \textit{(Claim 2)}, we have 
$$ u^1_{nl}(0,0)\q>\q
\nrm{\omg}_1^{-1}\cdot\int_{\bbR^2_+}u^1_{nl}\cdot\omg\q d\bfx
\q=\q W,$$ and so we have $\tld{\calG[\omg]}_{x_2}(0,0)>2W.\q$ This completes the proof.
\end{proof} \q

\subsubsection{Proof of Theorem \ref{thm_touching of dipole}}\q

Lemma \ref{lem_ central speed > 2W } implies that each maximizer $\omg\in S_M$ touches $x_1$-axis if and only if we get $\gmm=0$. Recall Lemma \ref{lem_small impulse to gmm 0} which says that a sufficient condition for $\gmm$ to vanish is the smallness condition of the impulse $M>0$. We will prove  Theorem \ref{thm_touching of dipole} here.

\begin{proof}[Proof of Theorem \ref{thm_touching of dipole}]\q Let $M_1>0$ be the constant in Lemma \ref{lem_small impulse to gmm 0}. For any $M\in(0,M_1)$, as the set $S_M$ is nonempty, we choose any $\omg\in S_M$. By Lemma \ref{lem_small impulse to gmm 0}, we have $\gmm=0$ and thus 
    $$\omg=\bfone_{\set{\q\calG[\omg]-Wx_2>0\q}},$$ where $W,\gmm$ are the constants in Theorem \ref{thm_maximizer}. It suffices to show that there exists $\eps>0$ such that $$B_\eps(0,0)\cap\bbR^2_+\q\subset\q \Big\{\q\calG[\omg]-Wx_2>0\q\Big\}.$$
    By Lemma \ref{lem_ central speed > 2W }, we have $$\tld{\calG[\omg]}_{x_2}(0,0)\q>\q2W.$$ By $\tld{\calG[\omg]}\in C^1(\bbR^2)$, there exists a small $\eps>0$ such that $$\q\tld{\calG[\omg]}_{x_2}>2W\qd\mbox{ in }\q B_\eps(0,0).$$ Then for each $\bfx=(x_1,x_2)\in B_\eps(0,0)\cap\bbR^2_+$, we have 
    $$\calG[\omg](\bfx)=\ii{0}{x_2}\calG[\omg]_{x_2}(x_1,s)\q ds\q>\q 2Wx_2>Wx_2,$$ and therefore $\bfx\in\{\q\calG[\omg]-Wx_2>0\q\}.$ This completes the proof.
    
\end{proof}\q

\subsubsection{Proof of Theorem \ref{thm_boundry is continuous}}\q

In the proof below, we obtain the continuity of the boundary of each vortex patch by the continuity of the stream function and the implicit function theorem for which we prove the monotonicities of the stream function in $x_1>0$ and the horizontal speed $u^1$ on the axis of symmetry $\set{x_2=0}$. 

\begin{proof}[Proof of Theorem \ref{thm_boundry is continuous}]\q Let $M_1>0$ be the constant in Lemma \ref{lem_small impulse to gmm 0}. For any $M\in(0,M_1)$, as the set $S_M$ is nonempty, we choose any $\omg=\bfone_{\set{\calG[\omg]-Wx_2>0}}\in S_M$ where $W=W(\omg)>0$ is the constant in Theorem \ref{thm_maximizer}. Note that $\gmm=\gmm(\omg)=0$ by 
    Lemma \ref{lem_small impulse to gmm 0}. We assume that $\omg$ satisfies the Steiner symmetry condition. To prove (i), we define a set $$A:=\set{\q y_2>0\q:\q\omg(0,y_2)=1\q}=
    \set{\q y_2>0\q:\q\calG[\omg](0,y_2)-Wy_2>0\q}$$ which is clearly a nonempty set.  We observe that, by Theorem \ref{thm_touching of dipole}, there exists $\eps>0$ satisfying $(0,\eps)\subset A$. Moreover, the set $\set{\calG[\omg]-Wx_2>0}$ is open bounded set in $\bbR^2_+$ by Theorem \ref{thm_maximizer}, the set $A$ is open bounded subset of $(0,\ift)$. By the Steiner symmetry condition of $\omg$, we can define a function $l:(0,\ift)\to[0,\ift)$ satisfying that, for each $y_2\in A,$
    $$l(y_2)>0\qd\mbox{ and }\qd \set{\q y_1\in\bbR\q:\q\omg(y_1,y_2)=1\q}=(-l(y_2),\q l(y_2)),$$
    and $l\equiv0$ in $(0,\ift)\backslash A$. This proves (i). For the proof of (ii), we first make the following claims. \\
    
 \noindent\textit{(Claim 1)}\q For each $y_2\in \overline{A}$, we have $\q\calG[\omg]\big(\q l(y_2),\q y_2\big)=Wy_2$. \\

    Let $y_2\in A$. For any $y_1\in \big(0,\q l(y_2)\big)$, we have $\omg(y_1,y_2)=1$, and so $\calG[\omg](y_1,y_2)>Wy_2.\q$ If $y_1\in \big(\q l(y_2),\q \ift\big)$, then $\omg(y_1,y_2)=0$, and so we have 
    $\calG[\omg](y_1,y_2)\leq Wy_2.\q$ By the continuity of $\calG[\omg]$, we have 
    $$\calG[\omg]\big(\q l(y_2),\q y_2\big)=Wy_2.$$ For each $y_2'\in \rd{A}$, we have $l(y_2')=0$ by definition of $l$, and a similar argument can be done to obtain that 
    $$\calG[\omg](0,y_2')-Wy_2'=0.$$
    This proves \textit{(Claim 1)}.\\

    \noindent\textit{(Claim 2)} \q For each $\bfx=(x_1,x_2)\in \bbR^2_+$ satisfying $x_1>0$, we have $\q\calG[\omg]_{x_1}(\bfx)<0$. \\
    
    For each $\bfx=(x_1,x_2)\in \bbR^2_+$ satisfying $x_1>0$, observe that 
    \begin{eqnarray*}
        2\pi\cdot\calG[\omg]_{x_1}(\bfx)&=&\int_{\bbR^2_+}\omg(\bfy)(y_1-x_1)\Bigg[\q\f{1}{|\bfy-\bfx|^2}-\f{1}{|\bfy^*-\bfx|^2}\q\Bigg]d \bfy\\
        &=&\int_A\ii{-l(y_2)}{l(y_2)}\q\f{4x_2y_2\q(y_1-x_1)}{|\bfy-\bfx|^2|\bfy^*-\bfx|^2}\q dy_1\q dy_2\\
        &=& \int_A\ii{-l(y_2)-x_1}{l(y_2)-x_1}\f{y_1\cdot(4x_2y_2)}{(y_1^2+(y_2-x_2)^2)\cdot(y_1^2+(y_2+x_2)^2)}\q dy_1\q dy_2\\
        &=& \int_A \Big[F\big(l(y_2)-x_1\big)-F\big(-l(y_2)-x_1\big) \Big]\q dy_2
    \end{eqnarray*} 
   where, for each fixed $y_2\in A\backslash\set{x_2}$, the function $F:\bbR\to\bbR$ is defined as  
    $$F(s):=\ii{0}{s}\f{y_1\cdot(4x_2y_2)}{(y_1^2+(y_2-x_2)^2)\cdot(y_1^2+(y_2+x_2)^2)}\q dy_1.$$ Note that $F$ is an even function in $s\in\bbR$ by the odd symmetry of the integrand in $y_1\in\bbR$. Moreover, we observe that $F(s)$ increases strictly in $|s|\geq0$, since the integrand is positive in $y_1>0$. For each $y_2\in A\backslash\set{x_2}$, we have $l(y_2)>0$ and so we get 
    $$\big|l(y_2)-x_1\big|<\big|-l(y_2)-x_1\big|,\qd\mbox{ implying that }\qd 
    F\big(l(y_2)-x_1\big)<F\big(-l(y_2)-x_1\big).$$ 
    This gives $\calG[\omg]_{x_1}(\bfx)<0$ and completes the proof of \textit{(Claim 2)}.\\

      We will now prove (ii). First we want to prove that $l\in C^{1,r}_{loc}(A)$ for each $r\in(0,1).$ We define a $C^1$ function $\Psi:\bbR^2_+\to\bbR$ as $$\Psi(\bfx):=\calG[\omg](\bfx)-Wx_2.$$
    For each $y_2\in A$, by \textit{(Claim 1)} and \textit{(Claim 2)},  we have $\Psi\big( l(y_2),\q y_2\big)=0$ and $\Psi_{x_1}\big( l(y_2),\q y_2\big)<0$. Then the implicit function theorem can be applied to $\Psi$ at each point $\big( l(y_2),\q y_2\big)$. That is, for each $y_2\in A$, there exists a small neighborhood $U\subset A$ containing $y_2$ such that we have $$\Psi(f(s),\q s)=0, \qd s\in U$$ for some function $f:U\to \bbR$ which is continuously differentiable in $U$ and satisfies $f(y_2)=l(y_2)$. In \textit{(Claim 2)}, we proved that $\Psi(x_1,x_2)$ is strictly decreasing in $x_1>0$, and it follows that $f(s)=l(s)$ for each $s\in U$. In conclusion, $l$ is continuously differentiable in $A$. To show that $l'\in C^{r}_{loc}(A)$ for each $r\in(0,1)$, we fix any  $r\in(0,1)$. It suffices to show that, for any $s\in A$ and $\eps>0$ satisfying that $[s-\eps,s+\eps]\subset A$, we have 
    $$
    \sup_{0<|s-t|\leq\eps}\f{|\q l'(s)-l'(t)\q|}{|s-t|^r}<\ift.
    $$ By the formula $\Psi(l(s),s)=0$, we have 
    $$l'(s)=-\f{\Psi_{x_2}(l(s),s)}{\Psi_{x_1}(l(s),s)}.$$ Fix $s\in A$ and $\eps>0$ to satisfy $[s-\eps,s+\eps]\subset A$. Then for any $t\in [s-\eps,s+\eps]$, we have 
    $$|\q l'(s)-l'(t)\q|\q\lesssim\q\f{\big|(\nb \Psi)(l(t),t)\q\big|}{\big|\q \Psi_{x_1}(l(s),s)\cdot \Psi_{x_1}(l(t),t)\q \big|}\cdot\big|\q(\nb \Psi)(l(s),s)-(\nb \Psi)(l(t),t)\q\big|.$$ We deal with the denominator first. As the continuous function $g(y):=\Psi_{x_1}(l(y),y)$ satisfies $g<0$ in the domain $A$, there exists a constant $C_0>0$ such that 
    $g<-C_0$ in $[s-\eps,s+\eps]$. Therefore we have 
    $$|\q l'(s)-l'(t)\q|\q\lesssim\big|(\nb \Psi)(l(t),t)\q\big|\cdot\big|\q(\nb \Psi)(l(s),s)-(\nb \Psi)(l(t),t)\q\big|.$$ By Lemma \ref{lem_gw:C1}, the relation $\nb \Psi=\nb\calG[\omg]-(0,W)$ leads to $\nrm{\nb \Psi}_{L^\ift}\lesssim \nrm{\nb\calG[\omg]}_{L^\ift}+W<\ift$, so we finally get 
    $$|\q l'(s)-l'(t)\q|\q\lesssim\big|\q(\nb \Psi)(l(s),s)-(\nb \Psi)(l(t),t)\q\big|.$$
    As $\nb \Psi\in C^r(\bbR^2_+)$ by Lemma \ref{lem_gw:C1}, we have 
    $$|\q l'(s)-l'(t)\q|\q\lesssim_r\q|\q(l(s)-l(t))^2+(s-t)^2\q|^{r/2}\q\leq\q
    |s-t|^r\cdot\big|\q\nrm{l'}_{L^\ift([s-\eps,s+\eps])}^2+1\q\big|^{r/2}$$ where $\nrm{l'}_{L^\ift([s-\eps,s+\eps])}<\ift$ due to $l'\in C(A).$ Therefore we have $l'\in C^r_{loc}(A)$. \\
    
    \noindent It remains to show that $l\in C((0,\ift))$. Note that, at each point in $A$ and $(0,\ift)\backslash\overline{A}$, the continuity of $l$ is trivially obtained using the fact that $A$ is an open set. For each $y_2\in (0,\ift)\cap \rd A$, we observe that $$\calG[\omg](0,y_2)=Wy_2\qd\mbox{ and }\qd l(y_2)=0$$ by \textit{(Claim 1)}. We need to show that $\lim_{s\to y_2}l(s)=0$. As $\omg$ is boundedly supported in $\bbR^2_+$, we have 
    $$\limsup_{s\to y_2}l(s)<\ift.$$ If $l(s)$ does not converges to 0 as $s\to y_2$, there exists a sequence $\set{s_n}\to y_2$ and a number $L\in (0,\ift)$ such that $\lim_{n\to\ift}l(s_n)=L.$ By the continuity of $\calG[\omg]$, we have $$\calG[\omg](L,y_2)-Wy_2=0,$$ which means that $\calG[\omg](0,y_2)=\calG[\omg](L,y_2)$. It contradicts to \textit{(Claim 2)}. In sum, we have $l\in C((0,\ift))$.  \\

    Finally, we now prove (iii), starting from the following claim. \\
    
    \noindent\textit{(Claim 3)} $\qd\tld{\calG[\omg]}_{x_2}(s,0)$ strictly decreases in $s\geq0$. \\

    For $s\geq0$, we have 
    $$\tld{\calG[\omg]}_{x_2}(s,0)=\f{1}{\pi}\int_{\bbR^2_+}\f{y_2}{(y_1-s)^2+y_2^2}\omg(\bfy)\q d\bfy=\f{1}{\pi}\int_A\ii{-l(y_2)-s}{l(y_2)-s}\f{y_2}{y_1^2+y_2^2}dy_1dy_2.$$ 
    For fixed $y_2\in A$, observe that the integrand 
    $$\ii{-l(y_2)-s}{l(y_2)-s}\f{y_2}{y_1^2+y_2^2}dy_1$$ strictly decreases in $s\geq0$. This completes the proof of \textit{(Claim 3)}. \\
    
    Note that we have $$\tld{\calG[\omg]}_{x_2}(0,0)>2W$$ by Lemma \ref{lem_ central speed > 2W }. By \eqref{eq_est of gw/x_2} in Lemma \ref{lem_w:cpt_supp}, there exists $K\in(0,\ift)$ such that we have $$x_1>K\q\imp\q\tld{\calG[\omg]}_{x_2}(x_1,0)=
    \lim_{x_2\searrow0}\f{\calG[\omg](x_1,x_2)}{x_2}\q\leq\q W.$$ As $\tld{\calG[\omg]}_{x_2}(s,0)$ decreases strictly in $s\geq0$, there exists a unique constant $a\in(0,\ift)$ satisfying 
    $$\tld{\calG[\omg]}_{x_2}(a,0)=W.$$
    It remains to show that $\lim_{s\to 0^+}l(s)=a$. As $\omg$ is boundedly supported, we have  
    $$\limsup_{s\searrow0}{l(s)}<\ift.$$
   Suppose that $l(s)$ does not converge to $a$ as $s\searrow0$. Then there exists a sequence $s_n'\searrow 0$ and a number $a'\in(0,\ift)$ satisfying that 
   $$\lim_{n\to\ift} l(s_n')\q=\q a'\q\neq\q a.$$
   In the relation $$W=\f{\calG[\omg]\big(\q l(s_n'),\q s_n'\big)}{s_n'},$$ due to $\calG[\omg]\big(\q l(s_n'),\q 0\big)=0,$ we can use the mean value theorem for each $n\geq1$ to find a sequence $0<s_n''<s_n'$ such that 
   $$W=\tld{\calG[\omg]}_{x_2}\big(\q l(s_n'),\q s_n''\big).$$ Letting $n\to\ift$ gives $W=\calG[\omg]_{x_2}(a',0)$, which contradicts to $a'\neq a$. This completes the proof of (iii).
\end{proof}\q

\subsection{Estimates when flux constant vanishes due to small impulse}\q

Under the small impulse condition, knowing that $\gmm=0$ by Lemma \ref{lem_small impulse to gmm 0}, we can obtain several estimates depending only on $M$. \\

\begin{lem}\label{lem_ests for small impulse}
    There exists an absolute constant $C\geq1$ such that the following hold: \\ 
    
    Let $M_1>0$ be the constant in Lemma \ref{lem_small impulse to gmm 0}. For any $M\in(0,M_1)$ and for each $\omg\in S_M$, we have 

\begin{equation}\label{eq_small impulse-mass}
        \f{1}{C}\cdot M^{2/3}\q\leq\q\nrm{\omg}_1\q\leq\q C\cdot M^{2/3},
        \end{equation} 
\begin{equation}\label{eq_small impulse-energy}
    \f{1}{C}\cdot M^{4/3}\q\leq\q I_M\q\leq\q C\cdot M^{4/3},
\end{equation} 

\begin{equation}\label{eq_small impulse-W}
    \f{1}{C}\cdot M^{1/3}\q\leq\q W \q\leq\q C\cdot M^{1/3},
\end{equation} 
\begin{equation}\label{eq_small impulse-gw<mu}
        \nrm{\calG[\omg]}_{L^\ift(\bbR^2_+)}\q\leq\q C\cdot M^{2/3},
\end{equation} 
where $W=W(\omg)>0$ is given in Theorem \ref{thm_maximizer}. 

\end{lem}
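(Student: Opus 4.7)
The plan is to exploit the consequence $\gmm=0$ of Lemma \ref{lem_small impulse to gmm 0} to reduce the Poho\v{z}aev-type identity \eqref{eq_energy and W} to the clean relation $I_M=\tfrac{3}{4}WM$, and then bootstrap between energy, speed and mass using the pointwise estimates on $\calG[\omg]$ collected in Section \ref{sec:prelim}. Write $m:=\nrm{\omg}_1$.

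First I would combine $I_M=\tfrac{3}{4}WM$ with the energy lower bound $I_M\geq I_1 M^{4/3}$ from Lemma \ref{lem_lower bound of energy} to obtain $W\geq \tfrac{4I_1}{3}M^{1/3}$. This already yields the lower bound of \eqref{eq_small impulse-W} and the lower bound of \eqref{eq_small impulse-energy}.

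Next, for the upper bound on $m$, I would sharpen the dyadic argument from the proof of Lemma \ref{lem_small impulse to gmm 0}. Since $\omg\leq \omg\cdot \calG[\omg]/(Wx_2)$ pointwise on $\Omg$, for each $\alpha>0$ we have $\int_{\alpha<x_2\leq 2\alpha}\omg\leq (W\alpha)^{-1}\int_{\alpha<x_2\leq 2\alpha}\calG[\omg]\,\omg$. Applying Fubini with Lemma \ref{lem_est.of.G} and then Cauchy--Schwarz in the form $\int y_2^{1/2}\omg\leq m^{1/2}M^{1/2}$ --- crucially retaining the factor $m^{1/2}$ rather than discarding it via $m\leq 1$ as was done in Lemma \ref{lem_small impulse to gmm 0}, where only the qualitative statement $\gmm=0$ was sought --- we obtain $\int_{\alpha<x_2\leq 2\alpha}\omg\lesssim \alpha^{1/2}m^{1/2}M^{1/2}/W$. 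Summing dyadically over $0<x_2\leq L$ and combining with the trivial $\int_{x_2>L}\omg\leq M/L$ gives, for every $L>0$, the quadratic-in-$m^{1/2}$ inequality $m\lesssim L^{1/2}m^{1/2}M^{1/2}/W + M/L$. Optimizing (e.g.\ $L=W$) and solving yields $m\leq CM/W$, and combined with $W\gtrsim M^{1/3}$ this delivers the upper bound $m\lesssim M^{2/3}$ of \eqref{eq_small impulse-mass}.

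For the matching lower bound on $m$, I would use \eqref{eq_energy:finite}, which reads $I_M\leq C_2\, m\,\nrm{\omg}_2^{1/2}M^{1/2}$; since $\omg=\bfone_\Omg$ satisfies $\nrm{\omg}_2^2=m$, this becomes $I_M\lesssim m^{5/4}M^{1/2}$. Combining with $I_M\geq I_1 M^{4/3}$ forces $m\gtrsim M^{2/3}$. The remaining bounds then fall out: \eqref{eq_small impulse-gw<mu} follows by inserting $m\lesssim M^{2/3}$ into the pointwise estimate \eqref{eq_gw:bdd}; the upper bound of \eqref{eq_small impulse-energy} follows from $I_M=\tfrac12\int\calG[\omg]\omg\leq \tfrac12\nrm{\calG[\omg]}_{L^\infty}\cdot m\lesssim M^{4/3}$; and dividing by $M$ then gives the upper bound in \eqref{eq_small impulse-W}. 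The main obstacle is the sharp upper bound $m\lesssim M^{2/3}$: naively combining $m\lesssim (M/W)^{2/3}$ from the argument in Lemma \ref{lem_small impulse to gmm 0} with $W\gtrsim M^{1/3}$ only yields $m\lesssim M^{4/9}$, which is strictly weaker for small $M$. Carrying the $m^{1/2}$ factor through the Cauchy--Schwarz step turns the dyadic bound quadratic in $m^{1/2}$, and its solution $m\leq CM/W$ is exactly what matches the scaling forced by Lemma \ref{lem_lower bound of energy}.
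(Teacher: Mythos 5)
Your proof is correct, and its skeleton coincides with the paper's: reduce the Poho\v{z}aev identity to $I_M=\tfrac34 WM$ using $\gmm=0$, get $W\gtrsim M^{1/3}$ from the scaling lower bound of Lemma \ref{lem_lower bound of energy}, run the dyadic decomposition in $x_2$ based on $\omg\le\omg\cdot\calG[\omg]/(Wx_2)$ together with Lemma \ref{lem_est.of.G}, and close everything with \eqref{eq_gw:bdd} and \eqref{eq_energy:finite}. Two sub-steps differ from the paper, and in both cases your version is arguably cleaner. For the upper bound on $m=\nrm{\omg}_1$, the paper truncates at the support height $K\le\nrm{\calG[\omg]}_{L^\ift}/W$, estimates $K$ via \eqref{eq_gw:bdd}, and ends up solving the cubic inequality $t^3\lesssim t+1$ with $t=M^{-1/6}m^{1/4}$; you instead keep an arbitrary cutoff $L$ with the tail term $M/L$, choose $L=W$, and solve a quadratic in $m^{1/2}$ to get the intermediate bound $m\lesssim M/W$, which is more transparent and matches the a priori scaling $\nrm{\omg}_1\sim\mu^{2/3}$ discussed in the introduction. (Your observation that one must retain the $m^{1/2}$ factor from Cauchy--Schwarz, rather than discarding it as in the proof of Lemma \ref{lem_small impulse to gmm 0}, is exactly the point the paper also exploits.) For the lower bound $m\gtrsim M^{2/3}$, the paper first derives $\sup_{\supp\omg}x_2\lesssim M^{1/3}$ and uses $M=\int x_2\omg\lesssim M^{1/3}m$, whereas you combine $M^{4/3}\lesssim I_M$ directly with $I_M\lesssim m^{5/4}M^{1/2}$ from \eqref{eq_energy:finite}; both are valid, and yours avoids the height estimate altogether. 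All exponents check out, so the argument goes through as written.
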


\begin{proof}\q   Let $M_1>0$ be the constant in Lemma \ref{lem_small impulse to gmm 0} and choose $M\in(0,M_1)$. As $S_M$ is nonmepty, we choose any $\omg\in S_M$ with the constants $W=W(\omg)>0$ and $\gmm=\gmm(\omg)\geq0$ given in Theorem \ref{thm_maximizer}. Note that $\gmm=0$ by Lemma \ref{lem_small impulse to gmm 0}.  We will prove each inequality separately.\\ 

        \noindent\textit{(Claim 1)}  $\qd M^{4/3}\lesssim I_M\q$ and $\q M^{1/3}\lesssim W$.\\

        Lemma \ref{lem_lower bound of energy} says that $M^{4/3}\lesssim I_M$ given that $M<1$.  Using the identity $I_M= (3/4)WM$ in Lemma \ref{lem_WM=energy}, we obtain $M^{1/3}\lesssim W$. This completes the proof of \textit{(Claim 1)}.\\

        \noindent\textit{(Claim 2)}  $\qd\nrm{\omg}_1\lesssim M^{2/3}$.\\

       Before we estimate $\nrm{\omg}_1$, recall that $\calG[\omg]\in L^\ift(\bbR^2_+)$ by \eqref{eq_gw:bdd} in Lemma \ref{lem_gw:estiate} and that we have 
\begin{equation}\label{eq_supp w is bdd from above }
    \Big\{\q\calG[\omg]-Wx_2>0\q\Big\}\subset \Big\{\q x_2<\nrm{\calG[\omg]}_{L^\ift(\bbR^2_+)}/W\q\Big\}\qd\mbox{ which implies }\qd 
        \sup_{\bfx\in\supp{(\omg)}}{x_2}  \q\leq\q
        \f{\nrm{\calG[\omg]}_{L^\ift(\bbR^2_+)}}{W}.
\end{equation}
    
        \noindent With the fact that $\nrm{\omg}_2=\nrm{\omg}_1^{1/2}$, the estimate \eqref{eq_gw:bdd} in Lemma \ref{lem_gw:estiate} gives 
$$\nrm{\calG[\omg]}_{L^\ift(\bbR^2_+)}\q\lesssim\q \nrm{\omg}_1^{1/2}M^{1/3}+\nrm{\omg}_1^{1/4}M^{1/2}.$$ 
Using the relation $M^{1/3}\lesssim W$ in \textit{(Claim 1)}, we have 
 $$\sup_{\bfx\in\supp{(\omg)}}{x_2}\q\lesssim\q \nrm{\omg}_1^{1/2}+\nrm{\omg}_1^{1/4}M^{1/6}.$$ 
Put $K:=\sup_{\bfx\in\supp{(\omg)}}{x_2}$ and observe that 
 $$\nrm{\omg}_1\q=\q\int_{x_2\q\leq\q K}\omg\q d\bfx \qd\mbox{ where }\qd K\q\lesssim\q\nrm{\omg}_1^{1/2}+\nrm{\omg}_1^{1/4}M^{1/6}.$$ 
We will now use Lemma \ref{lem_est.of.G} to estimate $\nrm{\omg}_1$ in terms of $K$. Observe that 
 $$\nrm{\omg}_1\q=\q\sum_{n\geq0}\int_{K/2^{n+1}<\q x_2\q \leq\q K/2^n}\omg(\bfx)\q d\bfx.$$

\noindent For any $\alp>0$, we have 
$$\int_{\alpha<x_2\leq2\alpha}\omg\q d\bfx\q\leq\q
\int_{\alpha<x_2\leq2\alpha}\omg\cdot\f{\calG[\omg]}{Wx_2}\q d\bfx\q\leq\q
\f{1}{W\alp}\int_{\alpha<x_2\leq2\alpha}\calG[\omg]\omg\q d\bfx\q\lesssim\q
\f{1}{M^{1/3}\alp}\int_{\alpha<x_2\leq2\alpha}\calG[\omg]\omg\q d\bfx.$$
 Using the symmetry of $G$ and Lemma \ref{lem_est.of.G}, we obtain 
$$\int_{\alpha<x_2\leq2\alpha}\calG[\omg]\omg\q d\bfx=
\int_{\bbR^2_+}\Bigg[\int_{\alpha<x_2\leq2\alpha}G(\bfy,\bfx)\omg(\bfx)\q d\bfx\Bigg]\omg(\bfy)\q  d\bfy\q\lesssim\q
\alp^{3/2}\int_{\bbR^2_+}y_2^{1/2}\omg(\bfy)\q d\bfy
\q\leq\q \alp^{3/2}M^{1/2}\nrm{\omg}_1^{1/2}.$$
Therefore we have 
        $$\int_{\alpha<x_2\leq2\alpha}\omg\q d\bfx\q\lesssim\q
        \alp^{1/2}M^{1/6}\nrm{\omg}_1^{1/2}.$$
        \noindent Then 
        $$\nrm{\omg}_1\q=\q\sum_{n\geq0}\int_{K/2^{n+1}<\q x_2\q \leq\q K/2^n}\omg(\bfx)\q d\bfx\q\q\lesssim\q\q M^{1/6}\nrm{\omg}_1^{1/2}\sum_{n\geq0}\Big(\f{K}{2^{n+1}}\Big)^{1/2}\q\lesssim\q K^{1/2}M^{1/6}\nrm{\omg}_1^{1/2}.$$
        In sum, we obtain 
        $$\nrm{\omg}_1\q\lesssim\q M^{1/6} \nrm{\omg}_1^{1/2}
        \Bigl(\nrm{\omg}_1^{1/2}+\nrm{\omg}_1^{1/4}M^{1/6}\Bigl)^{1/2}$$ 
        and therefore 
        $$\nrm{\omg}_1^{3/4}\q\lesssim\q M^{1/3}\nrm{\omg}_1^{1/4}+M^{1/2}.$$ Under the substitution $t:=M^{-1/6}\nrm{\omg}_1^{1/4}$, we obtain $$t^3\lesssim t+1$$ and thus $t\lesssim 1$. In sum, we get 
        $$M^{-1/6}\nrm{\omg}_1^{1/4}\q\lesssim\q 1\qd\mbox{ and so } \qd 
        \nrm{\omg}_1\q\lesssim\q M^{2/3}.$$\q

\noindent\textit{(Claim 3)} $\q I_M\lesssim M^{4/3},\q$ $\q\nrm{\calG[\omg]}_{L^\ift(\bbR^2_+)}\lesssim M^{2/3},\q$ and $\q W\lesssim M^{1/3}$.\\

Using the relation $\nrm{\omg}_1\lesssim M^{2/3}$ and the fact that $\nrm{\omg}_2=\nrm{\omg}_1^{1/2}$, the estimate \eqref{eq_energy:finite} in Lemma \ref{lem_gw:estiate} gives 
$$I_M\q\lesssim\q M^{1/2}\nrm{\omg}_1^{5/4}\q\lesssim\q M^{4/3},$$ and the estimate \eqref{eq_gw:bdd} in Lemma \ref{lem_gw:estiate} gives 
$$\nrm{\calG[\omg]}_{L^\ift(\bbR^2_+)}\q\lesssim\q 
\nrm{\omg}_1^{1/2}M^{1/3}+\nrm{\omg}_1^{1/4}M^{1/2}\lesssim M^{2/3}.$$ \\
And lastly, using  Lemma \ref{lem_WM=energy}, we obtain 
$$W\q=\q \f{4I_M}{3M}\q\lesssim\q M^{1/3}.$$ It completes the proof of \eqref{eq_small impulse-energy}, \eqref{eq_small impulse-W}, and \eqref{eq_small impulse-gw<mu}.\\

\noindent\textit{(Claim 4)} $M^{2/3}\lesssim\nrm{\omg}_1$.\\

Using the relation $\nrm{\calG[\omg]}_{L^\ift(\bbR^2_+)}\lesssim M^{2/3}$ and $M^{1/3}\lesssim W$, the inequality \eqref{eq_supp w is bdd from above } implies that 

\begin{equation}\label{eq_small impulse : height of supp w}
    \sup_{\bfx\in\supp{(\omg)}}{x_2}\q\leq\q \f{\nrm{\calG[\omg]}_{L^\ift(\bbR^2_+)}}{W}\lesssim M^{1/3}.
\end{equation}
Therefore, we can observe that 
$$M\q=\q\int_{x_2\q\lesssim\q M^{1/3}}x_2\omg(\bfx)\q d\bfx 
\q\lesssim\q M^{1/3}\nrm{\omg}_1,$$ so we get $M^{2/3}\lesssim\nrm{\omg}_1$.
\end{proof}\q

Combining some estimates in Lemma \ref{lem_ests for small impulse}, we can roughly estimate the size of the compact support of maximizers in $S_M$.  

\begin{lem}\label{lem_size of w}
    There exists a constant $C>0$ satisfying the following:\\

    Let $M_1>0$ be the constant in Lemma \ref{lem_small impulse to gmm 0}. For any $M\in(0,M_1)$ and for each $\omg\in S_M$ satisfying the Steiner symmetry condition in Definition \ref{def_Steiner sym}, we have 
    \begin{equation}\label{eq_size of supp w}
        \sup_{\bfx\in\supp{(\omg)}}{|\bfx|}\q\leq\q C\cdot M^{1/3},
    \end{equation}
\end{lem}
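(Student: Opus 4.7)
The height bound $\sup_{\bfx\in\supp(\omg)} x_2 \lesssim M^{1/3}$ is already recorded in \eqref{eq_small impulse : height of supp w}, so the plan is to establish the matching horizontal bound $\sup_{\bfx\in\supp(\omg)}|x_1|\lesssim M^{1/3}$. The Steiner symmetry of $\omg$ reduces the task to showing $x_1\lesssim M^{1/3}$ for $\bfx=(x_1,x_2)\in\supp(\omg)$ with $x_1\ge 0$.

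First, I would feed the small-impulse estimates $\nrm{\omg}_1\lesssim M^{2/3}$ and $W\gtrsim M^{1/3}$ from Lemma \ref{lem_ests for small impulse} into the pointwise estimate \eqref{eq_est of gw/x_2} of Lemma \ref{lem_w:cpt_supp}. Choosing $\eps:=A\, M^{1/3}$ with $A$ a sufficiently large absolute constant (depending only on $C_0$ and the constants from \eqref{eq_small impulse-mass}, \eqref{eq_small impulse-W}) makes the first term $C_0\eps^{-1}\nrm{\omg}_1$ at most $W/2$. Since $\calG[\omg](\bfx)/x_2\ge W$ on $\supp(\omg)$, the remaining term must absorb at least $W/2$, so cubing gives
\[
    m_\eps(x_1):=\nrm{\omg}_{L^1([x_1-\eps,x_1+\eps]\times(0,\infty))}\;\ge\;c_1\, M^{2/3}
\]
for some absolute constant $c_1>0$ and every $(x_1,x_2)\in\supp(\omg)$.

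Second, I would convert Steiner symmetry into a quantitative decay of $m_\eps(x_1)$ as $x_1\to\infty$ via a Markov-type inequality. Setting $h(y_1):=\int_0^\infty\omg(y_1,y_2)\,dy_2$, the Steiner condition makes $h$ even in $y_1$ and non-increasing in $|y_1|$; monotonicity then yields
\[
    h(y_1)\;\le\;\f{1}{y_1}\int_0^{y_1}h(s)\,ds\;\le\;\f{\nrm{\omg}_1}{2y_1}\qquad(y_1>0).
\]
For $x_1\ge 2\eps$, the strip $[x_1-\eps,x_1+\eps]$ lies in $\{y_1\ge x_1/2>0\}$, so
\[
    m_\eps(x_1)\;\le\;2\eps\, h(x_1-\eps)\;\le\;\f{2\eps\,\nrm{\omg}_1}{x_1}\;\lesssim\;\f{M}{x_1}.
\]
Combining this with the lower bound gives $c_1 M^{2/3}\lesssim M/x_1$, hence $x_1\lesssim M^{1/3}$. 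The remaining case $0\le x_1\le 2\eps=2AM^{1/3}$ is immediate, and the symmetric argument for $x_1\le 0$ closes the estimate on $|x_1|$.

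The main obstacle is calibrating $\eps$ so that both halves of the argument work simultaneously: $\eps$ must be large enough (in units of $M^{1/3}$) to render the $\eps^{-1}\nrm{\omg}_1$ term in \eqref{eq_est of gw/x_2} negligible against $W\sim M^{1/3}$, yet the subsequent Markov-type bound gives a truly decaying upper envelope on $m_\eps(x_1)$ so that it crosses below $c_1 M^{2/3}$ once $x_1$ exceeds a constant multiple of $M^{1/3}$. Once the absolute constants from \eqref{eq_est of gw/x_2}, \eqref{eq_small impulse-mass}, and \eqref{eq_small impulse-W} are tracked carefully, the choice $\eps=AM^{1/3}$ for any sufficiently large absolute $A$ closes the argument.
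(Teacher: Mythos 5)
Your proposal is correct and follows essentially the same route as the paper's proof: both combine the pointwise estimate \eqref{eq_est of gw/x_2} (with $\eps\sim M^{1/3}$, calibrated via \eqref{eq_small impulse-mass} and \eqref{eq_small impulse-W} so that the $\eps^{-1}\nrm{\omg}_1$ term is a definite fraction of $W$) with the Markov-type consequence of Steiner symmetry, $y_1\,\omg(y_1,y_2)\le\int_0^\ift\omg(s,y_2)\,ds$, to force the local mass $\nrm{\omg}_{L^1([x_1-\eps,x_1+\eps]\times(0,\ift))}$ to decay like $M/x_1$. The only cosmetic difference is that you phrase the conclusion as a two-sided bound on that local mass while the paper directly shows $|\calG[\omg](\bfx)/x_2|<W$ once $x_1\gtrsim M^{1/3}$, excluding such points from the support.
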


\begin{proof}\q Let $M\in(0,M_1)$. As $S_M$ is nonmepty, we choose any $\omg=\bfone_{\{\calG[\omg]-Wx_2 >0\}}\in S_M$ with the constant $W=W(\omg)>0$ in Theorem \ref{thm_maximizer}. We assume that $\omg$ satisfies the Steiner symmetry condition. First, we recall the relation $$ \sup_{\bfx\in\supp{(\omg)}}{x_2}\q\leq\q\f{\nrm{\calG[\omg]}_{L^\ift}}{W}\q\lesssim\q M^{1/3}$$ which was obtained by the inequality \eqref{eq_small impulse : height of supp w} in the proof of Lemma \ref{lem_ests for small impulse}. It remains to show that 
$$ \sup_{\bfx\in\supp{(\omg)}}{|x_1|}\q\lesssim\q M^{1/3}.$$ We first observe that  
$$\supp{(\omg)}\q\subset\q
\overline{\Bigg\{\q\f{\calG[\omg](\bfx)}{x_2}>W\q\Bigg\}}$$
and recall the  Lemma \ref{lem_w:cpt_supp} to have 
$$\Bigg|\f{\calG[\omg](\bfx)}{x_2}\Bigg|\q\leq\q C_0\q\Big( \q\eps^{-1}\nrm{\omg}_1
+\eps^{1/3}\nrm{\omg}_{L^1\big([x_1-\eps, x_1+\eps]\times(0,\ift)\big)}^{1/3}\q\Big)$$ which holds for any $\bfx\in\bbR^2_+$ and $\eps>0$. Here $C_0>0$ is an absolute constant. We fix $\eps = 3C_0\nrm{\omg}_1/W$ to obtain 
$$\Bigg|\f{\calG[\omg](\bfx)}{x_2}\Bigg|\q\leq\q \f{W}{3}+C_0\eps^{1/3}
\nrm{\omg}_{L^1\big([x_1-\eps, x_1+\eps]\times(0,\ift)\big)}^{1/3}.$$
Suppose $x_1>\eps$ and observe that 
$$\nrm{\omg}_{L^1\big([x_1-\eps, x_1+\eps]\times(0,\ift)\big)}=\int_0^\ift\Bigg[\int_{x_1-\eps}^{x_1+\eps}\omg(t,x_2)\q dt\Bigg]dx_2\q\leq\q \f{1}{2}\nrm{\omg}_1\cdot\int_{x_1-\eps}^{x_1+\eps}\f{1}{t}dt
\q=\q\f{1}{2}\nrm{\omg}_1\ln\Bigg[\f{x_1+\eps}{x_1-\eps}\Bigg]
$$ where we have used the relation 
$$t\cdot\omg(t,x_2)\leq\int_0^\ift\omg(s,x_2)\q ds,\qd\mbox{ for any } t,\q x_2>0$$ which follows from the Steiner symmetry condition on $\omg$.  If we additionally assume $x_1>2\eps$, we have 
$$\nrm{\omg}_{L^1\big([x_1-\eps, x_1+\eps]\times(0,\ift)\big)}\q\leq\q 2\eps\q x_1^{-1}\nrm{\omg}_1$$ and therefore
$$\Bigg|\f{\calG[\omg](\bfx)}{x_2}\Bigg|\q\leq\q \f{W}{3}+C_1\eps^{2/3}\nrm{\omg}_1^{1/3}x_1^{-1/3}$$ for the constant $C_1:=2^{1/3}C_0$. If we suppose that $$x_1>C_2\f{\eps^2}{W^3}\nrm{\omg}_1\qd\mbox{ where }\q C_2:= (3C_1)^3,$$ we get 
$$\Bigg|\f{\calG[\omg](\bfx)}{x_2}\Bigg|\q\leq\q \f{2}{3}W,$$ and therefore $\bfx\in (\supp(\omg))^c$. In sum, we have the following, 
$$\supp(\omg)\q\subset \Bigg\{\q\bfx\in\bbR^2_+\q:\q |x_1| > \max\Big\{\q2\eps,\q C_2\f{\eps^2}{W^3}\nrm{\omg}_1 \q\Big\}\Bigg\}^c,\qd \mbox{ where }\q \eps=3C_0\nrm{\omg}_1/W.$$ 
Using the estimates \eqref{eq_small impulse-mass} and \eqref{eq_small impulse-W} in Lemma \ref{lem_ests for small impulse}, we get 
$$\eps\lesssim M^{1/3}\q\mbox{ and so }\q \f{\eps^2}{W^3}\nrm{\omg}_1\lesssim M^{1/3}.$$ 
This implies that 
$$ \sup_{\bfx\in\supp{(\omg)}}{|x_1|}\q\lesssim\q M^{1/3}.$$

\end{proof}

\section{ Variational problem on exact impulse without mass bound}\label{sec_ no mass bound}\q

\subsection{Variational problem without mass bound}\q

For $M>0$, we then consider the following admissible set without mass bound,  
   $$ K_{M,\ift}\q:=\q\Bigl\{\q\omg\in L^1 (\bbR_+^2)\q:\q\omg=\bfone_A\qd \mbox{a.e.}\qd\mbox{where }A\subset\bbR_+^2 \mbox{ is (Lebesgue) measurable, } 
   \qd\nrm{x_2\omg}_1=M \q\Bigl\},$$ with the maximal energy  
   $$  I_{M,\ift}\q:=\sup_{\omg\q\in\q K_{M,\ift}}E(\omg)$$ and the set of maximizers $$S_{M,\ift}:=\Big\{\q\omg\in K_{M,\ift}\q:\q E(\omg)=I_{M,\ift}\q\Big\}. $$
 Observe that this new admissible set $K_{M,\ift}$ does not contain any upper bound condition for mass. We note that the elementary estimate \eqref{eq_energy:finite} in Lemma \ref{lem_gw:estiate}: 
 $$\Bigl|\int_{\bbR_+^2}\int_{\bbR_+^2}G(\bfx,\bfy)\omg(\bfx)\omg(\bfy)\q d\bfx d\bfy\Bigl|\q\lesssim\q
\q\nrm{\omg}_1\nrm{\omg}_2^{1/2}\nrm{x_2\omg}_1^{1/2}$$
does not guarantee that $I_{M,\ift}<\ift$. In this section, we will prove that $I_{M,\ift}<\ift$ and $S_{M,\ift}\neq\emptyset$, which is quite surprising. Moreover, this new problem inherits all results from the previous variational problems. The main result of this section is given in Theorem \ref{thm_ no mass cond} below, and this section is dedicated to its proof. \\

\begin{thm}\label{thm_ no mass cond} We have the following properties: \\

\noindent(i) \q $I_{1,\ift}<\ift$.\\

\noindent(ii) \q $S_{1,\ift}$ is nonempty. \\

\noindent (iii) \q Each $\omg\in S_{1,\ift}$ satisfies the relation 
$$\omg\q=\q \bfone_{\{\q\calG[\omg]-Wx_2>0\q\}}\qd \mbox{ a.e. 
\qd in }\q \bbR^2_+,\qd\mbox{ for the traveling speed }\q 
W:=\f{4}{3}\q I_{1,\ift}. $$ 

\noindent(iv) \q Each $\omg \in S_{1,\ift}$ has a bounded support and satisfies the Steiner symmetry condition in Definition \ref{def_Steiner sym} up to translation. More precisely, for each $\omg\in S_{1,\ift}$, there exist $\tau\in\bbR$ and an open bounded set $A\subset\bbR^2_+$ satisfying the Steiner symmetry condition such that, for the translation $\omg_\tau$ given by $\omg_\tau(\cdot):=\omg(\cdot+(\tau,0))$, we have $$\omg_\tau\q=\q\bfone_A\qd\mbox{ a.e. in }\q\bbR^2_+.$$ 
\noindent(v) \q There exists an absolute constant $C\geq1$ such that, for each $\omg\in S_{1,\ift}$, we have 
$$\sup_{\bfx\in\supp{(\omg)}}|\bfx|\q\leq\q C\qd\mbox{ and }\qd\f{1}{C}\q\leq\q\nrm{\omg}_1\q\leq\q C.$$
\end{thm}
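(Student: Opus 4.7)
The plan is to show that for a sufficiently large mass cap $\nu_1$, the set $S_{1,\nu_1,1}$ coincides with $S_{1,\ift}$, so that every conclusion of the theorem follows from the corresponding results already established in Sections \ref{sec:exist-maximizer}--\ref{sec:small-impulse}. The decisive ingredient is the estimate \eqref{eq_small impulse-mass} with its sharp exponent $2/3$: only this exponent makes the uniform mass bound survive the scaling to $M = \nu^{-3/2}$.

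First, I would fix the constants $M_1$ from Lemma \ref{lem_small impulse to gmm 0} and $C_0$ from \eqref{eq_small impulse-mass}, and choose $\nu_1 > \max\{C_0, M_1^{-2/3}\}$ so that $\nu_1^{-3/2} < M_1$. For each $\nu \geq \nu_1$, Remark \ref{rmk_prob.congruent} provides a bijection $\omg \mapsto \hat\omg$, defined by $\hat\omg(\bfx) = \omg(\nu^{1/2}\bfx)$, from $S_{1,\nu,1}$ onto $S_{\nu^{-3/2}}$, satisfying $\nrm{\omg}_1 = \nu\nrm{\hat\omg}_1$. Since $\nu^{-3/2} < M_1$, \eqref{eq_small impulse-mass} yields $\nrm{\hat\omg}_1 \leq C_0 \nu^{-1}$, whence $\nrm{\omg}_1 \leq C_0 < \nu_1$. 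Therefore $\omg \in K_{1,\nu_1,1}$, and combined with $K_{1,\nu_1,1} \subset K_{1,\nu,1}$ this forces $S_{1,\nu,1} = S_{1,\nu_1,1}$ and $I_{1,\nu,1} = I_{1,\nu_1,1}$ for all $\nu \geq \nu_1$.

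Next, for any $\omg \in K_{1,\ift}$, setting $\nu := \max(\nrm{\omg}_1, \nu_1)$ gives $\omg \in K_{1,\nu,1}$ and $E(\omg) \leq I_{1,\nu,1} = I_{1,\nu_1,1}$, so $I_{1,\ift} \leq I_{1,\nu_1,1} < \infty$; this is (i). Any $\omg \in S_{1,\nu_1,1}$ then realises $I_{1,\ift}$ within $K_{1,\ift}$, proving (ii) and $I_{1,\ift} = I_{1,\nu_1,1}$. Conversely, for $\omg \in S_{1,\ift}$ the same choice of $\nu$ gives $\omg \in S_{1,\nu,1} = S_{1,\nu_1,1}$, so $S_{1,\ift} = S_{1,\nu_1,1}$. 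Corollary \ref{cor_maximizer} now delivers (iv) and the patch structure $\omg = \bfone_{\{\calG[\omg] - Wx_2 - \gmm > 0\}}$. The scaled $\hat\omg \in S_{\nu_1^{-3/2}}$ satisfies $\hat\gmm = 0$ by Lemma \ref{lem_small impulse to gmm 0}, and the scaling $\gmm = \nu_1 \hat\gmm$ of Remark \ref{rmk_prob.congruent} yields $\gmm = 0$. Plugging $\gmm = 0$ into the Pohozaev identity \eqref{eq_(general)energy and W} gives $W = (4/3) I_{1,\ift}$, which is the same constant for every $\omg \in S_{1,\ift}$, proving (iii).

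Finally, the size bounds in (v) follow by scaling: the relation $\nrm{\omg}_1 = \nu_1 \nrm{\hat\omg}_1$ together with the two-sided estimate \eqref{eq_small impulse-mass} gives $\nrm{\omg}_1 \in [C_0^{-1}, C_0]$, and Lemma \ref{lem_size of w} applied to the Steiner-symmetric translate of $\hat\omg$ supplied by (iv) bounds $\sup_{\bfx \in \supp(\hat\omg)} |\bfx| \leq C\nu_1^{-1/2}$, which scales up to $\sup_{\bfx \in \supp(\omg)} |\bfx| \leq C$; the powers of $\nu_1$ cancel exactly in both cases. The anticipated main obstacle of the argument is precisely the sharpness of the exponent $2/3$ in \eqref{eq_small impulse-mass}. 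Its proof in Lemma \ref{lem_ests for small impulse} already relied on a self-improving interplay between the Pohozaev identity $I_M = (3/4)WM$, the kernel bound of Lemma \ref{lem_est.of.G}, and the energy lower bound $I_M \gtrsim M^{4/3}$ of Lemma \ref{lem_lower bound of energy}; any weaker exponent would destroy the uniform-in-$\nu$ mass control on which the present argument rests.
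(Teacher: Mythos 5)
Your proposal is correct and follows essentially the same route as the paper: both establish a uniform mass bound $\nrm{\omg}_1\le C_0$ for maximizers in $S_{1,\nu,1}$ via the scaling to $S_{\nu^{-3/2}}$ and the sharp exponent $2/3$ in \eqref{eq_small impulse-mass}, conclude $S_{1,\ift}=S_{1,\nu_1,1}$ for a fixed large $\nu_1$, and then inherit (iii)--(v) from Corollary \ref{cor_maximizer}, the Poho\v{z}aev identity, and Lemmas \ref{lem_ests for small impulse} and \ref{lem_size of w}. The only (immaterial) differences are that the paper packages the scaling step as Lemma \ref{lem_abundant mass} and Corollary \ref{cor_abundant mass}, and obtains $\gmm=0$ from the strict mass deficit $\nrm{\omg}_1\le\nu<2\nu$ rather than from Lemma \ref{lem_small impulse to gmm 0} applied to the rescaled maximizer.
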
\q

\begin{rmk}
    The key idea of the proof of Theorem \ref{thm_ no mass cond} is to show that $$S_{1,\ift}=S_{1,\nu,1}\qd\mbox{ for sufficiently large }\q \nu>0.$$  
\end{rmk}\q

\begin{rmk} 
    In the statements (i)-(iv) of Theorem \ref{thm_ no mass cond}, we can replace $I_{1,\ift},\q S_{1,\ift}$ with $I_{M,\ift},\q S_{M,\ift}$ for each $M>0$ by the scaling: The scaling map $\phi:\omg\mapsto\hat{\omg}$ given by 
$$ \hat{\omg}(\bfx)=\omg(M^{-1/3}\bfx) $$ maps the set $S_{1,\ift}$  to the set $S_{M,\ift}$ bijectively, with the scaled traveling speed and the maximal energy given by $$W_M:=W\cdot M^{1/3}\qd\mbox{ and }\qd I_{M,\ift}=I_{1,\ift}\cdot M^{4/3}.$$
\end{rmk}\q

\subsection{Scaling property}\q

Recall the estimate \eqref{eq_small impulse-mass} in Lemma \ref{lem_ests for small impulse}, which gives 
$$\nrm{\omg}_1\lesssim M^{2/3}.$$
As the impulse $M$ is small enough, each maximizer cannot achieve the full mass (which is 1) to attain the maximal energy. Using this property, we can show the following two lemmas.\\

\begin{lem}\label{lem_abundant mass}
    There exists a constant $M_2>0$ satisfying the following: \\
    
    For each $M\in(0,M_2)$ and any $\alp>1$, we have 
    $$S_M\q=\q S_{M,\alp,1}.$$
\end{lem}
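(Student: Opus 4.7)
The goal is to compare the maximizer set $S_M = S_{M,1,1}$ with $S_{M,\alpha,1}$, where the only difference is the mass bound ($1$ versus $\alpha > 1$). My plan relies on the scaling argument of Remark \ref{rmk_prop.of.G}(v) combined with the mass estimate \eqref{eq_small impulse-mass} of Lemma \ref{lem_ests for small impulse}, which gives $\|\omega\|_1 \lesssim M^{2/3}$ for any maximizer in $S_{M,1,1}$ when $M$ is small. Since $M^{2/3} \to 0$ as $M \to 0$, intuitively no maximizer should ever need to use the full mass budget $\alpha \geq 1$, so raising the cap from $1$ to $\alpha$ should change nothing.

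First, I would reduce $S_{M,\alpha,1}$ to a single-parameter problem. By Remark \ref{rmk_prob.congruent}, the map $\omega(\bfx) \mapsto \hat\omega(\bfx) = \omega(\alpha^{1/2} \bfx)$ is a bijection from $S_{M,\alpha,1}$ onto $S_{M'}$ where $M' := \alpha^{-3/2} M$. Choosing $M_2 \le M_1$ (with $M_1$ from Lemma \ref{lem_small impulse to gmm 0}) and $M < M_2$ ensures $M' \le M < M_1$, so \eqref{eq_small impulse-mass} applies to $\hat\omega$ and yields $\|\hat\omega\|_1 \le C (M')^{2/3}$. Converting back by the scaling,
\begin{equation*}
    \|\omega\|_1 \;=\; \alpha\,\|\hat\omega\|_1 \;\le\; C\,\alpha\,(\alpha^{-3/2}M)^{2/3} \;=\; C\,M^{2/3},
\end{equation*}
which is remarkably \emph{independent of $\alpha$}. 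Choosing $M_2$ additionally small enough that $C M_2^{2/3} < 1$, I obtain $\|\omega\|_1 < 1$ for every $\omega \in S_{M,\alpha,1}$.

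From here the two inclusions follow essentially by formal considerations. Since $\alpha > 1$, trivially $K_{M,1,1} \subset K_{M,\alpha,1}$, hence $I_{M,1,1} \le I_{M,\alpha,1}$. Conversely, the maximizer $\omega \in S_{M,\alpha,1}$ just produced satisfies $\|\omega\|_1 < 1$, so $\omega \in K_{M,1,1}$, giving $I_{M,\alpha,1} = E(\omega) \le I_{M,1,1}$. Therefore $I_{M,1,1} = I_{M,\alpha,1}$. Then any $\omega \in S_{M,1,1}$ lies in $K_{M,\alpha,1}$ and achieves the energy $I_{M,\alpha,1}$, so $\omega \in S_{M,\alpha,1}$; and any $\omega \in S_{M,\alpha,1}$ was just shown to lie in $K_{M,1,1}$ and to achieve $I_{M,1,1}$, so $\omega \in S_{M,1,1}$.

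There is no serious obstacle here; the argument is essentially bookkeeping once one notices the crucial cancellation $\alpha \cdot \alpha^{-1} = 1$ in the scaled mass bound, which is what makes the threshold $M_2$ uniform in $\alpha$. The only point requiring care is verifying that Lemma \ref{lem_ests for small impulse} is indeed applicable to the rescaled problem (i.e.\ that $M' < M_1$ uniformly in $\alpha \geq 1$), which is automatic since $\alpha^{-3/2} \le 1$.
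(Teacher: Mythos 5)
Your proof is correct and follows essentially the same route as the paper: rescale any $\omega\in S_{M,\alpha,1}$ via $\hat\omega(\bfx)=\omega(\alpha^{1/2}\bfx)$ into $S_{\alpha^{-3/2}M}$, apply the mass bound \eqref{eq_small impulse-mass} there, and undo the scaling to get $\|\omega\|_1\le C M^{2/3}<1$ uniformly in $\alpha$, after which the equality of the maximizer sets is bookkeeping. The only cosmetic difference is that you spell out the intermediate equality $I_{M,1,1}=I_{M,\alpha,1}$, which the paper leaves implicit.
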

\begin{proof}\q Denote the constant $C>1$ in Lemma \ref{lem_ests for small impulse} as $C_0>1$. Choose any small constant $M_2\in(0,1)$ satisfying that $$M_2\leq M_1\q\mbox{ and }\q C_0\cdot M_2^{2/3}\leq1.$$
Let $M\in(0,M_2)$  and  $\alp>1$. As $S_{M,\alp,1}$ is nonempty, choose any $\omg\in S_{M,\alp,1}$ and consider the scaling map
$$\hat{\omg}(\bfx)=\omg(\alp^{1/2}\bfx)$$ to attain 
$$\nrm{\hat{\omg}}_1=\alp^{-1}\nrm{\omg}_1\leq1,\qd 
\nrm{x_2\hat{\omg}}_1=\alp^{-3/2}\nrm{x_2\omg}_1=\alp^{-3/2}M$$
and so $\hat{\omg}\in K_{\alp^{-3/2}M}$. By the bijectivity of the scaling map from $S_{M,\alp,1}$ onto $S_{\alp^{-3/2}M},\q$ we get $\q\hat{\omg}\in S_{\alp^{-3/2}M}$. Here, as $\q\alp^{-3/2}M<M_1$, we can use the estimate \eqref{eq_small impulse-mass} in Lemma \ref{lem_ests for small impulse} to have 
$$\nrm{\hat{\omg}}_1\q\leq\q C_0\cdot(\alp^{-3/2}M)^{2/3},$$ and therefore 
$$\nrm{\omg}_1\q\leq\q C_0\cdot M^{2/3}<1.$$
This implies that $\omg\in K_M$. Using the assumption that $\omg\in S_{M,\alp,1}$ and the fact that $K_M\subset K_{M,\alp,1}$, we obtain $\omg\in S_M$. In sum, we have $S_{M,\alp,1}\subset S_M$, which leads to $S_{M,\alp,1}= S_M$.\end{proof}

\begin{cor}\label{cor_abundant mass}
Let $M_2>0$ be the constant in Lemma  \ref{lem_abundant mass}. Then for any $M,\q v>0$ satisfying $M\nu^{-3/2}<M_2$, we have 
$$S_{M,\nu,1}\q=\q S_{M,\alp,1},\qd \mbox{ for all }\alp>\nu.$$ 
\end{cor}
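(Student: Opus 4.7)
The plan is to reduce the corollary directly to Lemma \ref{lem_abundant mass} via the scaling of Remark \ref{rmk_prob.congruent}. Fix $M,\nu>0$ with $M':=M\nu^{-3/2}<M_2$, and introduce the dilation $T_\nu\omg(\bfx):=\omg(\nu^{1/2}\bfx)$. A direct change of variables, combined with the scale invariance $G(r\bfx,r\bfy)=G(\bfx,\bfy)$ from Remark \ref{rmk_prop.of.G}(iv), shows that for every $\beta>0$, the map $T_\nu$ is a bijection $K_{M,\beta,1}\to K_{M',\beta/\nu,1}$ satisfying $E(T_\nu\omg)=\nu^{-2}E(\omg)$. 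Since a positive rescaling of the energy preserves maximizers, $T_\nu$ restricts to bijections
$$
T_\nu:\q S_{M,\nu,1}\q\longrightarrow\q S_{M',1,1}\q=\q S_{M'}
\qqd\mbox{and}\qqd
T_\nu:\q S_{M,\alpha,1}\q\longrightarrow\q S_{M',\alpha/\nu,1}
$$
for every $\alpha>\nu$ (so that $\alpha/\nu>1$).

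Since $M'<M_2$, Lemma \ref{lem_abundant mass}, applied with parameter $M'$ and with $\alpha/\nu$ playing the role of $\alpha$, gives $S_{M'}=S_{M',\alpha/\nu,1}$. Applying the inverse dilation $T_{1/\nu}$ to this identity yields at once $S_{M,\nu,1}=S_{M,\alpha,1}$, which is the desired conclusion.

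The argument is essentially mechanical once Lemma \ref{lem_abundant mass} is in hand; the only points to verify are the transformation rules for impulse, mass bound, and energy under $T_\nu$, each of which is a direct computation. The heart of the matter—that maximizers in $K_{M',1,1}$ have strictly sub-unit mass, so that the mass constraint is inactive—is already encapsulated in Lemma \ref{lem_abundant mass}; the corollary merely records this phenomenon after rescaling back to the original $(\mu,\nu,\lmb)$-normalization, and there is no genuine obstacle beyond correctly tracking the parameters under the dilation.
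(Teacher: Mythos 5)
Your proof is correct and follows essentially the same route as the paper: the identical dilation $\hat{\omg}(\bfx)=\omg(\nu^{1/2}\bfx)$ is used to carry $S_{M,\nu,1}$ onto $S_{M\nu^{-3/2}}$ and $S_{M,\alp,1}$ onto $S_{M\nu^{-3/2},\,\alp/\nu,\,1}$, after which Lemma \ref{lem_abundant mass} applies since $M\nu^{-3/2}<M_2$ and $\alp/\nu>1$. The transformation rules for impulse, mass, and energy that you cite all check out against Remark \ref{rmk_prop.of.G}(iv)--(v).
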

\begin{proof}\q Let $M,\q v,\q \alp>0$ and assume that $M\nu^{-3/2}<M_2$ and $\alp>\nu$. Note that the scaling map $\phi: \omg\mapsto\hat{\omg}$ given by $$\hat{\omg}(\bfx)=\omg(\nu^{1/2}\bfx)$$ maps the set $S_{M,\nu,1}$ onto the set $S_{v^{-3/2}M}$ bijectively and the set $S_{M,\alp,1}$ onto the set $S_{v^{-3/2}M,\q \alp/\nu,\q 1}$ bijectively. As  $\q M\nu^{-3/2}<M_2$ and $\alp/\nu>1$, by Lemma \ref{lem_abundant mass} we have 
    $$S_{v^{-3/2}M}\q=\q S_{v^{-3/2}M,\q \alp/\nu,\q 1},$$ and therefore 
    $$S_{M,\nu,1}\q=\q S_{M,\alp,1}.$$
\end{proof}\q

\subsection{Proof of Theorem \ref{thm_ no mass cond}}\q

\begin{proof}[Proof of Theorem \ref{thm_ no mass cond}]\q We first prove (i),(ii) by the following claim. \\

\noindent\textit{(Claim)} There exists a constant $\nu\in(0,\ift)$ such that we have
$$I_{1,\ift}=I_{1,\nu,1}=I_{1,\alp,1}\qd\mbox{ and }\qd S_{1,\ift}=S_{1,\nu,1}=S_{1,\alp,1}\qd\mbox{ for each }\alp>\nu.$$ 
First, observe that 
    $$K_{1,\ift}\q=\q \bigcup_{\tau>0}K_{1,\tau,1}.$$ Then we have 
    $$I_{1,\ift}\q=\q\sup\q  \Big\{\q E(\omg)\q:\q \omg \in \bigcup_{\tau>0}K_{1,\tau,1}\q \Big\}\q=\q 
    \sup\q  \Bigg[\bigcup_{\tau>0}\Big\{\q E(\omg)\q:\q \omg \in K_{1,\tau,1}\q \Big\}\Bigg]\q\leq\q 
    \limsup_{\tau\to\ift}{\q I_{1,\tau,1}}. $$ For the constant $M_2>0$ in Lemma \ref{lem_abundant mass}, we fix any $v>0$ to satisfy $\nu^{-3/2}<M_2$. Then by Corollary \ref{cor_abundant mass},  we get $$I_{1,\nu,1}=I_{1,\alp,1}\qd\mbox{ and }\qd S_{1,\nu,1}=S_{1,\alp,1}$$ for any $\alp>\nu$. So we obtain
    $$I_{1,\ift}\leq I_{1,\nu,1}<\ift.$$  Moreover, observe that $I_{1,\nu,1}\leq I_{1,\ift}$ by the relation $K_{1,\nu,1}\subset K_{1,\ift}.$ So we get 
    $$I_{1,\ift}\q=\q I_{1,\nu,1}\q=\q I_{1,\alp,1}\qd\mbox{ for each }\alp>\nu.$$ Using this, we observe that $S_{1,\nu,1}\subset S_{1,\ift}$, with relation $S_{1,\nu,1}=S_{1,\alp,1}$ for each $\alp>\nu$. In addition, we can show $S_{1,\nu,1}\supset S_{1,\ift}$ in the following way. We choose any  $\omg \in S_{1,\ift}$ and put $\alp':=\max \{\nu,\q \nrm{\omg}_1\}$ to get 
    $$\omg\in K_{1,\alp',1}\qd\mbox{ and }\qd E(\omg)=I_{1,\ift}=I_{1,\nu,1}=I_{1,\alp',1},$$  so we obtain $\omg \in S_{1,\alp',1}.$ By the relation $S_{1,\nu,1}=S_{1,\alp',1}$ due to $\alp'>\nu$,  we have $\omg \in S_{1,\nu,1}.$ Therefore we obtain 
    $$S_{1,\ift}\q=\q S_{1,\nu,1}=S_{1,\alp,1}\qd\mbox{ for each }\alp>\nu.$$ It completes the proof of the claim.\\

    Note that the above claim implies that $I_{1,\ift}<\ift$ and the set $S_{1,\ift}$ is nonempty. By Corollary \ref{cor_maximizer}, for each $\omg\in S_{1,\ift}$ there exists $W=W(\omg)>0$ and $\gmm=\gmm(\omg)\geq0$ satisfying that $$\omg=\bfone_{\set{\q\calG[\omg]-Wx_2-\gmm>0\q}},$$ where we have $\gmm=0$ due to the relations $\omg\in S_{1,\ift}=S_{1,\nu,1}=S_{1,2\nu,1}$ and $\nrm{\omg}_1\leq\nu<2\nu$. Moreover, Corollary \ref{cor_maximizer} guarantees that $\omg$ may be assumed to satisfy the Steiner symmetry condition up to translation. By Corollary \ref{cor_WM=energy}, we obtain the relation $$I_{1,\ift}=\f{3}{4}W,$$ which implies that the traveling speed is unique in the class $S_{1,\ift}$. Moreover, for the scaled maximizer $\hat{\omg}\in S_{\nu^{-3/2}}$, we can apply Lemma \ref{lem_ests for small impulse} and Lemma \ref{lem_size of w} to obtain that $$\f{1}{C_0}
    \Big(\nu^{-3/2}\Big)^{2/3}\q\leq\q\nrm{\hat{\omg}}_1\q\leq\q
    C_0\Big(\nu^{-3/2}\Big)^{2/3}\qd\mbox{ and }\sup_{\bfx\in\supp{({\hat{\omg}})}}{|\bfx|}\q\leq\q C_1\cdot \Big(\nu^{-3/2}\Big)^{1/3}$$ for the absolute constants $C_0>1,\q C_1>0$ in Lemma \ref{lem_ests for small impulse} and Lemma \ref{lem_size of w} each. Therefore we have 
$$ \f{1}{C_0}
\q\leq\q\nrm{\omg}_1\q\leq\q C_0 \qd\mbox{ and } \sup_{\bfx\in\supp{{(\omg)}}}{|\bfx|}\q\leq\q C_1.$$ Note that both inequalities hold for each $\omg\in S_{1,\ift}$. It completes the proof of Theorem \ref{thm_ no mass cond}. \end{proof}

\q \section*{Acknowledgments}

{We thank the anonymous referees for various comments which have improved the manuscript.}
KC has been supported by the National Research Foundation of Korea(NRF) grant funded by the Korea government(MSIT)(grant No. 2022R1A4A1032094, RS-2023-00274499). IJ has supported by the National Research Foundation of Korea(NRF) grant
funded by the Korea government(MSIT) (No. 2022R1C1C1011051, RS-2024-00406821).

When we were finalizing the manuscript, we noted the preprint by Huang--Tong {\cite{HuangTong}} which obtained the existence of a Sadovskii vortex patch using a fixed-point approach. It is not clear to us whether their vortex patch coincides with the one in Theorem \ref{Main thm 1}. They have a better description of the patch boundary while our variational construction of the Sadovskii vortex as the kinetic maximizer is natural in view of its observed dynamical stability.


\bibliographystyle{plain}

\end{document}